\documentclass[11pt,reqno]{amsart}

\usepackage{amsfonts, amsthm, amsmath}
\allowdisplaybreaks[4]

\usepackage{rotating}

\usepackage{tikz}
\usetikzlibrary{arrows.meta,positioning}

\usepackage{graphics}

\usepackage{amssymb}

\usepackage{mathtools}

\usepackage{booktabs}

\usepackage{circledsteps}

\usepackage{amscd}

\usepackage[latin2]{inputenc}

\usepackage{t1enc}

\usepackage[mathscr]{eucal}

\usepackage{indentfirst}

\usepackage{graphicx}

\usepackage{pict2e}

\usepackage{mathrsfs}

\usepackage{enumerate}

\usepackage[pagebackref]{hyperref}
\hypersetup{colorlinks=true}
\usepackage{color}
\usepackage{epic}
\usepackage{framed}

\numberwithin{equation}{section}
\theoremstyle{plain}

\newtheorem{theorem}{Theorem}[section]

\newtheorem{lemma}[theorem]{Lemma}

\newtheorem{corollary}[theorem]{Corollary}

\theoremstyle{definition}

\newtheorem{Def}[theorem]{Definition}

\newtheorem{example}[theorem]{Example}

\newtheorem{remark}[theorem]{Remark}

\newtheorem{?}[theorem]{Problem}

\newtheoremstyle{named}{}{}{\itshape}{}{\bfseries}{.}{.5em}{#1\thmnote{ #3}}
\theoremstyle{named}

\newcommand{\orcid}[1]{\href{https://orcid.org/#1}{\textsuperscript{\textregistered}}}

\newcommand{\f}[1]{\ifthenelse{\equal{#1}{1}}{(q;q)_\infty}{(q^{#1};q^{#1})_{\infty}}}

\def\bN{\mathbb{N}}

\def\sP{\mathscr{P}}
\def\sD{\mathscr{D}}
\def\sA{\mathscr{A}}
\def\sB{\mathscr{B}}
\def\sR{\mathscr{R}}
\def\o{\mathrm{o}}

\def\sE{\mathscr{E}}

\def\sn{\mathrm{sn}}
\def\pr{\mathrm{pr}}

\def\ri{\rightarrow}
\def\xri{\xrightarrow}
\def\la{\lambda}
\def\al{\alpha}
\def\ep{\epsilon}
\def\be{\beta}
\def\si{\sigma}
\def\ga{\gamma}

\def\hla{\hat{\lambda}}
\def\hmu{\hat{\mu}}
\def\I{\mathrm{I}}
\def\II{\mathrm{II}}
\def\III{\mathrm{III}}
\def\IV{\mathrm{IV}}
\def\V{\mathrm{V}}
\def\VI{\mathrm{VI}}
\def\VII{\mathrm{VII}}

\begin{document}
\title[Combinatorial decomp. of SP and multi-sum R-R type identities]{Combinatorial decompositions of strict partitions and multi-sum Rogers-Ramanujan type identities}

\author[H. Li]{Haijun Li
}
\address[Haijun Li]{College of Mathematics and Statistics, Chongqing University, Chongqing 401331, P.R. China}
\email{lihaijun@cqu.edu.cn; lihaijune@163.com}

\date{\today}

\begin{abstract}
While multi-sum Rogers-Ramanujan type identities have been extensively investigated through Bailey pairs, the constant term method, complex contour integration, and related techniques, many still lack an intrinsic combinatorial understanding. In this paper, we establish combinatorial interpretations for several families of triple-sum identities via structural decompositions of strict partitions. For one of these families, we extend the underlying combinatorial framework to obtain a general $(m+1)$-fold multi-sum identity. Leveraging these partition-theoretic interpretations, we construct explicit bijective proofs for all the corresponding identities. As an application, this constructive approach naturally yields some parameterized generalizations.
\end{abstract}

\keywords{Rogers-Ramanujan type identity, $q$-series, integer partition, base partition, bijective combinatorics. 
\newline \indent 2020 {\it Mathematics Subject Classification}. 05A17, 05A19, 11P84.}

\maketitle
\section{Introduction}\label{sec:intro}

Throughout this paper, we adopt the standard $q$-series notation (see, for example,~Gasper and Rahman~\cite{GR04}):
\begin{align*}
&(a; q)_n := \prod_{k=0}^{n-1} (1 - aq^k),\quad  (a; q)_\infty := \prod_{k=0}^{\infty} (1 - aq^k), \quad |q| < 1,\\
&(a_1, \ldots, a_m; q)_n := (a_1; q)_n \cdots (a_m; q)_n, \qquad n \in \mathbb{N}_0 \cup \{\infty\},
\end{align*}
where $\mathbb{N}$ denotes the set of positive integers and $\mathbb{N}_0 := \mathbb{N} \cup \{0\}$.

Rogers-Ramanujan type identities are sum-to-product identities expressing a $q$-hypergeometric series, often in the form of a multisum, as an infinite product (or a linear combination of infinite products). The prototypical examples are the celebrated Rogers-Ramanujan identities:
\begin{align}
\sum_{n\geq 0}\frac{q^{n^2}}{(q; q)_n}=\frac{1}{(q, q^4; q^5)_{\infty}},\quad \sum_{n\geq 0}\frac{q^{n^2+n}}{(q; q)_n}=\frac{1}{(q^2, q^3; q^5)_{\infty}}.\label{id:RR}
\end{align}
The identities in \eqref{id:RR} were first discovered and proved by Rogers \cite{rog94}, subsequently rediscovered by Ramanujan \cite{ram14, ram19}, and independently established by Schur \cite{sch17}. Rogers-Ramanujan type identities play a fundamental role across combinatorics, number theory, the representation theory of affine Lie algebras, and solvable lattice models in statistical mechanics; for a comprehensive exposition, see Sills' monograph \cite{sil18}. A monumental collection of such identities was systematically developed through the Bailey machinery by Slater \cite{Sla52}.

From the perspective of partition theory, the product sides of \eqref{id:RR} generate partitions whose parts satisfy specific congruence conditions modulo $5$, whereas the series sides reflect difference conditions imposed on adjacent parts (namely, that adjacent parts differ by at least $2$). This profound duality between congruence and difference conditions was extended to arbitrary odd moduli by Gordon \cite{Gor61} and Andrews \cite{And74}, and to even moduli by Bressoud \cite{bre80}. In the study of higher-rank or higher-modulus partition theorems, multisums arise naturally: each independent summation index accounts for parts lying in designated residue classes or structural blocks, while the denominator factors keep track of admissible part increments.

Analytically, a central problem connecting Rogers-Ramanujan type series with the theory of modular forms is the systematic classification of modular $q$-hypergeometric series. This investigation is intimately tied to the study of Nahm sums and their deep interplay with conformal field theory, the Bloch group, and the dilogarithm function \cite{Nah07, Zag07}. However, this analytic perspective requires careful demarcation: Nahm's original conjecture is known to fail in general, as explicit counterexamples were constructed by Vlasenko and Zwegers \cite{VZ11}. Consequently, multivariable parameterized series cannot simply be presumed to yield modular forms without dedicated analysis. The objective of the present paper is combinatorial: we seek to explain how specific multisums arise from partition structures and to construct explicit bijective proofs for their product sides.

Recently, Wang and his collaborators \cite{CW23, CW24, LW24, WW23, WW25, wan24} carried out an extensive investigation into these structures, deriving a remarkable series of Rogers-Ramanujan type identities by means of algebraic and analytic machinery, including the constant term method, Bailey's lemma, recurrence relations, and complex contour integrals. A natural problem then arises: can one provide combinatorial interpretations of these identities in terms of integer partitions and construct explicit bijective proofs? Such an investigation follows the fruitful line of combinatorial refinements and difference-condition techniques developed in, for example, \cite{AG95, kur10, kur19, kur21, KS22, war97}. To formulate these interpretations, we recall the standard notions of partition theory; further details may be found in Andrews' treatise \cite{andtp}.

For a given non-negative integer $n$, a \emph{partition} $\lambda$ of $n$ is a weakly increasing sequence of positive integers summing to $n$. We write $\lambda = \lambda_1 + \lambda_2 + \cdots + \lambda_m$ or $\lambda = (\lambda_1, \lambda_2, \dots, \lambda_m)$ with $\lambda_1 \leq \lambda_2 \leq \cdots \leq \lambda_m$, where the \emph{weight} of $\lambda$ is denoted by $|\lambda| = n$ and each integer $\lambda_i$ is called a \emph{part} of $\lambda$ for $1 \leq i \leq m$. The number of parts $m$ is called the \emph{length} of $\lambda$, denoted by $\ell(\lambda)$. In this paper, labeled partitions may occur, so we let \(\ell_x(\la)\) denote the number of parts of \(\la\) labeled as \(x\). We let $\sP(n)$ denote the set of partitions of $n$, write $p(n) := |\sP(n)|$, and set $\sP := \bigcup_{n \geq 0} \sP(n)$. Let $\epsilon$ denote the unique empty partition of $0$. For any two partitions $\lambda, \mu \in \sP$, let $\lambda \oplus \mu$ denote the partition obtained by taking the multiset union of the parts of $\lambda$ and $\mu$ and arranging them in non-decreasing order.

This paper can be considered as a sequel to \cite{FL241, FL242, FL261, li25}, in which we combinatorially prove further entries from Wang's papers. We focus here on \emph{strict partitions}, which are partitions with distinct parts. We denote the set of strict partitions by $\sD$, and let $\sD(n)$ stand for the subset of strict partitions of weight $n$. Fu and the author gave a combinatorial proof of the following identity in \cite[Theorem 1.2]{FL241}:
\begin{align}\label{FL}
\sum_{i, j\geq 0}\frac{x^{i+2j}q^{i^2+2ij+2j^2+j}}{(q; q)_{i}(q^2; q^2)_{j}} &=(-xq; q)_{\infty}.
\end{align}
The neat combinatorial structure of \eqref{FL} motivates the study of analogous identities whose series sides involve triple sums.

\begin{theorem}[{cf.~\cite[Eq.~(6.7)]{CW24}}]\label{thm:1}
We have 
\begin{align}
\sum_{i, j, k\geq 0}\frac{x^{i+j+2k}q^{3i^2+3j^2+4k^2+2ij+4ik+4jk-2j}}{(q^4; q^4)_i(q^4; q^4)_j(q^4; q^4)_k}=(-xq; q^2)_{\infty}.\label{eq1}
\end{align}
\end{theorem}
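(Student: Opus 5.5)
We would give a bijective proof, in the spirit of the proof of \eqref{FL} in \cite{FL241}. The right-hand side $(-xq;q^2)_\infty$ is the generating function of strict partitions into odd parts, with $x$ marking the number of parts. Expanding each $1/(q^4;q^4)_n$ as the generating function of partitions into multiples of $4$ with at most $n$ parts, the left-hand side of \eqref{eq1} becomes a sum over triples $(i,j,k)$. Each summand is a base weight $B(i,j,k)=3i^2+3j^2+4k^2+2ij+4ik+4jk-2j$ together with three independent partitions $\al,\be,\ga$ into multiples of $4$ having at most $i$, $j$, $k$ parts respectively. The statistic $x^{i+j+2k}$ says that a strict odd partition $\la$ should be decomposed into $i$ singletons of one kind, $j$ singletons of another kind, and $k$ \emph{pairs}.

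\textbf{Step 1: the decomposition.} We scan $\la$ from the smallest part and greedily group consecutive odd parts differing by exactly $2$, i.e.\ $(2m-1,2m+1)$, into pairs of total $4m$. The remaining parts are isolated odd parts, which we split into two types $\I$ and $\II$. Small cases suggest the typing: $(i,j,k)=(1,0,0)$ has base weight $3$, $(0,1,0)$ has base weight $1$, $(0,0,1)$ has base $1+3=4$, and $(1,1,0)$ has base $6=1+5$. So the type should be governed by a residue mod $4$ of the part, normalized by its position, i.e.\ the residue of the part after subtracting the minimal admissible value at that position. We then check that the minimal strict odd partition with $i$ parts of type $\I$, $j$ of type $\II$ and $k$ pairs (no two singletons forming a pair) has weight exactly $B(i,j,k)$. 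We take it to be the base partition $\la^{(i,j,k)}$, with the pairs placed first, then type $\II$, then type $\I$. The verification is a sum of arithmetic progressions, and the cross terms $2ij+4ik+4jk$ count how much each block is pushed up by the blocks below it.

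\textbf{Step 2: the moves.} Given $(\al,\be,\ga)$, we add the parts of $\ga$ (multiples of $4$) to the pairs, adding $4$ to a pair by raising both of its parts by $2$. We add $\be$ to the type-$\II$ singletons and $\al$ to the type-$\I$ singletons, each time from the largest element of the class downward. Whenever a moving part or pair must cross another block, we let it jump over it by a fixed adjustment rule. For example, a singleton crossing a pair $(2m-1,2m+1)$ changes $s<2m-1$ into the configuration pair $(2m+1,2m+3)$ with singleton $s'$, arranged so that the total weight increases by exactly $4$ and the types are preserved. Then every increment by $4$ is realized and the triple $(i,j,k)$ is invariant.

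\textbf{Step 3: the inverse.} We invert the construction by reading off the decomposition of $\la$, then successively moving blocks back down toward the base partition while recording the increments as $\al,\be,\ga$.

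The main obstacle is Step 2 together with well-definedness. The crossing rules must guarantee three things: no two singletons ever become consecutive odd numbers (otherwise the greedy decomposition would re-pair them); the type of each singleton is preserved; and the procedure is injective. I would first settle the precise type rule by computing small weights, for instance $n\le 20$, comparing the coefficients of both sides term by term. I would then design the crossing rules so that each crossing is weight-neutral modulo the $4$ being added. If a direct bijection proves unwieldy, the fallback is a partly analytic route: sum over $k$ using the $q$-binomial theorem to reduce \eqref{eq1} to a double sum, and interpret that double sum combinatorially as in \eqref{FL}.
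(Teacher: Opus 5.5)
Your proposal has a genuine gap. The overall picture is the right one: pairs of consecutive odd parts found by greedy left-to-right pairing, plus two kinds of survivors. The type rule you sketch is in fact the correct invariant: read $o-(2p-1)\bmod 4$ for a survivor $o$ in position $p$ of $\la$, with type II when this is $0$. However, Step 1 as stated is false, and Step 2, which carries the whole argument, is not given.

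\textbf{Step 1 fails.} With your type rule, two consecutive survivors of the same type differ by at least $6$, whereas survivors of different types need only differ by $4$. So the minimal configurations interleave the types (II, I, II, I, \dots). The cross term $2ij$ does not arise from one block being pushed up by the other. For instance, $B(1,2,0)=15$ is attained by $1+5+9$, with types II, I, II. Your stacked base (pairs, then the type-II block, then the type-I block) is $1+7+11=19$. In general, stacking overshoots $B(i,j,k)$ whenever $i\ge 1$ and $j\ge 2$.

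\textbf{Step 2 is missing.} Step 2 needs a rule for a type-I and a type-II singleton passing each other. A $+4$ step past another survivor shifts the position by one and flips the residue mod $4$, so both parts have to be raised together. This is in addition to the pair/singleton crossings, and injectivity must be checked for all these interactions. None of this is specified, and your sample crossing rule leaves $s'$ undefined.

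\textbf{The fallback fails.} The $k$-dependence of the summand is $x^{2k}q^{4k^2+4k(i+j)}/(q^4;q^4)_k$. This is a Rogers--Ramanujan-type series with no product form, so there is no $q$-binomial summation over $k$.

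\textbf{The missing idea (the paper's route).} The paper never moves the two singleton types relative to each other. After $x\mapsto xq$, $q^2\mapsto q$, the right side becomes $(-xq;q)_\infty$. For fixed $k$, the $j$ type-II and $i$ type-I parts are encoded by an even set $E=\{2t_1<\cdots<2t_j\}$ and an odd set $O=\{2s_1+1<\cdots<2s_i+1\}$; the corresponding parts are $2k+2t_m+m$ and $2k+2s_m+1+j+m$. Since $E\cap O=\emptyset$, sorting $E\cup O$ as $r_1<\cdots<r_n$ and setting $\ga_m=2k+r_m+m$ gives a weight-preserving bijection with no crossing rules at all (Lemma~\ref{lem:varphi-I}).

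Summing over $i+j=n$ gives $x^nq^{n^2+2kn}/(q;q)_n$, and the theorem collapses to the double-sum identity \eqref{FL}. The bijective proof of \eqref{FL} (pairs versus singletons) is already available. In your language, only the pair/singleton interaction needs moves, and those are exactly the moves of \eqref{FL}. The split into types I and II is just the parity of $r_m$, read off after the pairs are peeled. Note also that the index that collapses is $i+j$ at fixed $k$, not $k$.
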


\begin{theorem}[{cf.~\cite[Eq.~(4.5)]{LW24}}]\label{thm:2}
We have
\begin{align}
\sum_{i, j, k\geq 0}\frac{(-1)^ix^{k}y^{i+j}q^{i^2+2j^2+k^2+2ij+2ik+2jk+i}}{(q^2; q^2)_i(q^2; q^2)_j(q^2; q^2)_k}=(-xq; q^2)_{\infty}.\label{eq2}
\end{align}
\end{theorem}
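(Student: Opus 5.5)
The plan is to remove the dependence on $y$ first, since the product side of \eqref{eq2} does not involve $y$ at all. I will then recognise what remains as Euler's identity. Completing the square, the exponent of $q$ in \eqref{eq2} is
\begin{align*}
i^2+2j^2+k^2+2ij+2ik+2jk+i=(i+j+k)^2+j^2+i.
\end{align*}
Put $m=i+j$ and group the terms of the triple sum by the pair $(m,k)$. The summand then factors as
\begin{align*}
\frac{x^k y^m q^{(m+k)^2}}{(q^2;q^2)_k}\cdot S_m,\qquad S_m:=\sum_{i=0}^{m}\frac{(-1)^i q^{(m-i)^2+i}}{(q^2;q^2)_i(q^2;q^2)_{m-i}}.
\end{align*}

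The key step is to show that $S_m=\delta_{m,0}$. Write $(m-i)^2+i=m^2+(i^2-i)+(2-2m)i$ and pull out $q^{m^2}/(q^2;q^2)_m$. This leaves a finite sum of $q^2$-binomial coefficients:
\begin{align*}
S_m=\frac{q^{m^2}}{(q^2;q^2)_m}\sum_{i=0}^{m}{m\brack i}_{q^2}(-1)^i q^{2\binom{i}{2}}\bigl(q^{2-2m}\bigr)^i .
\end{align*}
By the finite $q$-binomial theorem in base $q^2$ (Rothe's identity), the inner sum equals $(q^{2-2m};q^2)_m=\prod_{t=0}^{m-1}(1-q^{2-2m+2t})$. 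For $m\ge 1$ the factor with $t=m-1$ is $1-q^0=0$, so $S_m=0$. For $m=0$ we clearly have $S_0=1$.

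Hence only the terms with $i=j=0$ survive, and the left side of \eqref{eq2} reduces to $\sum_{k\ge0}x^kq^{k^2}/(q^2;q^2)_k$. This equals $(-xq;q^2)_\infty$ by Euler's identity: $q^{k^2}=q^{1+3+\cdots+(2k-1)}$, and $1/(q^2;q^2)_k$ adds even increments to the parts. Combinatorially, this is exactly the decomposition of a partition into distinct odd parts as a staircase base $(1,3,\dots,2k-1)$ plus a partition into at most $k$ parts, each part doubled.

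For a bijective version in the spirit of the paper, the vanishing of $S_m$ should be realised by a sign-reversing involution. Interpret the $(i,j)$ data together with $k$ as a base strict partition carrying $i$ parts labelled ``negative'' and $j$ parts labelled ``positive'', each decorated by a partition into even parts. The involution will toggle the label of one distinguished part, for instance the smallest labelled part, while adjusting the even-part decorations so that the weight is preserved. The fixed points will be exactly the configurations with $i=j=0$.

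I expect the main obstacle to be designing this toggle so that it preserves weight. Converting a negative part into a positive one changes the quadratic form by $(2j+1)-1=2j$, and this must be absorbed by shifting the largest part of the corresponding $(q^2;q^2)$ decoration. If a purely combinatorial toggle proves awkward, I would fall back on the analytic Rothe-identity step above, which is routine and already yields the theorem.
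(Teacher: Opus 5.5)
Your proof is correct, but it is analytic, whereas the paper's proof is bijective. You complete the square, $i^2+2j^2+k^2+2ij+2ik+2jk+i=(i+j+k)^2+j^2+i$. For fixed $m=i+j$ the $k$-dependence then splits off as $x^ky^mq^{(m+k)^2}/(q^2;q^2)_k$, leaving a $k$-independent finite sum $S_m$. The finite $q$-binomial theorem gives $S_m=\delta_{m,0}$, and the identity collapses to Euler's $\sum_k x^kq^{k^2}/(q^2;q^2)_k=(-xq;q^2)_\infty$. Even more directly, $\sum_{m\ge 0}z^mS_m$ is the product of Euler's series for $1/(-zq;q^2)_\infty$ and for $(-zq;q^2)_\infty$, so it equals $1$.

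The paper never separates the variables like this. After $x\mapsto xq$, $q^2\mapsto q$, it reads the $(j,k)$-sum, via the base-plus-increments bijection $\psi^{\II}$, as the generating function of strict partitions $\la$ with $k$ parts labelled $x$ and $j$ parts labelled $y$, each $y$-part followed by a gap of at least $2$. It reads the $i$-sum as signed strict partitions $\mu$ with all parts exceeding $j+k$. It then builds a sign-reversing involution $\varphi^{\II}$ that transfers one $y$-part between $\la$ and $\mu$. One direction deletes the smallest $y$-part $\la_p$, lowers the later parts by $1$, and inserts $\la_p+n-p$ into $\mu$; the other reverses this, the choice depending on a comparison with $\mu_1$. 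The fixed points are exactly the strict partitions carrying only $x$-labels.

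Your route is far shorter, and it shows structurally why $y$ is a dummy parameter. The cancellation happens separately for each $m\ge 1$ and each $k$, through the same factor $S_m$. This is consistent with $\varphi^{\II}$ preserving both the number of $x$-parts and the total $y$-count. The paper's route instead exhibits the cancellation explicitly on partitions, which is its stated aim.

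Your closing label-toggling sketch is not needed for correctness and is not a worked-out involution. If you want a cheap bijective version, there is a simpler route than the paper's. The vanishing of $S_m$ for $m\ge 1$ is realised by the classical involution on pairs $(\alpha,\beta)$, where $\alpha$ is a partition into $i$ odd parts with sign $(-1)^{\ell(\alpha)}$ and $\beta$ is a partition into $j$ distinct odd parts. The involution moves the smallest part of $\alpha\cup\beta$ to the other side, sending a tie to $\alpha$. With the factor $x^kq^{(m+k)^2}/(q^2;q^2)_k$ as a spectator, this gives a complete combinatorial proof.
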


\begin{theorem}[{cf.~\cite[Eqs.~(4.23), (4.24)]{LW24}}]\label{thm:3}
We have
\begin{align}
\sum_{i, j, k\geq 0}\frac{x^{j+k}q^{i^2/2+3j^2+2k^2+2ij+2ik+4jk+i/2}}{(q; q)_i(q^2; q^2)_j(q^2; q^2)_k}&=(-q; q)_{\infty}(-xq^2; q^4)_{\infty},\label{eq3-1}\\
\sum_{i, j, k\geq 0}\frac{x^{j+k}q^{i^2/2+3j^2+2k^2+2ij+2ik+4jk+3i/2+2j}}{(q; q)_i(q^2; q^2)_j(q^2; q^2)_k}&=(-q^2; q)_{\infty}(-xq^2; q^4)_{\infty}.\label{eq3-2}
\end{align}
\end{theorem}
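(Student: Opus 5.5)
The plan is to prove both identities of Theorem~\ref{thm:3} analytically, by summing out one index at a time with classical $q$-series identities until each series collapses to an instance of Euler's identity. The combinatorial and bijective reading can then be extracted afterwards from the same factorization. The key observation is that $i$ enters the exponent only through $i^2/2+i/2+2i(j+k)$, which depends on $j$ and $k$ only through $n:=j+k$. Moreover,
\[
3j^2+2k^2+4jk=2n^2+j^2 .
\]
So the plan is to sum over $i$ first, then reorganize the $(j,k)$-sum by $n=j+k$.

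\textbf{Step 1 (sum over $i$).} By Euler's identity $\sum_{i\ge0}q^{\binom{i+1}{2}}z^i/(q;q)_i=(-zq;q)_\infty$ with $z=q^{2n}$, the $i$-sum in \eqref{eq3-1} equals
\[
(-q^{2n+1};q)_\infty=\frac{(-q;q)_\infty}{(-q;q)_{2n}} .
\]
In \eqref{eq3-2} the extra factor $q^{i}$ gives instead
\[
(-q^{2n+2};q)_\infty=\frac{(-q^2;q)_\infty}{(-q^2;q)_{2n}} .
\]
This already produces the factors $(-q;q)_\infty$ and $(-q^2;q)_\infty$ on the product sides.

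\textbf{Step 2 (inner sum over $j+k=n$).} For fixed $n$, the remaining $(j,k)$-part of \eqref{eq3-1} is
\[
\frac{x^nq^{2n^2}}{(q^2;q^2)_n}\sum_{j=0}^n q^{j^2}{n\brack j}_{q^2}.
\]
Writing $q^{j^2}=q^{2\binom{j}{2}}q^{j}$, the finite $q$-binomial theorem in base $q^2$ evaluates the sum to $(-q;q^2)_n$. In \eqref{eq3-2} the additional factor $q^{2j}$ gives $(-q^3;q^2)_n$ instead.

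\textbf{Step 3 (simplification).} For \eqref{eq3-1}, the coefficient of $x^nq^{2n^2}$ becomes
\[
\frac{(-q;q^2)_n}{(q^2;q^2)_n\,(-q;q)_{2n}}.
\]
Using $(-q;q)_{2n}=(q^2;q^2)_{2n}/(q;q)_{2n}$ and $(q;q)_{2n}=(q;q^2)_n(q^2;q^2)_n$, this equals
\[
\frac{(q;q^2)_n(-q;q^2)_n}{(q^2;q^2)_{2n}}=\frac{(q^2;q^4)_n}{(q^2;q^4)_n(q^4;q^4)_n}=\frac{1}{(q^4;q^4)_n}.
\]
For \eqref{eq3-2}, note that
\[
(-q^3;q^2)_n=(-q;q^2)_n\,\frac{1+q^{2n+1}}{1+q},\qquad (-q^2;q)_{2n}=(-q;q)_{2n}\,\frac{1+q^{2n+1}}{1+q}.
\]
The two correction factors cancel, so the coefficient reduces to exactly the same expression. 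In both cases the remaining series is $\sum_{n\ge0}x^nq^{2n^2}/(q^4;q^4)_n$. Since $2n^2=4\binom{n}{2}+2n$, Euler's identity in base $q^4$ gives $(-xq^2;q^4)_\infty$, which finishes the proof.

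The main obstacle I expect is not in this analytic chain, which is routine once one sees the substitution $n=j+k$ and the identity $3j^2+2k^2+4jk=2n^2+j^2$. The harder part is turning each step into a bijection in the spirit of the paper. Step~1 reads as attaching a strict partition counted by $(-q^{2n+1};q)_\infty$ to the $n$ labeled parts. Step~2 reads as distributing $n$ parts into two colour classes, where the $j$ parts of one class carry the staircase $q^{j^2}$ of distinct odd increments. Step~3 is the delicate point: the cancellation $(q;q^2)_n(-q;q^2)_n=(q^2;q^4)_n$ must be realized combinatorially, pairing odd-part contributions against the quotient $(q;q)_{2n}/(q^2;q^2)_{2n}$. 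For that step I would first try a sign-free reformulation, namely $1/(-q;q)_{2n}=(q;q^2)_n$ times a distinct-odd-parts structure, which recasts the identity as a decomposition of a pair (strict partition, partition into distinct parts $\equiv2\pmod4$) into a base partition plus a free strict partition.
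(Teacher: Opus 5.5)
Your analytic chain is correct as written. With $n=j+k$ and $E\in\{0,1\}$ distinguishing the two identities:
\begin{itemize}
\item Euler's identity turns the $i$-sum into $(-q^{2n+1+E};q)_\infty$.
\item The rewriting $3j^2+2k^2+4jk=2n^2+j^2$, together with the finite $q^2$-binomial theorem, gives $q^{2n^2}(-q^{1+2E};q^2)_n/(q^2;q^2)_n$.
\item The product manipulations leave $q^{2n^2}/(q^4;q^4)_n$; for $E=1$ this uses the cancellation of $(1+q^{2n+1})/(1+q)$.
\item Euler's identity in base $q^4$ finishes.
\end{itemize}

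This is a genuinely different route from the paper's, which proves both identities at once in the form \eqref{sec3-1} bijectively. The paper reads the $i$-sum exactly as you do, as strict partitions $\mu$ with parts $>2n+E$, but it never evaluates the $j$-sum. Instead it interprets each $(j,k)$-summand as the generating function of $\sA^{\III_E}_{j,k}$, the distinct-part partitions with $j$ odd and $k$ even parts subject to parity-dependent gap conditions. This comes from a base partition plus even increments, via $\psi^{\III}$, or alternatively from a parity-word count. A stripping algorithm $\varphi^{\III}$ then sends $(\lambda,\mu)$ to $(\alpha,\pi)\in\sD^{(2,4)}\times\sD^{\geq E+1}$. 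It peels from $\lambda$ a set of distinct parts lying in $\{E+1,\dots,2n+E\}$, leaves $n$ distinct parts $\equiv 2\pmod 4$, and merges the peeled parts with $\mu$.

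In your notation, $\varphi^{\III}$ is a bijective realization of two facts:
\begin{itemize}
\item the identity $\sum_{j+k=n}q^{j^2+2Ej}/\bigl((q^2;q^2)_j(q^2;q^2)_k\bigr)=(-q^{1+E};q)_{2n}/(q^4;q^4)_n$, which is your Steps 2 and 3 combined;
\item the identity $(-q^{1+E};q)_{2n}(-q^{2n+E+1};q)_\infty=(-q^{1+E};q)_\infty$, which is your Step 1 read backwards.
\end{itemize}
Because it works directly on the product side, it never has to divide by $(-q;q)_{2n}$. That is essentially the reformulation you sketch in your last sentence: a pair (strict partition, distinct parts $\equiv 2\pmod 4$) decomposed into a base-plus-increments partition and a free strict partition.

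Your route buys brevity and full rigor from two classical identities. The paper's route buys an explicit partition-theoretic explanation of where the multisum comes from, which is that section's purpose. The cost is much greater length and some verifications that are only sketched, such as the adjustment and normalization details behind $\psi^{\III}$.
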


\begin{theorem}[{cf.~\cite[Eqs.~(4.26), (4.30)]{LW24}}]\label{thm:4}
We have
\begin{align}
\sum_{i, j, k\geq 0}\frac{(-1)^jx^{i}y^{j+k}q^{i^2+6j^2+4k^2+4ij+4ik+8jk+2k}}{(q^2; q^2)_i(q^4; q^4)_j(q^4; q^4)_k}&=(-xq; q^2)_{\infty},\label{eq4-1}\\
\sum_{i, j, k\geq 0}\frac{(-1)^{i+j}x^{i+2j+2k}q^{i^2+6j^2+4k^2+4ij+4ik+8jk-i-4j-2k}}{(q^2; q^2)_i(q^4; q^4)_j(q^4; q^4)_k}&=(x; q^2)_{\infty}.\label{eq4-2}
\end{align}
\end{theorem}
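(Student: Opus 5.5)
The plan is to collapse both triple sums to single sums by grouping the summation indices through $n:=i+2j+2k$ and $m:=j+k$. A direct expansion gives
\begin{align*}
i^2+6j^2+4k^2+4ij+4ik+8jk=(i+2j+2k)^2+2j^2 ,
\end{align*}
so the exponent in \eqref{eq4-1} equals $n^2+2j^2+2k$. Likewise, the exponent in \eqref{eq4-2} equals $n^2-n+2j^2-2j$. We also have $x^{i+2j+2k}=x^n$ and $(-1)^{i+j}=(-1)^{n+j}$, since $i\equiv n \pmod 2$.

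Fix $n$ and $m$ with $2m\le n$. Then $i=n-2m$ is determined, so the factor $q^{n^2}/(q^2;q^2)_{n-2m}$ (respectively $(-1)^n x^n q^{n^2-n}/(q^2;q^2)_{n-2m}$) can be pulled out. What remains is an inner sum over $j+k=m$. Write $p=q^4$.

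For \eqref{eq4-1}, note that $q^{2j^2+2k}=p^{m/2}p^{\binom{j}{2}}$, so the inner sum is
\begin{align*}
y^m\sum_{j+k=m}\frac{(-1)^j q^{2j^2+2k}}{(p;p)_j(p;p)_k}=\frac{y^m p^{m/2}}{(p;p)_m}\sum_{j=0}^m(-1)^jp^{\binom{j}{2}}{m\brack j}_p=\frac{y^m p^{m/2}}{(p;p)_m}(1;p)_m .
\end{align*}
The last equality is the finite $q$-binomial theorem. For \eqref{eq4-2}, we have $q^{2j^2-2j}=p^{\binom{j}{2}}$, so the inner sum is exactly $(1;p)_m/(p;p)_m$. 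In both cases $(1;p)_m=0$ for $m\ge1$, so only the terms with $j=k=0$ survive.

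This explains why the left side of \eqref{eq4-1} does not depend on $y$. The identities then reduce to Euler's two classical formulas:
\begin{align*}
\sum_{i\ge0}\frac{x^iq^{i^2}}{(q^2;q^2)_i}=(-xq;q^2)_\infty,\qquad \sum_{n\ge0}\frac{(-1)^nx^nq^{n^2-n}}{(q^2;q^2)_n}=(x;q^2)_\infty .
\end{align*}
Each is the standard decomposition of a strict partition into $i$ distinct odd parts. Such a partition is the staircase $1+3+\cdots+(2i-1)$ of weight $i^2$ plus a partition into even parts of size at most $2i$. In the second formula the parts are shifted down by one, which gives distinct even parts $0,2,\dots$ of minimal total $n^2-n$.

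To match the paper's combinatorial aim, I would then make the vanishing step bijective. The term $(-1)^jp^{\binom{j}{2}}/((p;p)_j(p;p)_k)$ with $j+k=m$ can be read as a pair $(\alpha,\beta)$. Here $\alpha$ is a strict partition into $j$ distinct multiples of $4$ (shifted so that $0$ is allowed), $\beta$ is an ordinary partition into multiples of $4$ with at most $k$ parts, and the sign is $(-1)^j$. A sign-reversing involution that toggles the smallest available part between $\alpha$ and $\beta$ pairs off every object when $m\ge1$. This is the usual combinatorial proof of $(1;p)_m=0$ via Sylvester-type toggling.

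The main obstacle is this last step. The involution has to be formulated so that it is compatible with the partition structures produced by the paper's decomposition of strict partitions, with $j$-blocks and $k$-blocks carrying parts of sizes $\equiv 0 \pmod 4$ attached to the odd staircase. I would define it directly on labeled partitions (labels for $i$, $j$, $k$, counted by $\ell_x$) by moving the smallest label-$j$ or label-$k$ part. I expect the fiddly point to be checking that the weight shifts $2j^2$ versus $2k$ are exchanged correctly when a part moves between the two blocks.
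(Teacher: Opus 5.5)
Your argument is correct, and its skeleton matches the paper's: for fixed $i$ the $(j,k)$-contributions cancel within each slice $j+k=m\ge 1$, and Euler's identity handles what remains. The difference lies in how the cancellation is proved.

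Your completed square $i^2+6j^2+4k^2+4ij+4ik+8jk=(i+2j+2k)^2+2j^2$ makes each slice a multiple of $(1;q^4)_m$ via the finite $q$-binomial theorem. This is shorter, treats \eqref{eq4-1} and \eqref{eq4-2} directly, and shows at once why the left side of \eqref{eq4-1} cannot depend on $y$.

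The paper instead derives both identities from the normalized form \eqref{sec4-1}. After removing a common shift $2i$, it reads the $(j,k)$-sum as signed pairs $(\la,\mu)$ with $\la_r=2t_r+4r-1$ ($k$ odd parts, gaps at least $4$) and $\mu_r=2s_r+4k+6r-3$ ($j$ odd parts, gaps at least $6$). It then proves Lemma~\ref{lem:sec4-varphiIV} by an involution $\varphi^{\mathrm{IV}}$ that moves the largest part between $\la$ and $\mu$ while shifting the parts of $\mu$ by $\pm 4$. That buys a fully bijective proof in the paper's base-plus-increments language, at the cost of more case checking.

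The step you flag as the main obstacle is in fact already handled there. With $\hat s_r=s_r+r-1$, the total weight is $2\bigl(\sum_r t_r+\sum_r\hat s_r\bigr)+2m^2+m$. In these coordinates $\varphi^{\mathrm{IV}}$ is exactly your Sylvester-type toggle, applied to the largest entry rather than the smallest: if $t_k>\hat s_j$ it moves $t_k$ into the strict sequence $(\hat s_r)$, and otherwise it moves $\hat s_j$ into the weak sequence $(t_r)$. In your own encoding the exchange of $2j^2$ against $2k$ is automatic, since $q^{2j^2+2k}=q^{2m}q^{4\binom{j}{2}}$. Just treat $\beta$ as a multiset of exactly $k$ entries from $4\mathbb{N}_0$, zeros included, so that toggling an entry equal to $0$ still changes $(j,k)$ by $(\pm1,\mp1)$.
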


\begin{theorem}[{cf.~\cite[Eqs.~(4.27), (4.29)]{LW24}}]\label{thm:5}
We have
\begin{align}
\sum_{i, j, k\geq 0}\frac{(-1)^{i+j}x^{i+2j+k}q^{i^2+6j^2+4k^2+4ij+4ik+8jk-2i-6j-5k}}{(q^2; q^2)_i(q^4; q^4)_j(q^4; q^4)_k}&=(xq; q^4)_{\infty},\label{eq5-1}\\
\sum_{i, j, k\geq 0}\frac{(-1)^{i+j}x^{i+2j+k}q^{i^2+6j^2+4k^2+4ij+4ik+8jk-2j-k}}{(q^2; q^2)_i(q^4; q^4)_j(q^4; q^4)_k}&=(xq; q^4)_{\infty}.\label{eq5-2}
\end{align}
\end{theorem}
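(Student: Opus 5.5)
The plan is to prove both identities of Theorem~\ref{thm:5} by one combinatorial scheme, in the spirit of the base-partition arguments behind \eqref{FL} and Theorems~\ref{thm:1}--\ref{thm:4}. The first step is to rewrite the quadratic form. A direct check gives
\[
i^2+6j^2+4k^2+4ij+4ik+8jk=(i+2j+2k)^2+2j^2 .
\]
Put $n=i+2j+2k$. Then $q^{n^2}=q^{1+3+\cdots+(2n-1)}$ is the weight of the staircase of the first $n$ odd numbers, which will serve as the base partition. After $x\mapsto -x$ both product sides become $(-xq;q^4)_\infty$, so the target is the generating function of strict partitions into parts $\equiv 1 \pmod 4$, counted by number of parts. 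On the series side the overall sign becomes $(-1)^{j+k}$.

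The second step is to read the summand as a generating function for labelled objects $(\beta,\mu,\nu,\rho)$ of weight $n^2+2j^2+(\text{linear shift})$ plus contributions from the denominators. Here $\beta$ is the odd staircase, with $i$ parts labelled $a$ and $j$ \emph{pairs} of consecutive parts labelled $b$. The remaining $k$ pairs are labelled $c$ and are merged, each into a single part: this is why a $c$-pair contributes $2$ to $n$ but only $1$ to the exponent of $x$. The term $2j^2$ together with the linear shifts $-2i-6j-5k$ in \eqref{eq5-1}, or $-2j-k$ in \eqref{eq5-2}, fixes how the $b$-pairs and $c$-parts are positioned inside the staircase. Finally, $1/(q^2;q^2)_i$ adds even increments to the $a$-parts, and $1/(q^4;q^4)_j$, $1/(q^4;q^4)_k$ add multiples of $4$ to the $b$-pairs and $c$-parts, in the usual ``move the largest blocks first'' fashion, so that distinctness is preserved. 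I would first verify on small $n$ that this labelling reproduces the exponents exactly. I would also check the alternative reduction: summing over $i$ by Euler's identity $\sum_i(-1)^iz^iq^{i^2-i}/(q^2;q^2)_i=(z;q^2)_\infty$ with $z=xq^{4j+4k-1}$ (resp.\ $xq^{4j+4k+1}$) leaves a double sum in $j,k$. That double sum is a useful analytic cross-check and may suggest the right labelling.

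The third step, which I expect to be the main obstacle, is a sign-reversing involution on these labelled objects. It must cancel every configuration containing a part $\not\equiv 1\pmod 4$, and it must also cancel the $b$/$c$ ambiguity. The natural candidate toggles the smallest ``defect'': an adjacent pair $(4m+1,4m+3)$ of $a$-parts is exchanged with a $b$-pair (this changes $j$ by one and hence the sign $(-1)^{j}$), and an $a$-part $\equiv 3\pmod 4$ is absorbed into or split off a $c$-part (changing $k$ by one). I would try to define the involution by scanning from the smallest part and acting on the first defect found, and then check that the weight and the exponent of $x$ are preserved and that the map is an involution. The fixed points should be exactly the configurations with only $a$-parts, all $\equiv1\pmod4$, which gives $(-xq;q^4)_\infty$.

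The two identities differ only in the linear shift, i.e.\ in where the blocks start. So once \eqref{eq5-1} is done, \eqref{eq5-2} should follow by the same involution applied to shifted base partitions, checking the boundary cases at the smallest parts separately.
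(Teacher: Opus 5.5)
Your proposal stops where the proof would begin. You never define the labelled model, that is, where the $b$-pairs and $c$-parts sit relative to the $a$-parts, which is what must produce $2j^2$ and the cross terms $4ij+4ik+8jk$. You never define the involution either. The concrete hints you do give break down in two places.

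\emph{The model for \eqref{eq5-1}.} No model of the kind you describe exists for \eqref{eq5-1}, which you treat as the main case. Its linear part differs from that of \eqref{eq5-2} by $-2i-4j-4k=-2n$. In your picture this is the staircase $-1,1,\dots,2n-3$, not a repositioning of blocks. Concretely, the terms $(i,j,k)=(1,0,0)$ and $(0,0,1)$ each contain $q^{-1}$, and $(0,1,0)$ contains $x^2q^0$; these cancel only across different terms. The paper therefore first substitutes $x\mapsto -xq^3$ (resp.\ $x\mapsto -xq$) and then $q^2\mapsto q$. This makes every exponent nonnegative, but it changes the targets to $(-xq^2;q^2)_\infty$ and $(-xq;q^2)_\infty$ respectively. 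So the two identities are genuinely different (the cases $E=0,1$ of \eqref{sec5-3}), and neither is a shifted copy of the other.

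\emph{The first toggle.} As stated it fails. Take the coefficient of $x^2$ in \eqref{eq5-2} after $x\mapsto -x$. The term $(0,1,0)$ is $-q^4/(1-q^4)$, one $b$-object in every weight $4l$. Adjacent $a$-pairs $(4m+1,4m+3)$ only have weights $8m+4$. Your $c$-move does not change $j$, so the $b$-objects of weight $8,16,\dots$ become fixed points of the wrong sign. The defect has to be every pair of adjacent odd $a$-parts $(2l-1,2l+1)$.

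\emph{The missing idea.} The structural step you are missing is to separate the two cancellations.
\begin{itemize}
\item In the paper the $k$-sum is pulled outside, as in \eqref{sec5-4}. The $c$-parts then form an independent factor $(-1)^kx^kq^{2k^2+(E-1)k}/(q^2;q^2)_k$, which only raises the lower bounds of the other parts.
\item For each fixed $k$, the involution $\varphi^{\V}$ of Lemma~\ref{lem:sec5-varphiV} acts on the first pair of consecutive $a$-parts, i.e.\ the first equality $a_m=a_{m+1}$ of the increments. Comparing $a_m$ with the smallest $b$-increment $b_1$ decides whether to turn that pair into a new smallest $b$-part, or the reverse.
\item In your normalization this is your first toggle, corrected, with an explicit rule for which defect to act on. Its fixed points are the difference-$2$ partitions with parts exceeding $2k$. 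This collapses the $(i,j)$-sum to $\sum_i x^iq^{i^2+2ik}/(q;q)_i$, and the whole theorem to the double-sum identity \eqref{sec5-reduced}.
\end{itemize}
All of the $a$/$c$ cancellation is concentrated in \eqref{sec5-reduced}. Even there the paper does not obtain both cases from one involution: $E=1$ is quoted from an earlier sign-reversing involution, and $E=0$ is deduced algebraically from an earlier parameterized identity.

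Your ``absorb or split off a part $\equiv 3\pmod 4$'' move would have to supply this second involution. That needs explicit gap conditions between $a$- and $c$-parts under which the move is closed, and a check that the resulting generating function is the double sum in \eqref{sec5-reduced} in your normalization. Nothing in the proposal does this. As it stands, only the quadratic-form identity $i^2+6j^2+4k^2+4ij+4ik+8jk=(i+2j+2k)^2+2j^2$ and the sign computation are established.
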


\begin{theorem}[{cf.~\cite[Eqs.~(3.190), (3.192), (3.196)]{WW25}}]\label{thm:6}
We have
\begin{align}
\sum_{i, j, k\geq 0}\frac{q^{3i^2/2+2j^2+4k^2+2ij+4ik+4jk-3i/2-j-2k}}{(q; q)_i(q^2; q^2)_j(q^2; q^2)_k}&=(-1; q)_{\infty},\label{eq6-1}\\
\sum_{i, j, k\geq 0}\frac{q^{3i^2+4j^2+8k^2+4ij+8ik+8jk-2i-2k}}{(q^2; q^2)_i(q^4; q^4)_j(q^4; q^4)_k}&=(-q; q^2)_{\infty},\label{eq6-3}\\
\sum_{i, j, k\geq 0}\frac{q^{3i^2/2+2j^2+4k^2+2ij+4ik+4jk-i/2+j}}{(q; q)_i(q^2; q^2)_j(q^2; q^2)_k}&=\frac{1}{(q; q^2)_{\infty}}.\label{eq6-4}
\end{align}
\end{theorem}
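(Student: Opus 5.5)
The plan is to prove each of \eqref{eq6-1}, \eqref{eq6-3}, \eqref{eq6-4} bijectively, by reading the left-hand side as the generating function of strict partitions built from a base partition plus independent increments. This follows the template that worked for \eqref{FL} in \cite{FL241}. The first step is to rewrite the quadratic form. For \eqref{eq6-1} and \eqref{eq6-4},
\[
\tfrac32 i^2+2j^2+4k^2+2ij+4ik+4jk=(i+j+2k)^2+\tfrac12 i^2+j^2 .
\]
For \eqref{eq6-3}, the exponent $3i^2+4j^2+8k^2+4ij+8ik+8jk$ equals $2(i+j+2k)^2+i^2+2j^2$. Setting $N=i+j+2k$, the term $(i+j+2k)^2$ (respectively $2N^2$) comes from a staircase-type base partition with $N$ parts, made of odd parts in the case of \eqref{eq6-3}. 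The terms $\tfrac12 i^2$ and $j^2$ should then be produced by how the $N$ parts are organised: $i$ single parts, $j$ special singles, and $k$ pairs of ``adjacent'' parts, which accounts for the $2k$ in $N$.

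Next, for a fixed triple $(i,j,k)$, I will define a base strict partition $\beta_{i,j,k}$ whose weight equals the exponent (including the linear terms $-\tfrac32 i-j-2k$, $-\tfrac12 i+j$, and so on). I then let $1/(q;q)_i$ act by adding a partition with at most $i$ parts to the singles, and let $1/(q^2;q^2)_j$ and $1/(q^2;q^2)_k$ act by moving the special singles and the pairs in steps of $2$, which preserves parity and distinctness. The linear terms fix the smallest admissible configuration. For \eqref{eq6-1} the right side $(-1;q)_\infty=2(-q;q)_\infty$ counts strict partitions in which a part $0$ is allowed, which explains the negative linear shift. For \eqref{eq6-4} the product $1/(q;q^2)_\infty=(-q;q)_\infty$ counts ordinary strict partitions, so \eqref{eq6-1} and \eqref{eq6-4} should share one decomposition with different boundary conventions. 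For \eqref{eq6-3} the target is strict partitions into odd parts, and the analogous construction should be run with all moves in steps of $2$ or $4$.

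The heart of the argument is the converse decomposition. Given a strict partition $\lambda$ (possibly containing $0$, or with only odd parts), I will scan from the largest part downward and greedily split it into pairs of parts at minimal distance, giving $k$, and into singles of two types, giving $i$ and $j$. Subtracting $\beta_{i,j,k}$ should leave three independent partitions: one with at most $i$ parts, one with at most $j$ parts of even size, and one with at most $k$ parts of even size. I must verify that this map is well defined and inverse to the insertion map. I expect the main obstacle here: when an increment pushes a pair past a single, or two pairs collide, the relative order of the blocks changes. I must then show that a canonical reordering, namely a block ``jumping'' over its neighbour with a fixed weight adjustment, reproduces exactly the cross terms $2ij+4ik+4jk$.

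As a fallback, if the bijection for one of the identities resists, I will argue analytically. The plan is to sum over $i$ using Euler's identity $\sum_i q^{\binom{i}{2}}z^i/(q;q)_i=(-z;q)_\infty$ after fixing $j+2k$. This should reduce each identity to a double sum of the shape \eqref{FL}, to which the combinatorial result of \cite{FL241} then applies.
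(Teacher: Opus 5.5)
Your reading of the product sides is fine; in particular, $(-1;q)_\infty$ does count strict partitions with a possible part $0$. The model you propose for the sum sides, however, cannot work. You let $j$ count ``special singles'', so $\beta_{i,j,k}$ has $N=i+j+2k$ parts. Your increments are added part by part, so every partition produced from the $(i,j,k)$ term also has $N$ parts. Your bijection would therefore prove the refinement in which $x^{\ell(\lambda)}$ on the product side corresponds to $x^{i+j+2k}$ on the sum side, and that refinement is false. In \eqref{eq6-4}, the terms $(i,j,k)=(1,0,0)$ and $(0,1,0)$ are $q/(1-q)$ and $q^3/(1-q^2)$, both with $N=1$. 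But one-part strict partitions are generated by $q/(1-q)$ alone. At $q^3$ the $(0,1,0)$ term must account for $1+2$, not a second copy of $3$. The same obstruction appears in \eqref{eq6-1}, where an extra $q/(1-q^2)$ sits beside $1/(1-q)$. It also appears in \eqref{eq6-3}, where an extra $q^4/(1-q^4)$, i.e.\ $1+3$, sits beside $q/(1-q^2)$. The correct bookkeeping is $x^{i+2j+2k}$; for instance, the two-part strict partitions are
\[
\frac{q^5}{(q;q)_2}+\frac{q^3+q^4}{1-q^2}=\frac{q^3}{(q;q)_2}.
\]
So $j$ and $k$ must both index \emph{pairs}: $j$ pairs at difference $1$, $k$ pairs at difference $2$, plus $i$ singletons. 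One pair type together with two kinds of singles is the wrong shape.

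Even after this correction, the decisive step is only announced in your proposal. That step is an intrinsic rule that reads $(i,j,k)$ off an arbitrary strict partition, is invariant under the increment moves, and produces the cross terms. This is exactly what the paper supplies. All three identities are specializations of the single identity \eqref{eq8-1}:
\begin{itemize}
\item $x\mapsto q^{-1}$ gives \eqref{eq6-1};
\item $q\mapsto q^2$ together with $x\mapsto q^{-1}$ gives \eqref{eq6-3};
\item $x\mapsto 1$ gives \eqref{eq6-4}.
\end{itemize}
So one construction suffices. Identity \eqref{eq8-1} is the case $m=2$ of Theorem~\ref{thm:8}, proved by the peeling of Definition~\ref{def:6-1}. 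Stage $1$ pairs maximal runs of consecutive parts from left to right and shifts each survivor by $\tau_1(a)=a+2R_1(a)$. Stage $2$ does the same for difference-$2$ runs, with $\tau_2(a)=a+4R_2(a)$. Lemma~\ref{lem:gap_preservation} guarantees gaps $\ge t+1$ after stage $t$.

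In $\beta^{(i,\mathbf{k})}$, with $k_1=j$ and $k_2=k$, the blocks appear in the order: difference-$1$ pairs, difference-$2$ pairs, singletons. The cross terms $2ij+4ik+4jk$ are just the block offsets $K_2=2j$ and $K_3=2j+4k$. The leapfrog adjustment $[a,a+t]_t,\,a+2t\mapsto a,\,[a+t+1,a+2t+1]_t$ preserves the signature, because the survivor lands on the same value $a+2t+2tR$ in both configurations. This is the invariance your ``canonical reordering'' needs, and it is not available without the shift $\tau_t$.

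Your fallback also fails as stated. The $i$-dependence is $q^{3i^2/2-i/2}=q^{\binom{i}{2}}q^{i^2}$, so Euler's identity cannot perform the $i$-summation. In fact none of the three indices carries the Euler quadratic form, so the reduction to \eqref{FL} does not go through as described.
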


\begin{theorem}[{cf.~\cite[Eqs.~(4.26)--(4.31)]{wan24}}]\label{thm:7}
We have
\begin{align}
\sum_{i,j,k \ge 0} \frac{q^{3i^2/2 + j^2 + k^2/2 + 2ij + ik + jk}}{(q;q)_i (q;q)_j (q;q)_k} &= \frac{(-q^{\frac{1}{2}};q)_\infty}{(q,q^4;q^5)_\infty},\label{eq7-1}\\
\sum_{i,j,k \ge 0} \frac{q^{3i^2/2 + j^2 + k^2/2 + 2ij + ik + jk + i/2 + k/2}}{(q;q)_i (q;q)_j (q;q)_k} &= \frac{(-q;q)_\infty}{(q,q^4;q^5)_\infty},\label{eq7-2}\\
\sum_{i,j,k \ge 0} \frac{q^{3i^2/2 + j^2 + k^2/2 + 2ij + ik + jk - i/2 - k/2}}{(q;q)_i (q;q)_j (q;q)_k} &= 2\frac{(-q;q)_\infty}{(q,q^4;q^5)_\infty}, \label{eq7-3}\\
\sum_{i,j,k \ge 0} \frac{q^{3i^2/2 + j^2 + k^2/2 + 2ij + ik + jk + i + j}}{(q;q)_i (q;q)_j (q;q)_k} &= \frac{(-q^{\frac{1}{2}};q)_\infty}{(q^2,q^3;q^5)_\infty},\label{eq7-4}\\
\sum_{i,j,k \ge 0} \frac{q^{3i^2/2 + j^2 + k^2/2 + 2ij + ik + jk + 3i/2 + j + k/2}}{(q;q)_i (q;q)_j (q;q)_k} &= \frac{(-q;q)_\infty}{(q^2,q^3;q^5)_\infty},\label{eq7-5} \\
\sum_{i,j,k \ge 0} \frac{q^{3i^2/2 + j^2 + k^2/2 + 2ij + ik + jk + i/2 + j - k/2}}{(q;q)_i (q;q)_j (q;q)_k} &= 2\frac{(-q;q)_\infty}{(q^2,q^3;q^5)_\infty}. \label{eq7-6}
\end{align}
\end{theorem}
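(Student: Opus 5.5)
We will reduce each identity in Theorem~\ref{thm:7} to the classical Rogers--Ramanujan identities \eqref{id:RR} by a change of summation variables, followed by two applications of the $q$-binomial theorem and Euler's identity. The key observation is that the quadratic form in the exponent splits as a sum of three half-squares. Set $M=i+j$ and $N=i+j+k$. Since
\[
\tfrac{1}{2}(i+j+k)^2=\tfrac{i^2}{2}+\tfrac{j^2}{2}+\tfrac{k^2}{2}+ij+ik+jk,
\]
and since $i^2+\tfrac{j^2}{2}+ij=\tfrac{(i+j)^2}{2}+\tfrac{i^2}{2}$, we get
\[
\tfrac{3i^2}{2}+j^2+\tfrac{k^2}{2}+2ij+ik+jk=\tfrac{N^2}{2}+\tfrac{M^2}{2}+\tfrac{i^2}{2}.
\]
The denominator becomes $(q;q)_i(q;q)_{M-i}(q;q)_{N-M}$, and the summation runs over $N\ge M\ge i\ge 0$.

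We first do the inner sum over $i$ with $M$ fixed. By the finite $q$-binomial theorem,
\[
\sum_{i=0}^{M}\frac{q^{i^2/2+ai}}{(q;q)_i(q;q)_{M-i}}=\frac{(-q^{1/2+a};q)_M}{(q;q)_M}.
\]
Next we do the sum over $r=N-M\ge 0$. Writing $N^2/2=M^2/2+rM+r^2/2$, Euler's identity gives
\[
\sum_{r\ge0}\frac{q^{r^2/2+rM+cr}}{(q;q)_r}=(-q^{M+1/2+c};q)_\infty .
\]
With $a=c$ the two products telescope: $(-q^{1/2+a};q)_M(-q^{M+1/2+a};q)_\infty=(-q^{1/2+a};q)_\infty$, which no longer depends on $M$. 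What remains is
\[
(-q^{1/2+a};q)_\infty\sum_{M\ge0}\frac{q^{M^2+bM}}{(q;q)_M},
\]
and \eqref{id:RR} evaluates this for $b=0$ and $b=1$.

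The remaining step is bookkeeping: rewrite each linear term in the exponent as $ai+bM+ck$, and check that $a=c\in\{0,\pm\tfrac12\}$ and $b\in\{0,1\}$ in every case.
\begin{itemize}
\item In \eqref{eq7-1}, $a=b=c=0$.
\item In \eqref{eq7-2}, $a=c=\tfrac12$ and $b=0$, giving the factor $(-q;q)_\infty$.
\item In \eqref{eq7-3}, $a=c=-\tfrac12$ and $b=0$, giving $(-1;q)_\infty=2(-q;q)_\infty$.
\item In \eqref{eq7-4}, the linear term is $i+j=M$, so $b=1$ and $a=c=0$.
\item In \eqref{eq7-5}, $\tfrac{3i}{2}+j+\tfrac{k}{2}=\tfrac{i}{2}+M+\tfrac{k}{2}$, so $a=c=\tfrac12$ and $b=1$.
\item In \eqref{eq7-6}, $\tfrac{i}{2}+j-\tfrac{k}{2}=-\tfrac{i}{2}+M-\tfrac{k}{2}$, so $a=c=-\tfrac12$ and $b=1$.
\end{itemize}
The only real obstacle is spotting the substitution $M=i+j$, $N=i+j+k$ that diagonalizes the quadratic form. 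After that, the telescoping of $(-q^{1/2+a};q)_M$ against $(-q^{M+1/2+a};q)_\infty$ is what makes the extra product factor split off.

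In keeping with the paper's aims, the same computation can also be read combinatorially. The factor $(-q^{1/2+a};q)_\infty$ generates strict partitions into parts congruent to $\tfrac12+a$ modulo $1$. Each such partition is cut at the threshold $M$: parts below it are counted by $i$ through the $q$-binomial sum, and parts above it by $k$. The Rogers--Ramanujan sum $q^{M^2+bM}/(q;q)_M$ is attached via a base partition on $M$ parts. One could make this into a bijection by cutting the strict partition at $M$, but the analytic reduction above already constitutes a complete proof.
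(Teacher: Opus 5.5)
Your proof is correct. It reaches the same result as the paper by a different method: a direct analytic evaluation rather than a bijection.

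The paper never evaluates Theorem \ref{thm:7} directly. It proves the two-parameter identity \eqref{eq9} bijectively, using the labelled base partitions $\beta^{\VI,(i,j)}$, the map $\psi^{\VI}$, and the bijection $\varphi^{\VI}$ onto $\sR\times\sD$. It then obtains all six identities by the specialisations $x\mapsto q^{-1/2},1,q^{-1}$ and $y\mapsto 1,q$, together with \eqref{id:RR}.

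Your computation is essentially the generating-function shadow of that argument, step by step:
\begin{itemize}
\item Your Euler step is the paper's $k$-sum, which generates strict partitions with parts $>i+j$ (the set $\sB^{\VI}_{i,j}$).
\item Your $q$-binomial step is, for fixed $n=i+j$, the identity $\sum_{i+j=n}x^iq^{3i^2/2+j^2+2ij+i/2}/((q;q)_i(q;q)_j)=q^{n^2}(-xq;q)_n/(q;q)_n$. The map $\varphi^{\VI}$ realises this combinatorially by turning the $xy$-labelled parts into the distinct parts $c_m\le n$ of $\pi$, leaving a partition in $\sR$ with $n$ parts.
\item Your telescoping $(-xq;q)_n(-xq^{n+1};q)_\infty=(-xq;q)_\infty$ corresponds to merging those $c_m$ with the parts $\ge n+1$ of $\mu$.
\end{itemize}

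Your constraint $a=c$ says exactly that the $x$-weight is $i+k$, with $x=q^{a-1/2}$, and your parameter $b$ corresponds to $y\mapsto q^b$. In fact, inserting $x^{i+k}y^{i+j}$ into your three steps proves \eqref{eq9} in a few lines.

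Your route buys brevity and full rigour without the labelled move-and-adjust machinery behind $\psi^{\VI}$. The paper's route buys an explicit weight-preserving bijection that gives each summation index a partition-theoretic meaning. Your closing remark about cutting the strict partition at $M$ is precisely what $\varphi^{\VI}$ makes explicit. Both arguments still invoke the Rogers--Ramanujan identities for the final product.
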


We observe that all the identities in Theorems \ref{thm:6} and \ref{thm:7} involve solely the modular parameter $q$ and contain no auxiliary variables. In this paper, we establish unified parameterized generalisations of these families. For the identities in Theorem \ref{thm:7}, we develop the following two-parameter identity.
\begin{theorem}\label{thm:9}
We have
\begin{align}
\sum_{i, j, k\geq 0}\frac{x^{i+k}y^{i+j}q^{3i^2/2+j^2+k^2/2+2ij+ik+jk+i/2+k/2}}{(q; q)_i(q; q)_j(q; q)_k}=(-xq; q)_{\infty}\sum_{n\geq 0}\frac{y^nq^{n^2}}{(q; q)_n}.\label{eq9}
\end{align}
\end{theorem}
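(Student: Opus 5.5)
The plan is to prove \eqref{eq9} analytically, by summing out the indices one at a time with two classical identities: Euler's identity $\sum_{k\ge0} z^k q^{k(k+1)/2}/(q;q)_k=(-zq;q)_\infty$ and the finite $q$-binomial theorem $\sum_{i=0}^n \left[{n\atop i}\right]_q x^i q^{i(i+1)/2}=(-xq;q)_n$. A bijective version (strict partitions counted by $(-xq;q)_\infty$ paired with Rogers--Ramanujan partitions counted by $\sum y^nq^{n^2}/(q;q)_n$) can be read off afterwards, but the algebraic skeleton comes first.

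\emph{Step 1 (sum over $k$).} Isolate the $k$-dependence of the summand. The exponent contains $k^2/2+k/2+k(i+j)=\binom{k+1}{2}+k(i+j)$, and the $k$-dependent factors are $x^k/(q;q)_k$. For fixed $i,j$, Euler's identity with $z=xq^{i+j}$ gives
\begin{align*}
\sum_{k\ge0}\frac{x^kq^{k(k+1)/2+k(i+j)}}{(q;q)_k}=(-xq^{i+j+1};q)_\infty=\frac{(-xq;q)_\infty}{(-xq;q)_{i+j}}.
\end{align*}
So the left side of \eqref{eq9} equals
\begin{align*}
(-xq;q)_\infty\sum_{i,j\ge0}\frac{x^iy^{i+j}q^{3i^2/2+j^2+2ij+i/2}}{(q;q)_i(q;q)_j(-xq;q)_{i+j}}.
\end{align*}

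\emph{Step 2 (reindex by $n=i+j$).} Since $n^2=i^2+2ij+j^2$, the remaining exponent is
\begin{align*}
\tfrac32 i^2+j^2+2ij+\tfrac i2=n^2+\tfrac{i(i+1)}{2}.
\end{align*}
The double sum therefore becomes
\begin{align*}
\sum_{n\ge0}\frac{y^nq^{n^2}}{(q;q)_n(-xq;q)_n}\sum_{i=0}^n\left[{n\atop i}\right]_q x^iq^{i(i+1)/2}.
\end{align*}

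\emph{Step 3 (inner sum).} By the finite $q$-binomial theorem (Rothe's identity), the inner sum equals $(-xq;q)_n$. This cancels the denominator factor $(-xq;q)_n$ and leaves $(-xq;q)_\infty\sum_n y^nq^{n^2}/(q;q)_n$, which is the right side of \eqref{eq9}.

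The only delicate point is the bookkeeping in Steps 1--2: one must check that the cross terms $ik+jk$ combine exactly into $k(i+j)$, and that the residual quadratic form in $(i,j)$ collapses to $n^2+\binom{i+1}{2}$. Both checks are routine.

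\emph{Combinatorial version.} Reading the same steps bijectively should give the partition-theoretic proof in the spirit of the paper:
\begin{itemize}
\item $n$ is the length of a Rogers--Ramanujan partition (staircase $q^{n^2}$);
\item $i$ parts of it are "promoted" to carry an $x$-label via the $q$-binomial step;
\item the $k$ further distinct $x$-labelled parts exceed $i+j$.
\end{itemize}
Making that insertion explicit (the analogue of the $q$-binomial theorem as a bijection on strict partitions with bounded parts) is the main obstacle for a bijective proof. I would attack it first with the standard Sylvester-type merging of a strict partition into a base partition.

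Finally, specialising $x,y$ should recover the identities of Theorem~\ref{thm:7}. For instance, $x=y=1$ gives \eqref{eq7-2} via \eqref{id:RR}.
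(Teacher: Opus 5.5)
\textbf{Verdict.} Your proposal is correct, and it takes a genuinely different (analytic) route from the paper's proof.

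Each step checks. The $k$-sum is Euler's identity with $z=xq^{i+j}$, giving $(-xq;q)_\infty/(-xq;q)_{i+j}$. With $n=i+j$ the residual exponent is $n^2+\binom{i+1}{2}$, and $1/((q;q)_i(q;q)_j)$ is the $q$-binomial coefficient divided by $(q;q)_n$. The finite $q$-binomial theorem then gives $\sum_{i=0}^n \genfrac{[}{]}{0pt}{}{n}{i}_q x^iq^{\binom{i+1}{2}}=(-xq;q)_n$, which cancels the denominator.

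\textbf{How the paper proceeds.} The paper does no $q$-series manipulation here; its proof is entirely bijective.
\begin{enumerate}
\item It reads the $k$-sum as strict $x$-labelled partitions $\mu\in\sB^{\VI}_{i,j}$ with parts at least $i+j+1$.
\item It reads the $(i,j)$-sum as the base partition $\beta^{\VI,(i,j)}$ plus increments. The map $\psi^{\VI}$ sends this onto $\sA^{\VI}_{i,j}$: partitions in $\sR$ with $i$ parts labelled $xy$, each at distance at least $3$ from its predecessor.
\item The map $\varphi^{\VI}$ subtracts $1$ from every part from each $xy$-position $a_m$ onward. It records $c_m=n-a_m+1\in\{1,\dots,n\}$ and merges these $c_m$ with $\mu$ into a single strict partition $\pi$.
\end{enumerate}

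\textbf{Correspondence with your steps.} Your steps line up exactly with this construction. Your Step~1 together with the final cancellation is the split $(-xq;q)_\infty=(-xq;q)_n(-xq^{n+1};q)_\infty$ of a strict partition into parts $\le n$ and parts $>n$. Your Step~3, the $q$-binomial theorem at fixed $n$, is what $\psi^{\VI}$ followed by $\varphi^{\VI}$ establishes bijectively.

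\textbf{What each route buys.} Your route gives brevity and complete rigor from two classical identities. The paper's route gives an explicit weight-preserving map $\bigcup_{i,j}(\sA^{\VI}_{i,j}\times\sB^{\VI}_{i,j})\to\sR\times\sD$ with $\ell_y(\la)=\ell(\al)$ and $\ell(\mu)+\ell_{xy}(\la)=\ell(\pi)$, which is the paper's stated aim.

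\textbf{On your bijective sketch.} The obstacle you flag is resolved in the paper without any Sylvester-type merging. Once the $xy$-parts are encoded by the gap-$3$ condition, inserting the bounded strict partition $\{c_m\}$ is just subtracting a staircase of $1$'s.
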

All six identities in Theorem \ref{thm:7} can be recovered from \eqref{eq9} via the following specialisations, employing the two classical Rogers-Ramanujan identities \eqref{id:RR}:
\begin{align*}
&\eqref{eq9}\xrightarrow[\text{by the first identity in \eqref{id:RR}}]{\text{$y\mapsto 1$ and $x\mapsto q^{-\frac{1}{2}}$}}\eqref{eq7-1},\quad \eqref{eq9}\xrightarrow[\text{by the second identity in \eqref{id:RR}}]{\text{$y\mapsto q$ and $x\mapsto q^{-\frac{1}{2}}$}}\eqref{eq7-4},\\
&\eqref{eq9}\xrightarrow[\text{by the first identity in \eqref{id:RR}}]{\text{$y\mapsto 1$ and $x\mapsto 1$}}\eqref{eq7-2},\quad \eqref{eq9}\xrightarrow[\text{by the second identity in \eqref{id:RR}}]{\text{$y\mapsto q$ and $x\mapsto 1$}}\eqref{eq7-5},\\
&\eqref{eq9}\xrightarrow[\text{by the first identity in \eqref{id:RR}}]{\text{$y\mapsto 1$ and $x\mapsto q^{-1}$}}\eqref{eq7-3},\quad \eqref{eq9}\xrightarrow[\text{by the second identity in \eqref{id:RR}}]{\text{$y\mapsto q$ and $x\mapsto q^{-1}$}}\eqref{eq7-6}.
\end{align*}

For the family in Theorem \ref{thm:6}, we construct a multisum identity equipped with a refinement parameter and an arbitrary number of summation indices.

\begin{theorem}\label{thm:8}
For any integer $m\geq 0$, we have
\begin{align}
\sum_{i, k_1, k_2, \dots, k_m\geq 0}\frac{x^{i+\sum_{t=1}^m 2k_t}q^{M(i, k_1, \dots, k_m)}}{(q; q)_i(q^2; q^2)_{k_1}(q^2; q^2)_{k_2}\cdots (q^2; q^2)_{k_m}}=(-xq; q)_{\infty},\label{eq8}
\end{align}
where
\begin{align}\label{M}
M(i, k_1, \dots, k_m)&:=\sum_{t=1}^m 2tk_t^2+\frac{m+1}{2}i^2+\sum_{t=1}^m\sum_{s=1}^{t-1}4sk_sk_t\nonumber\\
&\qquad\qquad\qquad +\sum_{t=1}^m(2-t)k_t+\left(\frac{1-m}{2}+\sum_{t=1}^m 2tk_t\right)i.
\end{align}
\end{theorem}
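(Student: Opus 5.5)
The plan is to prove \eqref{eq8} bijectively, by induction on $m$, in the same spirit as the proof of \eqref{FL} in \cite{FL241}. Two base cases anchor it. For $m=0$ we have $M(i)=\tfrac{i^2+i}{2}$, so the left side is $\sum_i x^iq^{\binom{i+1}{2}}/(q;q)_i$, and \eqref{eq8} is Euler's identity. For $m=1$ we have $M(i,k_1)=i^2+2k_1^2+2ik_1+k_1$, and \eqref{eq8} is exactly \eqref{FL} with $j=k_1$.

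The main step is to give each $(m+1)$-tuple $(i,k_1,\dots,k_m)$ a \emph{base partition} $\beta$ with the following properties:
\begin{itemize}
\item $\beta$ is strict, has $i+2\sum_t k_t$ parts, and has weight exactly $M(i,k_1,\dots,k_m)$;
\item its parts are labeled: $i$ of them are "singletons", and $k_t$ of them are "$t$-blocks", each $t$-block consisting of two parts.
\end{itemize}
The term $\sum_{t}2tk_t^2$, together with the cross terms $4sk_sk_t$ and $2tk_t i$, suggests the following placement. The $t$-blocks of larger type sit above those of smaller type. A $t$-block is a pair whose total weight grows by $2t$ each time another block or singleton is placed beneath it; this is why its weight is counted with multiplicity $t$. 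The singletons carry a staircase of weight $\tfrac{m+1}{2}i^2+\tfrac{1-m}{2}i$ on top of everything. Before anything else, I would fix the exact placement (the gap between the two parts of a $t$-block, and the offsets $2-t$) by matching it against $M$ for small $m$ and small $k_t$.

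Next I would define two kinds of motion on $\beta$:
\begin{itemize}
\item translating a block by one unit of weight $2$, encoded by a partition into even parts from $1/(q^2;q^2)_{k_t}$;
\item shifting singletons by one, encoded by a partition from $1/(q;q)_i$.
\end{itemize}
These motions are applied in the standard order: largest objects first, with the usual "jump over" rule when a moving singleton meets a block. The aim is that every strict partition arises exactly once, and that the label counts recover $(i,k_1,\dots,k_m)$. The variable $x$ counts parts, and the motions preserve the number of parts, so the refinement by $x$ is automatic.

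An alternative route is to make the induction itself bijective. I would merge the $m$-blocks with the singletons, via an $m=1$-type bijection applied to a suitably rescaled sub-configuration. This should reduce the $(m+1)$-fold sum to the $m$-fold one with identical $x$-weight, and the case $m=1$ of \eqref{eq8} then closes the induction.

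The main obstacle is the jump rule. When a singleton or a $t$-block passes over an $s$-block with $s\ne t$, the weight change has to be exactly the one recorded by the cross terms $4sk_sk_t$ and $2tk_ti$. The move must also stay reversible, so that the decomposition of a given strict partition into singletons and blocks of the various types is unique. To handle this, I would first define a canonical greedy decomposition of an arbitrary strict partition, reading its parts from the largest downward and grouping near-consecutive parts into $t$-blocks according to their gap structure. I would then check that the inverse of the motion procedure reproduces this greedy decomposition.
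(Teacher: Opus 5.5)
\textbf{Your primary route has a genuine gap.} Base partition, even increments for the $t$-blocks, ordinary increments for the singletons, and a canonical decomposition to invert the moves: that is the paper's strategy in outline. But the ingredient that makes it a proof is deferred to the sentence where you say you ``would first define a canonical greedy decomposition''.

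The paper's key idea is a specific invariant, the multi-stage peeling of Definition~\ref{def:6-1}. It runs for $t=1,\dots,m$ in increasing order, each time on the shifted partition $\lambda^{(t-1)}$ rather than on $\lambda$. At stage $t$ one pairs the maximal difference-$t$ runs from left to right and replaces each survivor $a$ by $a+2tR_t(a)$. That shift does two things:
\begin{itemize}
\item It raises the minimal gap from $t$ to $t+1$ (Lemma~\ref{lem:gap_preservation}), so the next stage makes sense.
\item It makes the leapfrog $[a,a+t]_t,\,a+2t\mapsto a,\,[a+t+1,a+2t+1]_t$ invisible to all later stages. In both configurations the survivor lands on $a+2t+2tR$, where $R$ counts the peeled pairs further right.
\end{itemize}
A top-down reading of the gaps of $\lambda$ itself comes with no such invariance. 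You give no argument that it is preserved by your moves, or that its fibres are exactly the images of base plus increments. That verification is the whole content of Theorem~\ref{thm:base_partition_minimality} and Lemma~\ref{lem:sec6-varphiVI}.

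Two of your heuristics also point the wrong way:
\begin{itemize}
\item It is the parts lying \emph{above} an $s$-block that are pushed up by $2s$ each. This is where $2sk_si$ and $4sk_sk_t$ ($s<t$) come from, since $\beta^{(k_t)}$ starts at $K_t+1$.
\item With the singletons on top and moved first, a singleton never meets a block. The jumps are made by the pairs, processed for $t=m,\dots,1$ and right to left, leapfrogging single parts.
\end{itemize}

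\textbf{Your alternative route is sound and genuinely different from the paper's.} It closes with one computation you did not do. Direct expansion of \eqref{M} gives
\[
M(i,k_1,\dots,k_m)=M(i+2k_m,k_1,\dots,k_{m-1})+\binom{i}{2},
\]
where the right-hand $M$ is the depth-$(m-1)$ function. Grouping the $(m+1)$-fold sum by $n=i+2k_m$ therefore turns it into the $m$-fold sum, provided that
\[
\sum_{i+2k=n}\frac{q^{\binom{i}{2}}}{(q;q)_i(q^2;q^2)_k}=\frac{1}{(q;q)_n}\qquad(n\ge 0).
\]
This is exactly the coefficient of $x^n$ in \eqref{FL} divided by $q^{\binom{n+1}{2}}$, because $i^2+2ik+2k^2+k=\binom{i+2k+1}{2}+\binom{i}{2}$. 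Iterating down to $m=0$ then leaves Euler's identity.

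This is much shorter than the paper's proof and uses only \eqref{FL}, whose bijective proof is in \cite{FL241}. What it does not give is a combinatorial meaning for $k_1,\dots,k_m$ on the product side. So it yields neither the decomposition $\sD=\bigcup\sD_{i,\mathbf k}$ nor the refinement by $\prod_t x_t^{\pr_t(\lambda)}$ that the paper's self-contained construction provides.

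If you want your induction to be bijective, subtract $(h-1)(t-1)$ from the $h$-th part. This turns stage-$t$ peeling of a partition with gaps at least $t$ into stage-$1$ peeling of a strict partition. The runs, pairs and survivors are the same, and the outputs differ only by the uniform shift $2k(t-1)$, where $k$ is the number of pairs peeled. So your rescaled $m=1$ step, made explicit, rebuilds the paper's algorithm one stage at a time.
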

In particular, when $m=0$, \eqref{eq8} reduces to Euler's classical $q$-exponential identity:
\begin{align}
\sum_{i\geq 0}\frac{x^iq^{i^2/2+i/2}}{(q; q)_i}=(-xq; q)_{\infty}.
\end{align}
When $m=1$ and letting $k_1\mapsto j$, \eqref{eq8} becomes \eqref{FL}, which appears in an equivalent form in Cao and Wang \cite[Eq.~(3.29)]{CW23}. When $m=2$, relabelling the indices $k_1\mapsto j$ and $k_2\mapsto k$, identity \eqref{eq8} reduces to
\begin{align}\label{eq8-1}
\sum_{i, j, k\geq 0}\frac{x^{i+2j+2k}q^{3i^2/2+2j^2+4k^2+2ij+4ik+4jk-i/2+j}}{(q; q)_i(q^2; q^2)_j(q^2; q^2)_k}=(-xq; q)_{\infty}.
\end{align}
All the identities in Theorem \ref{thm:6} can be recovered from \eqref{eq8-1} via the following specialisations:
\begin{align*}
&\eqref{eq8-1}\xrightarrow{\text{$x\mapsto q^{-1}$}}\eqref{eq6-1},\quad \eqref{eq8-1}\xrightarrow{\text{$q\mapsto q^2$ and $x\mapsto q^{-1}$}}\eqref{eq6-3},\\
&\eqref{eq8-1}\xrightarrow[\text{by Euler's identity $1/(q; q^2)_{\infty}=(-q; q)_{\infty}$}]{\text{$x\mapsto 1$}}\eqref{eq6-4}.
\end{align*}

The remainder of this paper is organised as follows. Sections \ref{sec:1}, \ref{sec:2}, \ref{sec:3}, \ref{sec:4}, \ref{sec:5}, \ref{sec:6}, and \ref{sec:7} are devoted to establishing combinatorial interpretations and constructing explicit bijective proofs for the identities in Theorems \ref{thm:1}, \ref{thm:2}, \ref{thm:3}, \ref{thm:4}, \ref{thm:5}, \ref{thm:9}, and \ref{thm:8}, respectively. In addition, in Section \ref{sec:7} we establish a combinatorial decomposition of strict partitions with multiple statistics. Finally, we conclude the paper with brief summary remarks in Section \ref{sec:conclusion}.

\section{Combinatorial proof of Theorem \ref{thm:1}}\label{sec:1}

We first perform an elementary algebraic substitution in \eqref{eq1}: replacing $x$ by $xq$ and subsequently replacing $q^2$ by $q$ yields
\begin{align}
\sum_{i, j, k\geq 0}\frac{x^{i+j+2k}q^{3i^2/2+3j^2/2+2k^2+ij+2ik+2jk+i/2-j/2+k}}{(q^2; q^2)_i(q^2; q^2)_j(q^2; q^2)_k}=(-xq; q)_{\infty}.\label{sec1-1}
\end{align}
The infinite product on the right-hand side of \eqref{sec1-1} is the generating function for strict partitions, with the parameter $x$ recording the number of parts; that is,
\begin{align}\label{sec1-strict}
\sum_{\la\in \sD}x^{\ell(\la)}q^{|\la|}=(-xq; q)_{\infty}.
\end{align}

We now turn our attention to the series on the left-hand side of \eqref{sec1-1}, which factors naturally as
\begin{align}\label{sec1-3}
\text{LHS of }\eqref{sec1-1}=\sum_{k\geq 0}\frac{x^{2k}q^{2k^2+k}}{(q^2; q^2)_k}\sum_{j\geq 0}\frac{x^{j}q^{(2k+1)+\cdots +(2k+3j-2)}}{(q^2; q^2)_j}\sum_{i\geq 0}\frac{x^iq^{(2k+j+2)+\cdots +(2k+j+3i-1)}}{(q^2; q^2)_i}.
\end{align}
Here and throughout, a sum of base parts is understood to be empty whenever the corresponding upper index is zero. For fixed $k, j\geq 0$, the factor $q^{(2k+1)+\cdots +(2k+3j-2)}$ prescribes base parts of the form $2k+1, \dots, 2k+3j-2$, while $1/(q^2; q^2)_j$ generates partitions into even parts with at most $j$ parts. Padding with zeros where necessary, these even parts can be uniquely written as $0\leq 2t_1'\leq \cdots\leq 2t_j'$. Adding the $m$-th even part $2t_m'$ to the $m$-th base part, the factor $q^{(2k+1)+\cdots +(2k+3j-2)}/(q^2; q^2)_j$ serves as the generating function for the partition family
\begin{align*}
\sA^{\I}_{j, k}:=\{\la\in \sD: \ell(\la)=j,\ \la_m=2k+2t_m'+3m-2\ (1\leq m\leq j),\ 0\leq t_1'\leq \cdots\leq t_j'\}.
\end{align*}
By convention, $\sA_{0, k}^{\I}=\{\ep\}$. To facilitate the bijective construction, we introduce the strictly increasing indices $t_m:=t_m'+m-1$ for $1\leq m\leq j$. This yields the equivalent characterization
\begin{align*}
\sA^{\I}_{j, k}=\{\la\in \sD: \ell(\la)=j,\ \la_m=2k+2t_m+m\ (1\leq m\leq j),\ 0\leq t_1< \cdots <t_j\}.
\end{align*}
Similarly, for fixed $k, j, i\geq 0$, the $i$-th sum in \eqref{sec1-3} generates the family
\begin{align*}
\sB_{i, j, k}^{\I}:=\{\mu\in \sD: \ell(\mu)=i,\ \mu_m=2k+(2s_m+1)+j+m\ (1\leq m\leq i),\ 0\leq s_1<\cdots<s_i\},
\end{align*}
with $\sB_{0, j, k}^{\I}=\{\ep\}$. In both families, the variable $x$ marks the number of parts.

For any subset $A\subset \bN_0$, let $\o(A)$ denote the number of odd elements in $A$. We define
\begin{align*}
\sR_{n, k}&:=\left\{\ga\in \sD: \ell(\ga)=n,\ \ga_m=2k+r_{m}+m\ (1\leq m\leq n),\ 0\leq r_1<\cdots <r_{n} \right\},\\
\sR_{i, j, k}^{\I}&:=\left\{\ga\in \sR_{i+j, k}: \o(\{r_1, \dots, r_{i+j}\})=i \right\},
\end{align*}
setting $\sR_{0, k}=\sR_{0, 0, k}^{\I}=\{\ep\}$.

To establish the identity combinatorially, we aim to construct a bijection
\begin{align*}
\varphi^{\I'}: \sA^{\I}_{j, k}\times \sB_{i, j, k}^{\I}&\longrightarrow \sR_{i, j, k}^{\I},\\
(\la, \mu)&\longmapsto \ga.
\end{align*}
To make the bijection fully transparent and free of extraneous parameters, we remove the common shift $2k$ from every part of each partition in $\sA^{\I}_{j, k}$, $\sB_{i, j, k}^{\I}$, and $\sR_{i, j, k}^{\I}$. This subtraction yields the families $\sA^{\I}_{j}$, $\sB_{i, j}^{\I}$, and $\sR_{i, j}^{\I}$, inducing canonical bijections denoted respectively by $f_{\sA}$, $f_{\sB}$, and $f_{\sR}$. These maps assemble into the commutative diagram displayed in Figure \ref{fig:bijection-varphi}.

\begin{figure}[htbp]
\centering
\begin{tikzpicture}[
  scale=1.2,
  every node/.style={font=\large},
  set/.style={draw, rounded corners, minimum width=2.2cm, minimum height=0.9cm, align=center},
  arr/.style={<->, >={Stealth[length=2.5mm]}, thick}
]

\node[set] (A) at (0,3) {$\sA^{\I}_{j, k}\times \sB_{i, j, k}^{\I}$};
\node[set] (B) at (5,3) {$\sR_{i, j, k}^{\I}$};
\node[set] (C) at (0,0) {$\sA^{\I}_{j}\times \sB_{i, j}^{\I}$};
\node[set] (D) at (5,0) {$\sR_{i, j}^{\I}$};

\draw[arr] (A) -- node[above]{$\varphi^{\I'}$} (B);
\draw[arr] (C) -- node[below]{$\varphi^{\I}$} (D);
\draw[arr] (A) -- node[left]{$f_{\sA}\times f_{\sB}$} (C);
\draw[arr] (B) -- node[right]{$f_{\sR}$} (D);

\end{tikzpicture}
\caption{Commutative diagram relating the bijections $\varphi^{\I'}$ and $\varphi^{\I}$.}
\label{fig:bijection-varphi}
\end{figure}

Consequently, establishing the bijection $\varphi^{\I'}$ reduces to proving the following lemma.

\begin{lemma}\label{lem:varphi-I}
For any $i, j\geq 0$, there exists a bijection
\begin{align*}
\varphi^{\I}: \sA^{\I}_{j}\times \sB_{i, j}^{\I}&\longrightarrow \sR_{i, j}^{\I},\\
(\la, \mu)&\longmapsto \ga,
\end{align*}
such that $|\la|+|\mu|=|\ga|$ and $\ell(\la)+\ell(\mu)=\ell(\ga)$.
\end{lemma}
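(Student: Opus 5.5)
The plan is to encode every partition in the three families by its underlying set of indices and then take the union of two sets of opposite parity. After the shift $2k$ is removed, the families become
\begin{align*}
\sA^{\I}_{j}&=\{\la: \la_m=2t_m+m,\ 0\leq t_1<\cdots<t_j\},\\
\sB^{\I}_{i,j}&=\{\mu: \mu_m=(2s_m+1)+j+m,\ 0\leq s_1<\cdots<s_i\},\\
\sR^{\I}_{i,j}&=\{\ga: \ga_m=r_m+m,\ 0\leq r_1<\cdots<r_{i+j},\ \o(\{r_1,\dots,r_{i+j}\})=i\}.
\end{align*}
The map $\la\mapsto\{t_1,\dots,t_j\}$ is a bijection from $\sA^{\I}_{j}$ onto the $j$-subsets of $\bN_0$. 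Likewise $\mu\mapsto\{s_1,\dots,s_i\}$ is a bijection from $\sB^{\I}_{i,j}$ onto the $i$-subsets of $\bN_0$. Finally $\ga\mapsto\{r_1,\dots,r_{i+j}\}$ is a bijection from $\sR^{\I}_{i,j}$ onto the $(i+j)$-subsets of $\bN_0$ that contain exactly $i$ odd elements. The parts are automatically strictly increasing because the indices are, so each image really is a strict partition.

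\emph{Definition of $\varphi^{\I}$.} Given $(\la,\mu)$, set
\[
R:=\{2t_1,\dots,2t_j\}\cup\{2s_1+1,\dots,2s_i+1\}.
\]
The first set consists of even numbers and the second of odd numbers, so the union is disjoint and $|R|=i+j$. Moreover $R$ contains exactly $i$ odd elements. Listing $R$ increasingly as $r_1<\cdots<r_{i+j}$, let $\ga$ be the partition with parts $\ga_m=r_m+m$.

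\emph{Inverse.} Given $\ga\in\sR^{\I}_{i,j}$, split its index set $\{r_1,\dots,r_{i+j}\}$ by parity. The $j$ even elements, halved, recover the $t$'s, and the $i$ odd elements $r$ give the $s$'s via $s=(r-1)/2$. The two compositions are clearly identities, so $\varphi^{\I}$ is a bijection. It also gives $\ell(\la)+\ell(\mu)=j+i=\ell(\ga)$ immediately.

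\emph{Weight.} We have
\[
|\la|=2\sum_{m} t_m+\binom{j+1}{2},\qquad |\mu|=\sum_{m}(2s_m+1)+ij+\binom{i+1}{2},\qquad |\ga|=\sum_{m} r_m+\binom{i+j+1}{2}.
\]
By construction $\sum_{m} r_m=2\sum_{m} t_m+\sum_{m}(2s_m+1)$. The constant terms also agree, since $\binom{i+1}{2}+\binom{j+1}{2}+ij=\binom{i+j+1}{2}$. Hence $|\la|+|\mu|=|\ga|$.

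The only step needing care is this weight bookkeeping. It is precisely where the shift $j$ built into the parts of $\mu$ is used: that shift supplies the cross term $ij$ needed for the triangular numbers to combine. No sorting or insertion algorithm is required, because the parity classes of $R$ never collide.
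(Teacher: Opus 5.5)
Your proposal is correct and is essentially the paper's proof: you merge the even set $\{2t_m\}$ and the odd set $\{2s_m+1\}$ into $R$, set $\ga_m=r_m+m$, and invert by sieving $R$ by parity. Your weight check via $\binom{i+1}{2}+\binom{j+1}{2}+ij=\binom{i+j+1}{2}$ is the same bookkeeping the paper carries out by regrouping the constant terms $\sum m+ij+\sum m$.
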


With Lemma \ref{lem:varphi-I} in hand, we obtain the generating function identity
\begin{align}\label{sec1-2}
\sum_{(\la, \mu)\in \bigcup_{i, j\geq 0}(\sA^{\I}_{j, k}\times \sB_{i, j, k}^{\I})}x^{\ell(\la)+\ell(\mu)}q^{|\la|+|\mu|}&=\sum_{\ga\in \bigcup_{i, j\geq 0}\sR_{i, j, k}^{\I}}x^{\ell(\ga)}q^{|\ga|}=\sum_{n\geq 0}x^n\sum_{\ga\in \bigcup_{i+j=n}\sR_{i, j, k}^{\I}}q^{|\ga|}\nonumber\\
&=\sum_{n\geq 0}x^n\sum_{\ga\in \sR_{n, k}}q^{|\ga|}=\sum_{n\geq 0}\frac{x^{n}q^{n^2+2kn}}{(q; q)_n}.
\end{align}
Indeed, the union over all pairs $(i,j)$ with $i+j=n$ partitions $\sR_{n, k}$ according to the number of odd elements in the underlying set $\{r_1, \dots, r_n\}$. Writing $r_m=(m-1)+u_m$ with $0\leq u_1\leq \cdots\leq u_n$ accounts for the standard denominator factor $(q; q)_n$. Substituting \eqref{sec1-2} into \eqref{sec1-3} reduces identity \eqref{sec1-1} to \eqref{FL}, for which a bijective proof was established by Fu and the author in \cite{FL241}. This completes the combinatorial proof of Theorem \ref{thm:1}.

\begin{proof}[Proof of Lemma \ref{lem:varphi-I}]
We first define the forward mapping $\varphi^{\I}$. Given a pair of partitions $(\la, \mu)\in \sA^{\I}_{j}\times \sB_{i, j}^{\I}$, we carry out the following two steps:
\begin{enumerate}[(1)]
\item Extract the $j$ even integers $0\leq 2t_1<\cdots <2t_j$ from $\la$ to form the set $E:=\{2t_m\}_{m=1}^j$.
\item Extract the $i$ odd integers $0< 2s_1+1<\cdots <2s_i+1$ from $\mu$ to form the set $O:=\{2s_m+1\}_{m=1}^i$.
\end{enumerate}
Because $E\cap O=\emptyset$, their union $E\cup O$ comprises $i+j$ distinct nonnegative integers. Arranging these elements in strictly increasing order, we obtain
\[
0\leq r_1<r_2<\cdots <r_{i+j}.
\]
We then set the parts of the partition $\ga$ to be $\ga_m:=r_m+m$ for $1\leq m\leq i+j$.

By construction, exactly $i$ elements in $\{r_1, \dots, r_{i+j}\}$ originate from $O$ and are therefore odd; thus $\ga\in \sR_{i, j}^{\I}$. It is immediate that $\ell(\la)+\ell(\mu)=j+i=\ell(\ga)$. Furthermore, the operation preserves total weight:
\begin{align*}
|\la|+|\mu|&=\sum_{m=1}^j(2t_m+m)+\sum_{m=1}^i(2s_m+1+j+m)\\
&=\sum_{m=1}^j 2t_m+\sum_{m=1}^i(2s_m+1)+\left(\sum_{m=1}^j m+ij+\sum_{m=1}^i m\right)\\
&=\sum_{m=1}^{i+j}r_m+\frac{(i+j)(i+j+1)}{2}=\sum_{m=1}^{i+j}(r_m+m)=|\ga|.
\end{align*}

Conversely, we construct the inverse mapping $(\varphi^{\I})^{-1}$. Given $\ga\in \sR_{i, j}^{\I}$, we first determine the set $R:=\{\ga_m-m : 1\leq m\leq i+j\}=\{r_1, \dots, r_{i+j}\}$. Since $\o(R)=i$, the set $R$ contains precisely $j$ even integers and $i$ odd integers. We then perform the inverse operations:
\begin{enumerate}[(1)]
\item Collect the $j$ even integers in $R$ and sort them in strictly increasing order as $0\leq r_{m_1}<\cdots<r_{m_j}$. Setting $t_a:=r_{m_a}/2$ for $1\leq a\leq j$ produces a sequence satisfying $0\leq t_1<\cdots <t_j$. We reconstruct $\la$ by setting $\la_a:=2t_a+a$ for $1\leq a\leq j$.
\item Collect the $i$ odd integers in $R$ and sort them in strictly increasing order as $0< r_{n_1}<\cdots<r_{n_i}$. Setting $s_b:=(r_{n_b}-1)/2$ for $1\leq b\leq i$ produces a sequence satisfying $0\leq s_1<\cdots <s_i$. We reconstruct $\mu$ by setting $\mu_b:=(2s_b+1)+j+b$ for $1\leq b\leq i$.
\end{enumerate}
It is straightforward to verify that $(\la, \mu)\in \sA_{j}^{\I}\times \sB_{i, j}^{\I}$, with $|\la|+|\mu|=|\ga|$ and $\ell(\la)+\ell(\mu)=\ell(\ga)$.

Finally, we show that the two constructions are two-sided inverses of each other:
\begin{itemize}
\item $(\varphi^{\I})^{-1}\circ \varphi^{\I}=\mathrm{id}_{\sA_{j}^{\I}\times \sB_{i, j}^{\I}}$. For any $(\la, \mu)\in \sA_{j}^{\I}\times \sB_{i, j}^{\I}$, the mapping $\varphi^{\I}$ merges the even set $E$ and the odd set $O$ into $R$. Because the even and odd integers form disjoint residue classes modulo $2$, sieving $R$ by parity uniquely recovers $E$ and $O$. Since the ordering of elements in $E$ and $O$ uniquely determines the parameters $t_m$ and $s_n$, the inverse map reconstructs the original pair $(\la, \mu)$ without loss.

\item $\varphi^{\I}\circ (\varphi^{\I})^{-1}=\mathrm{id}_{\sR_{i, j}^{\I}}$. For any $\ga\in \sR_{i, j}^{\I}$, the underlying set $R$ contains exactly $i$ odd integers and $j$ even integers, so $(\varphi^{\I})^{-1}$ uniquely splits $R$ into $E$ and $O$. Merging and sorting $E\cup O$ recovers the original ordered set $R$, thereby reconstructing $\ga$ identically.
\end{itemize}
When $i=0$ or $j=0$, the construction remains valid by setting the corresponding list of parts to be empty ($\ep$). This completes the proof of Lemma \ref{lem:varphi-I}.
\end{proof}

\begin{example}
Table \ref{tab:bijection_w14_concise} displays the complete one-to-one correspondence under the bijection $\varphi^{\I}$ at total weight $|\la|+|\mu|=|\ga|=14$.

\begin{table}[htbp]
\centering
\renewcommand{\arraystretch}{1.25}
\begin{tabular}{ccc|ccc}
\toprule
 $(\lambda, \mu)$ & & $\gamma $ & $(\lambda, \mu) $ & & $\gamma $\\
\midrule
 $(\epsilon, 14)$         & $\longleftrightarrow$ & $14$      & $(7, 7)$                & $\longleftrightarrow$ & $6 + 8$  \\
 $(1, 13)$               & $\longleftrightarrow$ & $1 + 13$  & $(1, 3 + 10)$           & $\longleftrightarrow$ & $1 + 3 + 10$ \\
 $(11, 3)$               & $\longleftrightarrow$ & $2 + 12$  & $(1 + 4 + 9, \epsilon)$ & $\longleftrightarrow$ & $1 + 4 + 9$  \\
 $(3, 11)$               & $\longleftrightarrow$ & $3 + 11$  & $(1, 5 + 8)$            & $\longleftrightarrow$ & $1 + 5 + 8$  \\
 $(9, 5)$                & $\longleftrightarrow$ & $4 + 10$  & $(3, 3 + 8)$            & $\longleftrightarrow$ & $2 + 4 + 8$  \\
 $(5, 9)$                & $\longleftrightarrow$ & $5 + 9$   & $(5, 3 + 6)$            & $\longleftrightarrow$ & $2 + 5 + 7$  \\
\bottomrule
\end{tabular}
\vspace{0.5em}
\caption{The bijection $\varphi^{\mathrm{I}}$ for total weight $14$.}
\label{tab:bijection_w14_concise}
\end{table}
\end{example}

Finally, combining Lemma \ref{lem:varphi-I} with the two identities in \eqref{id:RR} yields the following pair of double-sum Rogers-Ramanujan type identities.

\begin{corollary}
We have
\begin{align}
\sum_{i, j\geq 0}\frac{q^{3i^2/2+3j^2/2+ij+i/2-j/2}}{(q^2; q^2)_i(q^2; q^2)_j}&=\frac{1}{(q, q^4; q^5)_{\infty}},\\
\sum_{i, j\geq 0}\frac{q^{3i^2/2+3j^2/2+ij+3i/2+j/2}}{(q^2; q^2)_i(q^2; q^2)_j}&=\frac{1}{(q^2, q^3; q^5)_{\infty}}.
\end{align}
\end{corollary}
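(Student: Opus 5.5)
Specialise the machinery just developed to $k=0$. By Lemma~\ref{lem:varphi-I}, taking $k=0$ in \eqref{sec1-2}, we obtain
\begin{align*}
\sum_{i,j\geq 0}\frac{x^{i+j}q^{(1+\cdots+(3j-2))+((j+2)+\cdots+(j+3i-1))}}{(q^2;q^2)_i(q^2;q^2)_j}=\sum_{n\geq 0}\frac{x^nq^{n^2}}{(q;q)_n}.
\end{align*}
Here the left side is the generating function of $\bigcup_{i,j}\sA^{\I}_{j,0}\times\sB^{\I}_{i,j,0}$, read off from the inner two sums of \eqref{sec1-3} with $k=0$.

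The first step is to evaluate the exponents of the base-part sums. For the sum $1+4+\cdots+(3j-2)$ the value is $\sum_{m=1}^j(3m-2)=3j^2/2-j/2$. For the sum $(j+2)+\cdots+(j+3i-1)$ the value is $\sum_{m=1}^i(j+3m-1)=ij+3i^2/2+i/2$. Their total is $3i^2/2+3j^2/2+ij+i/2-j/2$, which is exactly the exponent in the first claimed identity. This agrees with the exponent of \eqref{sec1-1} at $k=0$, as it should.

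Next, set $x=1$. The right side becomes $\sum_{n}q^{n^2}/(q;q)_n$, which equals $1/(q,q^4;q^5)_\infty$ by the first identity in \eqref{id:RR}. This proves the first identity of the corollary.

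For the second identity, set $x=q$ instead. The left exponent gains $i+j$, giving $3i^2/2+3j^2/2+ij+3i/2+j/2$. The right side becomes $\sum_n q^{n^2+n}/(q;q)_n=1/(q^2,q^3;q^5)_\infty$ by the second identity in \eqref{id:RR}.

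The only point needing care is that \eqref{sec1-2} is derived with $x$ marking the number of parts. Substituting $x=q$ is therefore legitimate as a formal power series identity, and combinatorially it amounts to adding $1$ to each part of $\ga$ on one side and to each part of $(\la,\mu)$ on the other. Since the bijection $\varphi^{\I}$ preserves both weight and length, this substitution is compatible with it. I expect no real obstacle here. The main check is the arithmetic of the base-part sums, in particular that the shift $j$ in $\sB^{\I}_{i,j}$ produces the cross term $ij$.
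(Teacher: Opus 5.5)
Your proposal is correct and is the same argument the paper uses. You specialise \eqref{sec1-2} (i.e.\ Lemma~\ref{lem:varphi-I}) to $k=0$, which gives a left exponent of $3i^2/2+3j^2/2+ij+i/2-j/2$ against $\sum_{n} x^n q^{n^2}/(q;q)_n$. You then set $x=1$ and $x=q$ and apply the two identities in \eqref{id:RR}.
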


\section{Combinatorial proof of Theorem \ref{thm:2}}\label{sec:2}

We first transform \eqref{eq2} by replacing $x$ with $xq$ and then replacing $q^2$ with $q$. This yields
\begin{align}
\sum_{i, j, k\geq 0}\frac{(-1)^i x^k y^{i+j} q^{i^2/2+j^2+k^2/2+ij+ik+jk+i/2+k/2}}{(q; q)_i(q; q)_j(q; q)_k} = (-xq; q)_{\infty}.\label{sec2-1}
\end{align}
The right-hand side of \eqref{sec2-1} carries the same combinatorial interpretation as in \eqref{sec1-strict}. We decompose the left-hand side of \eqref{sec2-1} as
\begin{align}\label{sec2-2}
\text{LHS of }\eqref{sec2-1} = \sum_{j, k\geq 0}\frac{x^k y^j q^{j^2+k^2/2+jk+k/2}}{(q; q)_j(q; q)_k}\sum_{i\geq 0}\frac{(-y)^i q^{(j+k+1)+\cdots +(j+k+i)}}{(q; q)_i}.
\end{align}
For fixed $j, k\geq 0$, let $\sB_{j, k}^{\II}$ denote the set of strict partitions $\mu$ whose parts are all labeled $y$ and whose smallest part, when non-empty, is at least $j+k+1$. In particular, the empty partition satisfies $\ep \in \sB_{j, k}^{\II}$. By the standard generating function for strict partitions,
\begin{align}\label{sec2-4}
\sum_{\mu\in \sB_{j, k}^{\II}}(-y)^{\ell(\mu)}q^{|\mu|} = \sum_{i\geq 0}\frac{(-y)^i q^{(j+k+1)+\cdots +(j+k+i)}}{(q; q)_i}.
\end{align}

Next, we interpret the outer sum of \eqref{sec2-2}. Note that subscripts $x$ and $y$ denote labels and do not affect the numerical values or order of parts. We define the base partition
\begin{align}\label{beta-II}
\be^{\II, (j, k)} = 1_x + 2_x + \cdots + k_x + (k+1)_y + (k+3)_y + \cdots + (k+2j-1)_y,
\end{align}
with the conventional interpretation when $j=0$ or $k=0$. The weight of $\be^{\II, (j, k)}$ is
\begin{align*}
|\be^{\II, (j, k)}| = j^2 + \frac{k^2}{2} + jk + \frac{k}{2}.
\end{align*}
Let $\sP_n = \{\la\in \sP : \ell(\la)\leq n\}$. It follows that
\begin{align}\label{sec2-3}
\sum_{(\be^{\II, (j, k)}, \ga^1, \ga^2)\in \{\be^{\II, (j, k)}\}\times \sP_j\times \sP_k} x^k y^j q^{|\be^{\II, (j, k)}|+|\ga^1|+|\ga^2|} = \frac{x^k y^j q^{j^2+k^2/2+jk+k/2}}{(q; q)_j(q; q)_k}.
\end{align}

Let $\sA_{j, k}^{\II}$ denote the set of strict partitions $\la = \la_1 + \cdots + \la_{j+k}$ with exactly $j$ parts labeled $y$ and $k$ parts labeled $x$, subject to the condition that
\begin{align*}
\la_{m+1} - \la_m \geq 2 \quad \text{whenever $\la_m$ is labeled $y$ and $m < j+k$.}
\end{align*}
One can easily check that $\beta^{\II, (j, k)}$ is the partition with the smallest weight in $\sA^{\II}_{j, k}$. We write $\ell_y(\la)$ for the number of $y$-labeled parts in $\la$.

\begin{lemma}\label{lem:sec2-psiII}
For $j, k\geq 0$, there exists a bijection
\begin{align*}
\psi^{\II}\colon \{\be^{\II, (j, k)}\}\times \sP_j\times \sP_k &\longrightarrow \sA_{j, k}^{\II},\\
(\be^{\II, (j, k)}, \ga^{1}, \ga^{2}) &\longmapsto \la,
\end{align*}
such that
$|\be^{\II, (j, k)}|+|\ga^{1}|+|\ga^{2}|=|\la|$,
$\ell(\be^{\II, (j, k)})=\ell(\la)$, and
$\ell_y(\be^{\II, (j, k)})=\ell_y(\la)$.
\end{lemma}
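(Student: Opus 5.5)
The plan is to build $\psi^{\II}$ through ``excess'' coordinates measured relative to the base partition $\be^{\II,(j,k)}$. The $y$-increments will come from $\ga^1$ and the $x$-increments from $\ga^2$, with a correction term that makes crossings between $x$- and $y$-parts cost nothing in total weight.

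\emph{Excess coordinates.} Let $\la\in\sA^{\II}_{j,k}$.
\begin{itemize}
\item For the $(a+1)$-th smallest $x$-labeled part $\la_m$, let $b$ be the number of $y$-parts below it, and set
\[
e^x_{a+1}:=\la_m-(a+1)-b .
\]
\item For the $(c+1)$-th smallest $y$-labeled part $\la_m$, let $d$ be the number of $x$-parts above it, and set
\[
e^y_{c+1}:=\la_m-(k+2c+1)+d .
\]
\end{itemize}
In $\be^{\II,(j,k)}$ all $x$-parts precede all $y$-parts, so every excess is $0$.

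\emph{Weight bookkeeping.} The quantity $\sum b$ over $x$-parts and the quantity $\sum d$ over $y$-parts both count the pairs ($y$-part, larger $x$-part). Hence
\[
|\la|=|\be^{\II,(j,k)}|+\sum_a e^x_a+\sum_c e^y_c .
\]
So the map $\la\mapsto(\ga^1,\ga^2)$, with $\ga^1$ the multiset $\{e^y_c\}$ and $\ga^2$ the multiset $\{e^x_a\}$, preserves weight. It also preserves the number of parts and the number of $y$-parts automatically.

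\emph{Steps in order.}
\begin{enumerate}[(1)]
\item Show that $0\le e^x_1\le\cdots\le e^x_k$ and $0\le e^y_1\le\cdots\le e^y_j$ for every $\la\in\sA^{\II}_{j,k}$. This is a local check: compare two consecutive parts of the same label and use strictness together with the gap condition $\la_{m+1}-\la_m\ge 2$ after each $y$-part. Each intervening part of the other label changes the correction terms $b$ or $d$ by exactly the amount the gap conditions guarantee. The check also includes nonnegativity of the smallest excesses.
\item Conversely, start from $(\ga^1,\ga^2)\in\sP_j\times\sP_k$, padded with zeros. Reconstruct $\la$ by merging: decide the relative order of the $x$-parts and $y$-parts, then define each part by inverting the formulas above.
\item Check that the two constructions are mutually inverse.
\end{enumerate}

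\emph{Candidate merging rule for step (2).} I would take the interleaving determined by comparing the quantities $e^x_{a+1}+a$ and $e^y_{c+1}+c$ (or a similar normalized pair) in a greedy two-pointer merge, placing a $y$-part before an $x$-part exactly when the resulting values satisfy the gap conditions. Ties would be broken by a fixed convention, say $x$ first. I would fix the exact comparison by testing small cases: $j=k=1$, where the set $\sA^{\II}_{1,1}$ must have generating function $xyq^{3}/(1-q)^2$, and $j=1$, $k=2$.

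\emph{Main obstacle.} The hard step is showing that this merge is well defined and produces an element of $\sA^{\II}_{j,k}$. Concretely:
\begin{itemize}
\item the reconstructed parts must be distinct;
\item every $y$-part must be followed by a gap of at least $2$;
\item the interleaving read off from the output must be the one used to build it, so that the correction terms $b,d$ are consistent.
\end{itemize}
The risk is a circularity, since $b$ and $d$ depend on the order being constructed. To break it, I would process parts from smallest to largest. At each step I would compute the two candidate next values, $x$: $(a+1)+b+e^x_{a+1}$ and $y$: $(k+2c+1)-d+e^y_{c+1}$, where $d=k-a$ is known once we know how many $x$-parts are already placed. I would then show that the smaller candidate is always the legal choice, and that its selection is forced. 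Uniqueness of this forced choice gives injectivity of the inverse, and together with step (1) this completes the bijection.
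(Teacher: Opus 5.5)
Your excess coordinates are sound, and they are exactly the increments the paper's inverse map reads off. There, restoring the $r$-th $x$-part by backward moves and normalizations gives $X_r=r+a_r+C_r$, with $C_r$ your $b$, so $a_r=e^x_r$. Every normalization raises the crossed $y$-part by $1$, so each $y$-part ends up shifted by the number $d$ of $x$-parts above it, and the paper's $b_s$ is your $e^y_s$. You are therefore writing the paper's bijection in closed form, and step (1) and the double-counting weight identity are correct.

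The gap is step (2), which carries all of the bijectivity; step (1) only shows that $\la\mapsto(\ga^1,\ga^2)$ is well defined into $\sP_j\times\sP_k$. You do not fix the merge, and the first comparison you propose is wrong. Take $k=2$, $j=1$, $\ga^2=(1,1)$, $\ga^1=(1)$. Merging by $e^x_{a+1}+a$ versus $e^y_{c+1}+c$ (ties to $x$) outputs $2_x+3_y+4_x$, which violates the gap after $3_y$. Yet it has the same excess vector as the true preimage $2_x+3_x+4_y$. So the excess map is not injective on arbitrary labelled strict partitions, and injectivity on $\sA^{\II}_{j,k}$ must come from the gap conditions. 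The rule in your last paragraph (compare the actual candidate values, ties to $x$) is the right one, but the claim that it is ``always legal and forced'' is exactly what remains unproved.

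Here is how to close it. At state $(a,c)$, with $a$ parts labelled $x$ and $c$ labelled $y$ already placed, let $u=a+c+1+e^x_{a+1}$ and $v=a+2c+1+e^y_{c+1}$. Place an $x$-part iff $u\le v$ or no $y$-part remains.

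\emph{Legality.} The first part is $\min(u,v)\ge 1$. After placing $u$, the next candidates are $\ge u+1$ and $v+1\ge u+1$. After placing $v$ (so $u\ge v+1$), they are $u+1\ge v+2$ and $\ge v+2$. Hence the output is strictly increasing with a gap of at least $2$ after each $y$-part. Its size order is the placement order, so the recomputed $b,d$ equal $c$ and $k-a$ and the excesses are recovered. This is surjectivity.

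\emph{Forcedness.} Write $X_r,Y_s$ for the $r$-th $x$-part and $s$-th $y$-part of $\la$.
If the next part of $\la$ is an $x$-part and $Y_{c+1}$ has $a+h$ $x$-parts below it ($h\ge1$), then $Y_{c+1}=v+h>X_{a+h}\ge u+h-1$ by monotonicity of $e^x$, so $u\le v$.
If the next part is a $y$-part and $X_{a+1}$ has $c+g$ $y$-parts below it ($g\ge 1$), then $X_{a+1}=u+g$. It is immediately preceded by $Y_{c+g}\ge v+2g-2$, so the gap condition gives $u\ge v+g>v$.
Thus the label of each successive part of $\la$ is dictated by the sign of $u-v$, which is injectivity.

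Compared with the paper's forward moves and adjustments, this static version avoids the collision analysis and makes weight preservation a one-line count. The price is this merge argument.
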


\begin{proof}
We represent all partitions with parts in weakly increasing order and pad the increment partitions with zeros when necessary:
\[
\ga^1 = (b_1, \ldots, b_j), \qquad \ga^2 = (a_1, \ldots, a_k),
\]
where $0 \leq b_1 \leq \cdots \leq b_j$ and $0 \leq a_1 \leq \cdots \leq a_k$. Empty lists are understood when $j=0$ or $k=0$.

First, we describe the forward construction of $\psi^{\II}$. Recall the base partition
\[
\be^{\II, (j, k)} = 1_x + 2_x + \cdots + k_x + (k+1)_y + (k+3)_y + \cdots + (k+2j-1)_y.
\]
We first add $b_s$ to the $s$-th $y$-part of $\be^{\II, (j, k)}$ for each $1 \leq s \leq j$. The resulting $y$-parts are
\[
(k + 2s - 1 + b_s)_y, \qquad 1 \leq s \leq j.
\]
Since $b_{s+1} - b_s \geq 0$, the gap between consecutive $y$-parts satisfies
\[
(k + 2s + 1 + b_{s+1}) - (k + 2s - 1 + b_s) = 2 + (b_{s+1} - b_s) \geq 2,
\]
so the difference conditions among the $y$-parts are preserved. Next, we process the $x$-parts sequentially in decreasing order $r = k, k-1, \ldots, 1$. For the $r$-th $x$-part (initially $r_x$), we perform $a_r$ forward moves using the following local operations:
\begin{description}
\item[Forward move] For an active part $t_x$, an ordinary forward move replaces $t_x$ by $(t+1)_x$, increasing the total weight by $1$.

\item[Adjustment] If the part immediately succeeding $t_x$ is $(t+1)_y$, an ordinary forward move would temporarily produce two parts of value $t+1$. We resolve this by the joint transition
\[
t_x, \ (t+1)_y \longmapsto t_y, \ (t+2)_x.
\]
This complete step increases the total weight by $1$ and moves the active $x$-part past the adjacent $y$-part.
\end{description}
We verify that these forward moves are globally well-defined and never collide with previously placed parts. Suppose parts $(r+1)_x, \ldots, k_x$ have already completed their forward moves, with $(r+1)_x$ now resting at its final position $z$. All $j$ $y$-parts initially resided strictly to the right of $k_x$, and any larger $x$-part ($> r+1$) only shifted $y$-parts to positions strictly greater than $r+1$. Thus, the only $y$-parts currently located between $r$ and $z$ are the $h$ $y$-parts that were crossed by $(r+1)_x$ during its $a_{r+1}$ forward moves. Since each adjustment advances an $x$-part by $2$ while an ordinary step advances it by $1$, the position of $(r+1)_x$ satisfies
\[
z = (r+1) + a_{r+1} + h.
\]
When moving $r_x$ forward, for it to reach or collide with $z$, it must cross all these $h$ intermediate $y$-parts. After $N$ forward steps crossing these $h$ $y$-parts, the position of $r_x$ becomes $r + (N - h) + 2h = r + N + h$. For a collision to occur, we would require $r + N + h \geq z = (r+1) + a_{r+1} + h$, which simplifies to $N \geq a_{r+1} + 1$. Because $a_r \leq a_{r+1} < a_{r+1} + 1$, the active part $r_x$ never reaches $(r+1)_x$, completely precluding collisions between $x$-parts. Local difference conditions with adjacent $y$-parts are verified straightforwardly: before an adjustment, any preceding part has value at most $t-1$ if labeled $x$ and at most $t-2$ if labeled $y$; after the adjustment, the new $y$-part $t_y$ satisfies the required gap from its left neighbor, the gap between $t_y$ and $(t+2)_x$ is $2$, and any subsequent part was originally at least $t+3$ and hence differs from $(t+2)_x$ by at least $1$. Thus, the forward algorithm yields a unique, admissible partition $\la \in \sA_{j, k}^{\II}$.

Conversely, we define the inverse map $(\psi^{\II})^{-1}$. Let $\la \in \sA_{j, k}^{\II}$, and denote its $x$-parts in strictly increasing order by $X_1 < X_2 < \cdots < X_k$. We restore these parts to their minimal positions in the increasing order $r = 1, \ldots, k$. For the $r$-th $x$-part, we repeatedly move it backwards until its value reaches $r$. The operations are:
\begin{description}
\item[Backward move] For an active part $t_x$, an ordinary backward move replaces $t_x$ by $(t-1)_x$, decreasing the total weight by $1$.

\item[Normalization] If the part immediately preceding $t_x$ is $(t-2)_y$, an ordinary backward move would leave an inadmissible gap of $1$ after this $y$-part. Instead, we perform the joint step
\[
(t-2)_y, \ t_x \longmapsto (t-2)_x, \ (t-1)_y.
\]
This is the exact inverse of an adjustment, decreasing the total weight by $1$ and swapping the $x$-part past the $y$-part.
\end{description}
At stage $r$, parts $1_x, \ldots, (r-1)_x$ have already been restored. The active part satisfies $t \geq r$. For an ordinary backward move, $t > r$ implies $t - 1 \geq r$. For a normalization, the preceding part is $(t-2)_y$. If $r = 1$, since all parts are positive integers, $t - 2 \geq 1 = r$. If $r > 1$, the preceding $y$-part must be strictly larger than the restored part $(r-1)_x$, giving $t - 2 \geq r$. Hence, normalization transforms $t_x$ into $(t-2)_x \geq r_x$ and never overshoots the target value $r$. The process terminates deterministically when the active part reaches $r_x$.

Let $a_r$ be the total number of backward steps (counting each normalization as one step) required to restore $X_r$ to $r_x$. We now prove that $a_1 \leq \cdots \leq a_k$. Let $C_r$ be the number of $y$-parts in $\la$ strictly smaller than $X_r$. The restoration of earlier $x$-parts ($s < r$) does not move any $y$-part past $X_r$, as each affected $y$-part increases by $1$ and remains bounded above by $s < X_r$. Once $X_r$ has been restored to $r_x$, all $y$-parts must lie strictly above $r_x$, because parts $1_x, \ldots, r_x$ form a sequence of consecutive integers with gap $1$, leaving no admissible site for a $y$-part (which requires gap at least $2$ to its successor). Therefore, during its restoration, $X_r$ must cross exactly $C_r$ $y$-parts via normalizations. Since each ordinary backward move decreases the value of the $x$-part by $1$ and each normalization decreases it by $2$, we obtain the algebraic invariant
\[
X_r - r = (a_r - C_r) + 2C_r = a_r + C_r \implies X_r = r + a_r + C_r.
\]
For $1 \leq r < k$, let $h = C_{r+1} - C_r \geq 0$ be the number of $y$-parts strictly between $X_r$ and $X_{r+1}$ in $\la$. By the difference conditions of $\sA_{j, k}^{\II}$, the gap between consecutive $y$-parts is at least $2$, and the gap from any $y$-part to its successor is at least $2$. Summing these gaps gives
\[
X_{r+1} - X_r \geq 1 + 2h.
\]
Substituting the algebraic invariant into this inequality, we have
\[
X_{r+1} - X_r = 1 + (a_{r+1} - a_r) + h \geq 1 + 2h \implies a_{r+1} - a_r \geq h \geq 0.
\]
Thus, $\ga^2 = (a_1, \ldots, a_k)$ satisfies $0 \leq a_1 \leq \cdots \leq a_k$, so $\ga^2 \in \sP_k$.

After all $k$ $x$-parts have been restored to $1_x, \ldots, k_x$, the remaining $y$-parts, denoted $Y_1 < \cdots < Y_j$, all lie strictly above $k_x$ and retain mutual gaps of at least $2$. Consequently,
\[
Y_s \geq k + 2s - 1, \qquad 1 \leq s \leq j.
\]
Setting $b_s = Y_s - (k + 2s - 1)$, the condition $Y_{s+1} - Y_s \geq 2$ immediately guarantees $b_{s+1} - b_s \geq 0$, so $\ga^1 = (b_1, \ldots, b_j) \in \sP_j$. Subtracting these increments recovers $\be^{\II, (j, k)}$.

Since the local transitions of forward moves and adjustments are the precise step-by-step inverses of backward moves and normalizations under unambiguous local neighbor conditions, and both algorithms are globally well-defined on their respective domains, $\psi^{\II}$ and $(\psi^{\II})^{-1}$ are two-sided inverses. Each forward step adds $1$ to the weight, and adding the increments $b_s$ adds $|\ga^1|$, while preserving the part labels and counts. We conclude that
\[
|\la| = |\be^{\II, (j, k)}| + |\ga^1| + |\ga^2|, \qquad \ell(\la) = j+k, \qquad \ell_y(\la) = j,
\]
which establishes the desired bijection.
\end{proof}

\begin{example}
Let
\begin{align*}
\be^{\II, (3, 2)} = 1_x + 2_x + 3_y + 5_y + 7_y, \qquad \ga^1 = 2 + 3 + 5, \qquad \ga^2 = 3 + 5.
\end{align*}
Adding $\ga^1$ yields $1_x + 2_x + 5_y + 8_y + 12_y$.
We then apply $\ga^2_2 = 5$ forward moves to $2_x$, followed by $\ga^2_1 = 3$ forward moves to $1_x$:
\begin{align*}
1_x + 2_x + 5_y + 8_y + 12_y 
&\xrightarrow{2\text{ moves of } 2_x} 1_x + \mathbf{4}_x + 5_y + 8_y + 12_y\\
&\xrightarrow{\text{adjustment}} 1_x + 4_y + \mathbf{6}_x + 8_y + 12_y\\
&\xrightarrow{1\text{ moves of } 6_x} 1_x + 4_y + \mathbf{7}_x + 8_y + 12_y\\
&\xrightarrow{\text{adjustment}} 1_x + 4_y + 7_y + \mathbf{9}_x + 12_y\\
&\xrightarrow{2\text{ moves of } 1_x} \mathbf{3}_x + 4_y + 7_y + \mathbf{9}_x + 12_y\\
&\xrightarrow{\text{adjustment}} 3_y + \mathbf{5}_x + 7_y + \mathbf{9}_x + 12_y =: \la.
\end{align*}
Here boldface marks the active $x$-part, and intermediate steps displaying equal numerical parts denote temporary configurations immediately resolved by adjustments. The output $\la = 3_y + 5_x + 7_y + 9_x + 12_y \in \sA_{3, 2}^{\II}$ is admissible and satisfies
\begin{align*}
|\be^{\II, (3, 2)}| + |\ga^1| + |\ga^2| = 18 + 10 + 8 = 36 = 3 + 5 + 7 + 9 + 12 = |\la|.
\end{align*}
\end{example}

\begin{remark}
In the proof of Lemma \ref{lem:sec2-psiII} and the subsequent proofs, we use the combinatorial framework described in \cite{FL241} (the ``base+increments'' machinery). This framework can also be found in the work of Kur\c{s}ung\"{o}z \cite{kur10, kur19, kur21, KS22}, and the same idea appeared earlier in \cite{AG95, bre80, war97}.
\end{remark}

By \eqref{sec2-3} and Lemma \ref{lem:sec2-psiII}, we have
\begin{align}
\sum_{\la\in \sA_{j, k}^{\II}} x^k y^j q^{|\la|} = \frac{x^k y^j q^{j^2+k^2/2+jk+k/2}}{(q; q)_j(q; q)_k}.
\end{align}
Combining this with \eqref{sec2-4} gives
\begin{align}\label{sec2-pair-sum}
\sum_{(\la, \mu)\in \bigcup_{j, k\geq 0}(\sA_{j, k}^{\II}\times \sB_{j, k}^{\II})} x^k y^j (-y)^{\ell(\mu)} q^{|\la|+|\mu|} = \text{LHS of }\eqref{sec2-1}.
\end{align}
We now define an involution to cancel all terms with non-empty $y$-parts.

\begin{lemma}\label{lem:varphi-II}
There exists an involution
\begin{align*}
\varphi^{\II} \colon \bigcup_{j, k\geq 0}(\sA_{j, k}^{\II}\times \sB_{j, k}^{\II}) &\longrightarrow \bigcup_{j, k\geq 0}(\sA_{j, k}^{\II}\times \sB_{j, k}^{\II}),\\
(\la, \mu) &\longmapsto (\hla, \hmu),
\end{align*}
such that $|\la| + |\mu| = |\hla| + |\hmu|$, $\ell(\la) + \ell(\mu) = \ell(\hla) + \ell(\hmu)$, and the number of $x$-parts is preserved. Moreover, when $(\la, \mu)\neq (\hla, \hmu)$, we have $\ell(\mu) = \ell(\hmu) \pm 1$.
\end{lemma}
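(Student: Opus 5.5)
The plan is a sign-reversing involution that moves a single $y$-labeled part between $\la$ and $\mu$. Each pair $(\la,\mu)$ with $\la\in\sA^{\II}_{j,k}$ and $\mu\in\sB^{\II}_{j,k}$ carries the weight $x^k y^{j+\ell(\mu)}(-1)^{\ell(\mu)}q^{|\la|+|\mu|}$. Moving one $y$-part from $\la$ into $\mu$, or conversely, keeps $k$ and $j+\ell(\mu)$ fixed and flips the sign. The intended fixed points are the pairs with $j=0$ and $\mu=\ep$. In that case $\la$ is an arbitrary strict partition with $k$ parts, all labeled $x$, which produces $(-xq;q)_\infty$ and yields \eqref{sec2-1} from \eqref{sec2-pair-sum}. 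So we only need $\varphi^{\II}$ to pair off all other elements.

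\emph{The transfer map.} Let $n=j+k=\ell(\la)$. Suppose $j\ge 1$, and let $Y=\la_m$ be the largest $y$-part of $\la$. By maximality, the parts $\la_{m+1},\dots,\la_n$ are all labeled $x$. Define $\la'$ by deleting $Y$ and decreasing each of $\la_{m+1},\dots,\la_n$ by $1$. Set
\[
c(\la):=Y+(n-m).
\]
Then $|\la|=|\la'|+c(\la)$. Since $\la_{m+1}\ge Y+2$, the shifted $x$-parts remain distinct and exceed the preceding parts. Any gap after a $y$-part of $\la'$ is at least as large as it was in $\la$. Hence $\la'\in\sA^{\II}_{j-1,k}$. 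Moreover, $Y\ge m$ because the parts are strict and positive, so $c(\la)\ge n=\ell(\la')+1$. Thus $c(\la)$ is an admissible new smallest part for a partition in $\sB^{\II}_{j-1,k}$.

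\emph{The involution.} If $j\ge1$ and either $\mu=\ep$ or $c(\la)<\mu_1$, set $\hla=\la'$ and $\hmu=\mu\oplus(c(\la))$. Otherwise $\mu\neq\ep$, and we move $\mu_1$ into $\la$ by the inverse insertion. Put $c=\mu_1$ and $n'=\ell(\la)+1$. We must find the unique position $m$ such that inserting a $y$-part of value $Y=c-(n'-m)$ at position $m$, and raising every later part by $1$, gives an element of $\sA^{\II}_{j+1,k}$. This new part must lie after all existing $y$-parts, and the parts after it must be $x$-parts satisfying the gap-$2$ condition. Existence of such an $m$ should follow from the hypothesis $c\le c(\la)$ (or $j=0$) together with $c\ge \ell(\la)+1$. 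The point is that $m\mapsto c-(n'-m)$ increases by exactly $1$ per step, while the parts of $\la$ increase by at least $1$ per step. Hence there is a first slot where $Y$ fits with gap $\ge 2$ to the shifted successor, and that slot lies to the right of the last $y$-part.

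\emph{The main obstacle.} The delicate step is showing that the two cases are exchanged. I must check three things. First, after removal, the new smallest part $c(\la)$ of $\hmu$ satisfies $c(\la)\le c(\hla)$ whenever $\hla$ still has $y$-parts, so that case two applies to $(\hla,\hmu)$. Second, after insertion, the inserted part becomes the largest $y$-part, with $c(\hla)=\mu_1<\hmu_1$, so that case one applies. Third, removal and insertion are mutually inverse. I would first verify the inequality $c(\la')\ge c(\la)$, which should mean that the quantity $Y+(n-m)$ is monotone along the $y$-parts read from left to right. This should follow from the gap condition $\la_{m+1}-\la_m\ge2$ after $y$-parts, and it also fixes the boundary case of ties. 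If ties obstruct strictness of $\hmu$, I would change the comparison rule to $c(\la)\le\mu_1$ versus $c(\la)>\mu_1$ and recheck. The remaining properties, namely preservation of weight, of the number of $x$-parts, of $\ell(\la)+\ell(\mu)$, and the change $\ell(\hmu)=\ell(\mu)\pm1$, are immediate from the construction.
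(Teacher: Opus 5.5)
There is a genuine gap. With the largest $y$-part and the rule ``remove if $c(\la)<\mu_1$'', the map is not an involution. The check you call ``First'' actually fails in the opposite direction. As you anticipated, $T_a:=\la_a+n-a$ is nondecreasing in $a$, and it increases by at least $1$ immediately after a $y$-part. Hence the largest $y$-part carries the \emph{largest} statistic. Concretely, let $\la_{m'}$ with $m'<m$ be the next-largest $y$-part. Then $\la_m-\la_{m'}\ge (m-m')+1$, so
\[
c(\la')=\la_{m'}+(n-1)-m'\le c(\la)-2<c(\la)=\hmu_1 .
\]
The image therefore lies in your first case again, for example
\[
(1_y+3_y,\ep)\longmapsto(1_y,3)\longmapsto(\ep,1+3).
\]
Dually, in your second case the hypothesis $\mu_1\le c(\la)$ forces the only admissible slot, the first $a$ with $T_a\ge\mu_1$, to lie at or before the last $y$-part. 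So your existence claim fails. For $(3_y,2)$ we have $c(3_y)=3\ge 2$, and the only valid insertion gives $1_y+4_y$, in which the new part is not the largest $y$-part. Changing the tie rule does not help, since these failures are not ties. Reversing the inequality does not help either: then $c(\la)$ would not be the smallest part of $\hmu$, and the insertion step, which reads off $\mu_1$, would not undo the removal.

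The repair is to use the \emph{smallest} $y$-part $\la_p$, whose statistic $r=\la_p+n-p$ is the minimum of $T$ over the $y$-parts. This is the paper's construction.
\begin{itemize}
\item If $r<\mu_1$, delete $\la_p$, lower every later part (of either label) by $1$, and put $r$ into $\mu$. The left neighbour of $\la_p$ is an $x$-part, so strictness suffices there.
\item If $r\ge\mu_1$, or if $\la$ has no $y$-part and $\mu\ne\ep$, take the first $a$ with $T_a\ge\mu_1$, so that $a\le p$. Insert $b=\mu_1-n+a-1$ at position $a$ and raise all parts from $a$ on by $1$. Then $\la_{a-1}<b$ and $(\la_a+1)-b=T_a-\mu_1+2\ge 2$.
\end{itemize}
The exchange of cases now goes the right way. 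After an insertion, the new part has statistic $\mu_1<\hmu_1$, while every old $y$-part has statistic $T_h+1>\mu_1$. After a deletion, the parts before $p$ have statistic $T_h-1<r$. The part following $\la_p$ has statistic $T_{p+1}-1\ge r$, because of the gap of at least $2$ after a $y$-part. Hence the insertion slot is exactly $p$, and $r$ is at most the new minimal $y$-statistic.
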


\begin{proof}
Let $(\la, \mu) \in \sA_{j, k}^{\II}\times \sB_{j, k}^{\II}$ with $\la = \la_1 + \cdots + \la_n$, where $n = j+k$. Let $\mu_1$ denote the smallest part of $\mu$, with the convention $\mu_1 = +\infty$ if $\mu = \ep$. If $\la$ contains at least one $y$-part, let $p$ be the index of its smallest $y$-part and define the removal statistic
\begin{align*}
r = \la_p + n - p.
\end{align*}
If $\la$ contains no $y$-part, set $r = +\infty$. If $r = \mu_1 = +\infty$, we define $(\la, \mu)$ to be a fixed point. Otherwise, the involution is defined according to the relative order of $r$ and $s := \mu_1$.

\begin{description}
\item[Case I] $r < s$. In this case, delete $\la_p$ from $\la$, subtract $1$ from each subsequent part, and retain all labels:
\begin{align*}
\hla_m = \begin{cases}
\la_m, & \text{if } 1\leq m\leq p-1,\\
\la_{m+1} - 1, & \text{if } p\leq m\leq n-1.
\end{cases}
\end{align*}
Insert $r_y$ into $\mu$ as its new smallest part, setting $\hmu = r_y + \mu_1 + \mu_2 + \cdots$.

We verify that $(\hla, \hmu) \in \sA_{j-1, k}^{\II}\times \sB_{j-1, k}^{\II}$. Because $\la_p \geq p$, we have $r = \la_p + n - p \geq n$. The shortened partition $\hla$ has length $n-1$, so the lower bound required for parts in $\sB_{j-1, k}^{\II}$ is $(n-1)+1 = n$, which is satisfied by $r$. Since $r < \mu_1$, $\hmu$ is strictly increasing. In $\hla$, the part $\la_{p-1}$ (if present) is labeled $x$, so strictness $\hla_{p-1} < \hla_p$ suffices. If $\la_{p+1}$ exists, the original gap condition $\la_{p+1} - \la_p \geq 2$ gives $\hla_p - \hla_{p-1} = (\la_{p+1}-1) - \la_{p-1} \geq (\la_p+1) - \la_{p-1} \geq 2$. All subsequent gaps are unchanged, so $\hla \in \sA_{j-1, k}^{\II}$. The change in the weight of $\la$ is $-\la_p - (n-p) = -r$, which precisely offsets the weight added to $\mu$.

\item[Case II] $r \geq s$.
Here $s = \mu_1 < +\infty$. Define the nondecreasing sequence of threshold statistics
\begin{align*}
T_a = \la_a + n - a \quad (1\leq a\leq n), \qquad T_{n+1} = +\infty.
\end{align*}
Since $\la$ is strict, $\la_{a+1} - \la_a \geq 1$, which implies $T_{a+1} \geq T_a$. Let $a \in \{1, \dots, n+1\}$ be the smallest index such that $T_a \geq s$. We insert at position $a$ a new $y$-part of size
\begin{align*}
b = s - n + a - 1,
\end{align*}
add $1$ to each old part at positions $a, \dots, n$ (retaining their labels), and remove $s$ from $\mu$:
\begin{align*}
\hla_m = \begin{cases}
\la_m, & \text{if } 1\leq m\leq a-1,\\
b, & \text{if } m = a,\\
\la_{m-1} + 1, & \text{if } a+1\leq m\leq n+1.
\end{cases}
\end{align*}
When $a = n+1$, the part $b = s$ is appended at the end; this in particular covers the boundary case $\la = \ep$ ($n=0$).

We check that $(\hla, \hmu) \in \sA_{j+1, k}^{\II}\times \sB_{j+1, k}^{\II}$. Since $s \geq n+1$, we have $b \geq (n+1) - n + a - 1 = a \geq 1$. If $a > 1$, the minimality of $a$ gives $T_{a-1} < s$, so $\la_{a-1} < b$. Because the first old $y$-part has statistic $T_p = r \geq s$, we must have $a \leq p$, meaning that no $y$-part precedes position $a$; thus the part $\la_{a-1}$ is labeled $x$, and the strict inequality $\la_{a-1} < b$ is sufficient. If $a \leq n$, the inequality $T_a \geq s$ implies $(\la_a + 1) - b = T_a - s + 2 \geq 2$, fulfilling the required gap following the new $y$-part. All remaining parts of $\hmu$ are strictly greater than $s \geq n+1$, so they are at least $n+2 = (n+1)+1$. The weight added to $\la$ is $b + (n - a + 1) = s$, which cancels the weight removed from $\mu$.

\item[Involutive property]
After the insertion in Case II, the newly inserted part $b_y$ at position $a$ is the smallest $y$-part of $\hla$. Its removal statistic in the enlarged partition of length $n+1$ is
\begin{align*}
b + (n+1) - a = (s - n + a - 1) + n + 1 - a = s.
\end{align*}
Any subsequent $y$-part has new statistic $T_h + 1 > s$. Since the smallest part of $\hmu$ is strictly greater than $s$, the pair $(\hla, \hmu)$ satisfies $r_{\text{new}} = s < \hmu_1$, which places it squarely in Case I. Deletion of the part at position $a$ then precisely reverses the insertion.

Conversely, after a deletion at position $p$ with removed value $r$ in Case I, parts preceding position $p$ have new statistics $T_h - 1 < r$. If a part followed $\la_p$, its new statistic at position $p$ is $(\la_{p+1}-1) + (n-1) - p = T_{p+1} - 1 \geq r$, thanks to the gap $\la_{p+1} - \la_p \geq 2$. If no part followed, the terminal value $T_n = +\infty \geq r$ plays the same role. Thus the minimal index chosen in Case II is precisely the original position $p$. Reinsertion restores both $\la$ and $\mu$.
\end{description}

Each operation transfers one $y$-part between $\la$ and $\mu$, preserving total weight, total length, and the number of $x$-parts while changing $\ell(\mu)$ by $1$. This establishes that $\varphi^{\II}$ is a sign-reversing involution.
\end{proof}

Table \ref{tab:bijection_varphiII_w6} displays the complete action of $\varphi^{\II}$ on all pairs of total weight $6$.
\begin{table}[htbp]
\centering
\renewcommand{\arraystretch}{1.25}
\begin{tabular}{ccc|ccc|c}
\toprule
Case I & & Case II & Case I & & Case II & Fixed points\\
\midrule
$(6_y, \ep)$ & $\longleftrightarrow$ & $(\ep, 6_y)$ & $(1_y, 5_y)$ & $\longleftrightarrow$ & $(\ep, 1_y+5_y)$ & $(6_x, \ep)$\\
$(2_y, 4_y)$ & $\longleftrightarrow$ & $(\ep, 2_y+4_y)$ & $(1_y, 2_y+3_y)$ & $\longleftrightarrow$ & $(\ep, 1_y+2_y+3_y)$ & $(1_x+5_x, \ep)$\\
$(1_y+5_x, \ep)$ & $\longleftrightarrow$ & $(4_x, 2_y)$ & $(2_y+4_x, \ep)$ & $\longleftrightarrow$ & $(3_x, 3_y)$ & $(2_x+4_x, \ep)$\\
$(2_x+4_y, \ep)$ & $\longleftrightarrow$ & $(2_x, 4_y)$ & $(1_x+5_y, \ep)$ & $\longleftrightarrow$ & $(1_x, 5_y)$ & $(1_x+2_x+3_x, \ep)$\\
$(1_x+2_y, 3_y)$ & $\longleftrightarrow$ & $(1_x, 2_y+3_y)$ & $(1_y+5_y, \ep)$ & $\longleftrightarrow$ & $(4_y, 2_y)$ & \\
$(2_y+4_y, \ep)$ & $\longleftrightarrow$ & $(3_y, 3_y)$ & $(1_x+2_x+3_y, \ep)$ & $\longleftrightarrow$ & $(1_x+2_x, 3_y)$ & \\
\bottomrule
\end{tabular}
\vspace{0.5em}
\caption{The involution $\varphi^{\II}$ for total weight $6$.}
\label{tab:bijection_varphiII_w6}
\end{table}

By Lemma \ref{lem:varphi-II}, the non-fixed pairs cancel in pairs in the sum \eqref{sec2-pair-sum}. The fixed points are precisely the pairs with $r = \mu_1 = +\infty$, which are
\begin{align*}
\bigcup_{k\geq 0}\bigl(\sA_{0, k}^{\II}\times \{\ep\}\bigr).
\end{align*}
These correspond bijectively to strict partitions containing only $x$-labeled parts. Consequently,
\begin{align*}
\text{LHS of }\eqref{sec2-1} &= \sum_{(\la, \mu)\in \bigcup_{j, k\geq 0}(\sA_{j, k}^{\II}\times \sB_{j, k}^{\II})} x^k y^j (-y)^{\ell(\mu)} q^{|\la|+|\mu|}\\
&= \sum_{\la\in \bigcup_{k\geq 0}\sA_{0, k}^{\II}} x^k q^{|\la|} = \sum_{\la\in \sD} x^{\ell(\la)} q^{|\la|} = (-xq; q)_{\infty}.
\end{align*}
This completes the combinatorial proof of Theorem \ref{thm:2}.

\section{Combinatorial proof of Theorem \ref{thm:3}}\label{sec:3}

In this section, we consider the joint case: for $E\in \{0, 1\}$, we provide a combinatorial proof for the following three-variable partition identity:
\begin{align}
\sum_{i, j, k\geq 0}\frac{x^{j+k}q^{\frac{1}{2}i^2+3j^2+2k^2+2ij+2ik+4jk+\frac{1}{2}i+E(i+2j)}}{(q; q)_i(q^2; q^2)_j(q^2; q^2)_k}=(-q^{E+1}; q)_{\infty}(-xq^2; q^4)_{\infty}.\label{sec3-1}
\end{align}
Let $\sD^{\geq E+1}$ denote the set of strict partitions $\pi\in \sD$ in which each part of $\pi$ is at least $E+1$, and let $\sD^{(2, 4)}$ denote the set of strict partitions $\al\in \sD$ in which each part is congruent to $2$ modulo $4$. Then the right-hand side of \eqref{sec3-1} generates pairs of such partitions:
\begin{align}\label{sec3-4}
\text{RHS of }\eqref{sec3-1}=\sum_{(\al, \pi)\in \sD^{(2, 4)}\times \sD^{\geq E+1}}x^{\ell(\al)}q^{|\al|+|\pi|}.
\end{align}
Next, we focus our attention on the left-hand side of \eqref{sec3-1}, which can be decomposed combinatorially as follows:
\begin{align}\label{sec3-2}
\text{LHS of }\eqref{sec3-1}=&\sum_{j, k\geq 0}\frac{x^{j+k}q^{[2+\cdots +(4k-2)]+[(4k+2E+3)+\cdots +(4k+2E+6j-3)]}}{(q^2; q^2)_j(q^2; q^2)_k}\nonumber\\
&\qquad\qquad\qquad\qquad\qquad\times \sum_{i\geq 0}\frac{q^{(2j+2k+E+1)+\cdots +(2j+2k+E+i)}}{(q; q)_i}.
\end{align}
For fixed $j, k\geq 0$, let \(\sB^{\III_E}_{j, k}\) denote the set of strict partitions $\mu$ whose parts are all strictly greater than $2j+2k+E$. Then we have
\begin{align}
\sum_{\mu\in \sB^{\III_E}_{j, k}}q^{|\mu|}=\sum_{i\geq 0}\frac{q^{(2j+2k+E+1)+\cdots +(2j+2k+E+i)}}{(q; q)_i}.
\end{align}

For the remaining summation over $j, k$ in \eqref{sec3-2}, we define the base partition
\begin{align}
\beta^{\III_E, (j, k)}:=2+\cdots +(4k-2)+(4k+2E+3)+\cdots +(4k+2E+6j-3).
\end{align}
Direct computation shows that the total weight is $|\beta^{\III_E, (j, k)}|=3j^2+2k^2+4jk+2Ej$, the number of parts is $\ell(\beta^{\III_E, (j, k)})=j+k$, and the number of odd parts is $\o(\beta^{\III_E, (j, k)})=j$. Let $\sE$ denote the set of partitions into even parts and $\sE_{n}:=\{\la\in \sE: \ell(\la)\leq n\}$. Then the left-hand factor satisfies
\begin{align}\label{sec3-3}
\sum_{(\beta^{\III_E, (j, k)}, \ga^1, \ga^2)\in \{\beta^{\III_E, (j, k)}\}\times \sE_j\times \sE_k}x^{j+k}q^{|\beta^{\III_E, (j, k)}|+|\ga^1|+|\ga^2|}=\frac{x^{j+k}q^{3j^2+2k^2+4jk+2Ej}}{(q^2; q^2)_j(q^2; q^2)_k}.
\end{align}

On the other hand, let $\sA^{\III_E}_{j, k}$ denote the set of strict partitions $\la$ with $j$ odd parts and $k$ even parts satisfying the following initial and difference conditions:
\begin{enumerate}[(i)]
\item the smallest even part is at least $2$, and the smallest odd part is at least $3+2E$;
\item the difference between adjacent even parts is at least $4$, and the difference between adjacent odd parts is at least $6$;
\item if an odd part and an even part are adjacent, their difference is at least $5+2E$ when the larger part is odd, and at least $5-2E$ when the larger part is even.
\end{enumerate}
One can easily check that $\beta^{\III_E, (j, k)}$ is the partition with the smallest weight in $\sA^{\III_E}_{j, k}$. We now construct the first bijection of this section.

\begin{lemma}\label{lem:sec3-psiIII}
For $j, k\geq 0$ and $E\in\{0, 1\}$, there exists a weight-preserving bijection
\begin{align*}
\psi^{\III}: \{\beta^{\III_E, (j, k)}\}\times \sE_j\times \sE_k&\longrightarrow \sA^{\III_E}_{j, k}\\
(\beta^{\III_E, (j, k)}, \ga^1, \ga^2)&\longmapsto \la,
\end{align*}
such that $|\beta^{\III_E, (j, k)}|+|\ga^1|+|\ga^2|=|\la|$, $\ell(\beta^{\III_E, (j, k)})=\ell(\la)$, and $\o(\beta^{\III_E, (j, k)})=\o(\la)$.
\end{lemma}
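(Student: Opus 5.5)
Our plan is to follow the ``base+increments'' template of Lemma \ref{lem:sec2-psiII}, with every quantity doubled so that parity is preserved. Write
\[
\ga^1=(2b_1,\dots,2b_j),\qquad \ga^2=(2a_1,\dots,2a_k),
\]
padded with zeros, where $0\le b_1\le\cdots\le b_j$ and $0\le a_1\le\cdots\le a_k$. In $\be^{\III_E,(j,k)}$ the even parts are $2,6,\dots,4k-2$ and the odd parts are $4k+2E+3,\,4k+2E+9,\dots$, so the even parts form the minimal configuration with gaps $4$ and the odd parts the minimal configuration with gaps $6$ lying above them.

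\textbf{Forward map.} We first add $2b_s$ to the $s$-th odd part. This preserves oddness and keeps odd--odd gaps at least $6$; the mixed gap from the top even part $4k-2$ is $2E+5+2b_1\ge 5+2E$. Next we process the even parts in decreasing order $r=k,\dots,1$, applying $a_r$ forward moves to the $r$-th even part. An ordinary forward move adds $2$ to an even part $t$. An \emph{adjustment} is used when the move would violate condition (iii) with the next odd part $u$, i.e.\ when $u-(t+2)<5+2E$. The minimal legal configuration is $u=t+5+2E$, and the jump is
\[
t,\ (t+5+2E)\ \longmapsto\ (t+3-2E),\ (t+8),
\]
with the odd part becoming $t+3-2E$ and the even part becoming $t+8$. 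This raises the weight by exactly $2$, preserves parity counts, and produces gap $5-2E$ with the larger part even, so the even part has passed the odd part. One must check that the new odd part keeps gap $\ge 5-2E$ from a preceding even part, at least $6$ from a preceding odd part, and that $t+8$ keeps gap $\ge 5+2E$ or $\ge 4$ from its successor. This follows from the same ``before the jump'' bounds on neighbours used in Lemma \ref{lem:sec2-psiII}.

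\textbf{Collision analysis.} As in Lemma \ref{lem:sec2-psiII}, we derive an invariant. An even part crossing $h$ odd parts after $N$ steps sits at
\[
t+2N+c\,h
\]
for a fixed shift $c$: each crossing advances the even part by $8$ rather than $2$, so $c=6$. Because $a_r\le a_{r+1}$, the part being processed never comes within $4$ of the already placed $(r+1)$-st even part.

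\textbf{Inverse map.} We restore the even parts in increasing order to $2,6,\dots$. Backward moves subtract $2$, and a normalization is used whenever the preceding odd part sits at the minimal distance $5-2E$; normalization is the exact inverse of the adjustment. Writing $X_r$ for the $r$-th even part and $C_r$ for the number of odd parts below it, we obtain
\[
X_r=(4r-2)+2a_r+6C_r .
\]
Combining this with the gap bounds between $X_r$ and $X_{r+1}$ (at least $4$ if adjacent, and in general at least $(5-2E)+6(h-1)+(5+2E)=6h+4$ when $h$ odd parts intervene) gives $a_{r+1}-a_r\ge 0$. The remaining odd parts then sit above $4k-2$ with gaps $\ge6$ and first part $\ge 4k+2E+3$, which recovers $b_s$.

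\textbf{Main obstacle.} The hardest step is pinning down the adjustment rule so that it is well defined in both directions for both values of $E$. The asymmetric thresholds $5\pm2E$ must make the crossing step a weight-$2$ move and leave no configuration in which neither an ordinary move nor an adjustment is legal. The first thing to do is verify on small cases, with $j=k=1$, that the generating function of $\sA^{\III_E}_{1,1}$ is $q^{|\be|}/(1-q^2)^2$. If the single-jump rule fails there, the fallback is a two-step move: when the gap equals $5+2E+1$, perform an intermediate swap. Still working with $j=k=1$, this means comparing with the sorted pattern $2,3{+}\dots$ to calibrate the constants.
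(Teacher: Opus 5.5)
Your architecture matches the paper's: shift the odd parts by $\ga^1$, push the even parts in the order $r=k,\dots,1$ using adjustments, and undo this in the order $r=1,\dots,k$ using normalizations. However, the ingredient that carries the proof, the adjustment rule, is wrong.

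Your jump $t,\ t+5+2E\mapsto t+3-2E,\ t+8$ changes the weight by $6-4E$. It also leaves the gap $(t+8)-(t+3-2E)=5+2E$, not $5-2E$ as you claim.

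\emph{Case $E=0$.} The jump adds $6$, so one elementary step no longer accounts for $2$ units of $\ga^2$, and weights are skipped. For $j=k=1$, the elements of $\sA^{\III_0}_{1,1}$ of weight $11$ are exactly $2+9$ and $3+8$. Hence $3+8$ must be the image of one forward move on the base $2+7$, but your rule sends $2+7$ to $5+10$.

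\emph{Case $E=1$.} The weight change is correct, but the output is not in minimal position. So the normalization, which fires when an odd part sits exactly $5-2E$ below an even part, does not invert it. Admissibility also fails. From the base $2+9$ you get $3+10$, whose odd part violates the bound $\ge 3+2E=5$. More generally, the new odd part $t+1$ lies only $5<7$ above a preceding even part $t-4$.

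You also misstate the neighbour check: an odd part following an even part needs gap $\ge 5+2E$, not $5-2E$. Finally, your ``fallback'' for a gap of $5+2E+1$ is vacuous, since an even part and an odd part always differ by an odd number.

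The correct step is forced by two requirements: the weight goes up by exactly $2$, and the output is the minimal odd--even configuration $u',\ u'+5-2E$. Solving $2u'+5-2E=2t+7+2E$ gives
\[
t,\ t+5+2E\ \longmapsto\ t+1+2E,\ t+6,
\]
which is exactly the paper's adjustment. Its inverse is the normalization $u,\ u+5-2E\mapsto u-1-2E,\ u+4$.

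With this rule the neighbour checks go through:
\begin{itemize}
\item a preceding even part is $\le t-4$, giving gap $\ge 5+2E$;
\item a preceding odd part is $\le t-5+2E$, giving gap $\ge 6$;
\item the new odd part is $\ge 3+2E$ if it becomes the smallest part;
\item the successor of $t+6$ is $\ge t+11+2E$ if odd, or $\ge t+10$ if even.
\end{itemize}

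Each crossing now advances the even part by $6$ rather than $2$. So your invariant should read $X_r=(4r-2)+2a_r+4C_r$, not $6C_r$. Your bound $X_{r+1}-X_r\ge 6h+4$ then gives $a_{r+1}-a_r\ge h\ge 0$. The rest of your outline (collision avoidance and recovery of the $b_s$) then works as in Lemma \ref{lem:sec2-psiII}.
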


\begin{proof}
The proof proceeds analogously to that of Lemma \ref{lem:sec2-psiII}. Here, we list only the four core operations that are different; the remaining verification details will not be repeated. The map transforms the base partition $\beta^{\III_E, (j, k)}$ into $\la\in \sA_{j, k}^{\III_E}$ by successively adding even increments prescribed by $\ga^1\in \sE_j$ and $\ga^2\in \sE_k$. The elementary forward and backward moves, together with the corresponding adjustments and normalizations on an even part $t$, are defined as follows:
\begin{description}
\item[Forward move] An elementary forward move on an even part $t$ transforms it into $t+2$, increasing its weight by $2$.

\item[Adjustment] If the adjacent part larger than $t$ is an odd part equal to $t+2E+5$, then after one forward move, $t$ becomes $t+2$, and the difference drops to $(t+2E+5)-(t+2)=2E+3 < 2E+5$, which violates condition (iii) of $\sA_{j, k}^{\III_E}$. To restore admissibility, we define the adjustment:
\begin{align*}
&(\text{parts}\leq \max(t-4, t+2E-5)), \mathbf{t+2}, t+2E+5, (\text{parts}\geq t+10)\\
\xri{\text{an adjustment}}{}&(\text{parts}\leq \max(t-4, t+2E-5)), t+2E+1,\mathbf{t+6} , (\text{parts}\geq t+10).
\end{align*}

\item[Backward move] A backward move transforms an even part $t\ge 4$ into $t-2$, decreasing its weight by $2$.

\item[Normalization] If the preceding part smaller than $t$ is an odd part equal to $t+2E-5$, then after a backward move $t\mapsto t-2$, the difference becomes $(t-2)-(t+2E-5)=3-2E < 5-2E$, violating condition (iii). We define the normalization:
\begin{align*}
&(\text{parts}\leq \max(t-10, t+2E-11)), t+2E-5, \mathbf{t-2}, (\text{parts}\geq t+4)\\
\xri{\text{a normalization}}{}&(\text{parts}\leq \max(t-10, t+2E-11)), \mathbf{t-6}, t+2E-1,(\text{parts}\geq t+4).
\end{align*}
\end{description}
The forward moves and backward moves, as well as the adjustments and normalizations, are mutually inverse.  
\end{proof}

\begin{remark}
To provide a complete, self-contained verification that the weight-generating function of $\sA^{\III_E}_{j, k}$ matches \eqref{sec3-3}, we also present an independent proof by parity-word enumeration. Let $n=j+k$, and arrange the parts of any $\la\in \sA^{\III_E}_{j, k}$ in increasing order: $\la_1 < \la_2 < \cdots < \la_n$. Let $w=w_1 w_2 \cdots w_n \in \{\mathrm{e}, \mathrm{o}\}^n$ denote the parity word of $\la$, containing $j$ odd symbols ($\mathrm{o}$) and $k$ even symbols ($\mathrm{e}$). For each word $w$, there is a unique componentwise minimal sequence $b(w)=(b_1,\ldots,b_n)$ satisfying the boundary and difference conditions of $\sA^{\III_E}_{j, k}$. Specifically, $b_1$ is $2$ if $w_1=\mathrm{e}$ and $3+2E$ if $w_1=\mathrm{o}$; for each $1\leq r < n$, the gap $b_{r+1}-b_r$ takes the minimal permissible difference: $4$ for $\mathrm{ee}$, $6$ for $\mathrm{oo}$, $5+2E$ for $\mathrm{eo}$, and $5-2E$ for $\mathrm{oe}$.

Every partition $\la\in \sA^{\III_E}_{j, k}$ with parity word $w$ is uniquely represented as
\[
\la_r=b_r(w)+2h_r\qquad(1\leq r\leq n), \qquad 0\leq h_1\leq h_2\leq\cdots\leq h_n.
\]
Indeed, $h_1\geq 0$ corresponds to the lower bound on the first part, and $h_{r+1}\geq h_r$ is equivalent to the difference condition between $\la_{r+1}$ and $\la_r$. Summing over all non-negative sequences $(h_r)$, the generating function for a fixed parity word $w$ is $q^{|b(w)|}/(q^2;q^2)_n$.

When $w_0=\mathrm{e}^k \mathrm{o}^j$ (all even parts precede all odd parts), the minimal partition is exactly $\beta^{\III_E,(j,k)}$, whose weight is $B=3j^2+2k^2+4jk+2Ej$. An adjacent swap $\mathrm{eo}\mapsto \mathrm{oe}$ at positions $(r, r+1)$ replaces the minimal differences around these positions: direct calculation shows that $b_r$ increases by $1+2E$, $b_{r+1}$ decreases by $2E-1$ (that is, changes by $1-2E$), and all subsequent coordinates $b_{r+2}, \dots, b_n$ remain completely unchanged. Consequently, each such adjacent transposition increases the minimal weight by $(1+2E)+(1-2E)=2$. Therefore,
\[
|b(w)| = B + 2\operatorname{inv}(w),
\]
where $\operatorname{inv}(w)$ is the number of pairs $(r, s)$ with $r<s$ such that $w_r=\mathrm{o}$ and $w_s=\mathrm{e}$. Applying the Gaussian binomial theorem, we obtain
\begin{align*}
\sum_{\la\in\sA^{\III_E}_{j,k}}q^{|\la|}
&=\frac{q^B}{(q^2;q^2)_{j+k}}\sum_{\substack{w\in \{\mathrm{o}^j, \mathrm{e}^k\}}} q^{2\operatorname{inv}(w)}
=\frac{q^B}{(q^2;q^2)_{j+k}}\genfrac{[}{]}{0pt}{}{j+k}{j}_{q^2}
=\frac{q^B}{(q^2;q^2)_j(q^2;q^2)_k}.
\end{align*}
This identity holds for all $j, k\geq 0$ (including $j=0$ or $k=0$). 
\end{remark}

Using \eqref{sec3-3} and Lemma \ref{lem:sec3-psiIII}, we immediately have
\begin{align}
\sum_{\la\in \sA_{j, k}^{\III_E}}x^{\ell(\la)}q^{|\la|}=\frac{x^{j+k}q^{3j^2+2k^2+4jk+2Ej}}{(q^2; q^2)_j(q^2; q^2)_k}.
\end{align}
Combining this with \eqref{sec3-2} shows that
\begin{align}
\sum_{(\la, \mu)\in \bigcup_{j, k\geq 0}(\sA_{j, k}^{\III_E}\times \sB_{j, k}^{\III_E})}x^{\ell(\la)}q^{|\la|+|\mu|}=\text{LHS of }\eqref{sec3-1}.
\end{align}
Our next step is to establish the second bijection from $\bigcup_{j, k\geq 0}(\sA_{j, k}^{\III_E}\times \sB_{j, k}^{\III_E})$ to $\sD^{(2, 4)}\times \sD^{\geq E+1}$, thereby completing the combinatorial proof of Theorem \ref{thm:3}.

\begin{lemma}\label{lem:sec3-varphiIII}
For $E\in \{0, 1\}$, there exists a bijection
\begin{align*}
\varphi^{\III}: \bigcup_{j, k\geq 0}(\sA_{j, k}^{\III_E}\times \sB_{j, k}^{\III_E})& \longrightarrow \sD^{(2, 4)}\times \sD^{\geq E+1}\\
(\la, \mu)&\longmapsto (\al, \pi),
\end{align*}
such that $|\la|+|\mu|=|\al|+|\pi|$ and $\ell(\la)=\ell(\al)$.
\end{lemma}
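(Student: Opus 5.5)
The plan is to reduce Lemma \ref{lem:sec3-varphiIII} to a statement for each fixed total length $n=j+k$, and then split that statement into elementary bijections guided by the generating functions. The factor $\mu\in\sB^{\III_E}_{j,k}$ depends on $(j,k)$ only through $n$, because its parts are exactly the strict parts exceeding $2n+E$. The partition $\pi\in\sD^{\geq E+1}$ splits uniquely as $\pi=\pi^{\le}\oplus\pi^{>}$, where $\pi^{\le}$ has its parts in $[E+1,2n+E]$ and $\pi^{>}$ has its parts $>2n+E$. I will set $\pi^{>}:=\mu$. It then remains to construct, for each $n$, a weight-preserving bijection
\[
\bigcup_{j+k=n}\sA^{\III_E}_{j,k}\;\longrightarrow\;\{\al\in\sD^{(2,4)}:\ell(\al)=n\}\times\{\text{strict }\pi^{\le}\text{ with parts in }[E+1,2n+E]\},
\]
whose generating-function form is
\[
\sum_{j+k=n}\frac{q^{3j^2+2k^2+4jk+2Ej}}{(q^2;q^2)_j(q^2;q^2)_k}=\frac{q^{2n^2}}{(q^4;q^4)_n}(-q^{E+1};q)_{2n}.
\]
This is exactly the coefficient of $x^n$ in \eqref{sec3-1} after extracting the $\mu$-sum. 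I will first double-check this $x^n$-coefficient reduction directly.

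For the left side I will use the parity-word description from the Remark following Lemma \ref{lem:sec3-psiIII}. Each $\la$ with $n$ parts is encoded uniquely by its word $w\in\{\mathrm e,\mathrm o\}^n$ and a sequence $0\le h_1\le\cdots\le h_n$, with $\la_r=b_r(w)+2h_r$ and $|b(w)|=3j^2+2k^2+4jk+2Ej+2\operatorname{inv}(w)$. Subtracting $2n^2$ (the weight of $2+6+\cdots+(4n-2)$), the word contributes the excess $j^2+2Ej+2\operatorname{inv}(w)$. On the right I will write $1/(q^4;q^4)_n=(-q^2;q^2)_n/(q^2;q^2)_n$. The common factor $1/(q^2;q^2)_n$ is then matched with the sequence $(h_r)$, and everything reduces to a finite weighted bijection
\[
\{w\}\times\sD_{\mathrm{even}}[2,2n]\;\longleftrightarrow\;\{\text{strict subsets of }[E+1,2n+E]\},
\]
with weight $j^2+2Ej+2\operatorname{inv}(w)$ on the words. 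Here $\sD_{\mathrm{even}}[2,2n]$ is the set of strict partitions into even parts $\le 2n$, whose generating function $(-q^2;q^2)_n$ absorbs the extra factor coming from rewriting $1/(q^4;q^4)_n$. The partition $\al$ is then assembled as $\al_r=4r-2+4g_r$, where $(g_r)$ is recovered from $(h_r)$ together with the even strict partition by a conjugation/Sylvester-type pairing. This pairing is the bijective form of
\[
\frac{(-q^2;q^2)_n}{(q^2;q^2)_n}=\frac{1}{(q^4;q^4)_n}\cdot\frac{(q^4;q^4)_n(-q^2;q^2)_n}{(q^2;q^2)_n},
\]
where the second factor is identically $1$. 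In practice this is the standard bijection that halves multiplicities, sending a partition into parts $\le 2n$ with even parts and distinct marked parts to a partition with parts divisible by $4$.

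For the finite core I will read $\pi^{\le}$ in pairs $\{E+2m-1,E+2m\}$ for $1\le m\le n$, one pair for each position $m$ of the word. An $\mathrm o$ at position $m$ forces the smaller element $E+2m-1$ of its pair, which for a word $\mathrm e^k\mathrm o^j$ accounts for the excess $j^2+2Ej$; the inversions $2\operatorname{inv}(w)$ shift these chosen elements within the range. Independently, each even part $2m$ of the auxiliary strict even partition toggles the larger element $E+2m$. For $E=0$ the pairs are $\{2m-1,2m\}$ and this is the classical decomposition $(-q;q)_{2n}=(-q;q^2)_n(-q^2;q^2)_n$. The odd elements then encode a $j$-subset of $\{1,3,\dots,2n-1\}$, equivalently the word $w$, and a $j$-subset of odd numbers has weight $j^2+2\operatorname{inv}$ in the Gaussian-binomial sense. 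This matches the formula in the Remark.

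The main obstacle is $E=1$. There the range is $[2,2n+1]$, and the factorization $(-q^2;q)_{2n}=(-q^2;q^2)_n(-q^3;q^2)_n$ interchanges the roles of odd and even elements. The two sides can then no longer be matched by a naive parity split: the even elements must now carry the word and its excess $j^2+2j+2\operatorname{inv}(w)$, while the odd elements absorb $(-q^2;q^2)_n$ only after a shift by $1$ per part. I would first try the shift $m\mapsto m+1$ on the odd subset, compensated by the factor $q^{2Ej}$ already present in the base weight, and check that it gives a genuine bijection. Only if that fails would I fall back on defining $\varphi^{\III}$ by forward and backward moves in the style of Lemma \ref{lem:sec2-psiII}, in which each odd part of $\la$ ejects a part into $\pi$ and becomes the nearest admissible part $\equiv 2\pmod 4$.
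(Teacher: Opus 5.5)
Your reduction to fixed $n=j+k$ (taking $\mu$ to be the parts of $\pi$ larger than $2n+E$) is sound, and so is your encoding of $\la$ by its parity word $w$ and increments $0\le h_1\le\cdots\le h_n$, taken from the Remark after Lemma~\ref{lem:sec3-psiIII}. The composite bijection you are aiming for is a genuinely different route from the paper's single stripping algorithm. As written, however, two steps fail.

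\textbf{The factorization is false.} Since $(q^4;q^4)_n=(q^2;q^2)_n(-q^2;q^2)_n$, one has $1/(q^4;q^4)_n=1/\bigl((q^2;q^2)_n(-q^2;q^2)_n\bigr)$, not $(-q^2;q^2)_n/(q^2;q^2)_n$. Your ``second factor'' is $(-q^2;q^2)_n^2$, not $1$. Hence no weight-preserving bijection can merge $(h_r)$ with a strict even partition into $(g_r)$: already the coefficients of $q^2$ are $2$ and $0$. Yet that merge is what your description of the pairing asks for, and likewise your multiplicity-halving map, which you run from the marked partition to the partition into multiples of $4$. The correct identity is $1/(q^2;q^2)_n=(-q^2;q^2)_n/(q^4;q^4)_n$, so it is $(h_r)$ itself that must be split. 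With $h_0:=0$, put $c_m:=h_{n-m+1}-h_{n-m}=2d_m+\epsilon_m$ with $\epsilon_m\in\{0,1\}$. Set $D:=\{2m:\epsilon_m=1\}$ and define $g$ by $g_{n-m+1}-g_{n-m}:=d_m$; then $2|h|=|D|+4|g|$. With $D$ produced on the $\la$-side in this way, your finite core $\{w\}\times\sD_{\mathrm{even}}[2,2n]\leftrightarrow\{\text{subsets of }\{E+1,\dots,2n+E\}\}$ is exactly the right remaining statement, namely $(-q^2;q^2)_n\sum_w q^{j^2+2Ej+2\operatorname{inv}(w)}=(-q^{E+1};q)_{2n}$.

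\textbf{The $E=1$ ``obstacle'' is a parity misassignment.} No shift and no move-based fallback is needed. By the $q$-binomial theorem,
\[
\sum_w q^{j^2+2Ej+2\operatorname{inv}(w)}=\sum_j q^{j^2+2Ej}\genfrac{[}{]}{0pt}{}{n}{j}_{q^2}=(-q^{1+2E};q^2)_n .
\]
For both values of $E$ this is the generating function for subsets of the \emph{odd} elements of $\{E+1,\dots,2n+E\}$. The even elements are $\{2,4,\dots,2n\}$ in both cases, which is exactly the range of $D$. So the parity split is uniform in $E$: odd elements encode $w$, even elements encode $D$.

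Your rule ``an $\mathrm{o}$ at position $m$ selects $E+2m-1$'' fails in two ways. For $E=1$ it picks an even element. Even for $E=0$ it tracks non-inversions: it sends $\mathrm{e}^k\mathrm{o}^j$ to weight $n^2-k^2$ instead of $j^2$. The right rule is that an $\mathrm{o}$ at position $p$ contributes $2(n-p)+2E+1$. If the $\mathrm{o}$'s of $w$ sit at $p_1<\cdots<p_j$, these values sum to $j^2+2Ej+2\operatorname{inv}(w)$. They are exactly the values $o_p$ removed in Case~I of the paper's stripping map, which acts precisely at the positions of the odd parts of $\la$.

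With both repairs, your composite (parity-word encoding, multiplicity halving, parity split) is a complete bijective proof. It makes the generating-function identity transparent but depends on the inversion count of the Remark. The paper instead checks admissibility step by step for one local algorithm.
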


\begin{proof}
First, we describe the forward map \(\varphi^{\III}\). For a given pair \((\la, \mu)\in\sA_{j, k}^{\III_E}\times \sB_{j, k}^{\III_E}\), we set the initial state as \((\la^{(0)}, \mu^{(0)}):=(\la, \mu)\), and iterate the following stripping algorithm. At step \(i\geq 0\):
\begin{description}
\item[Step 1] Examine the parts of \(\la^{(i)}\) from left to right. If all parts are congruent to \(2\) modulo \(4\), the algorithm terminates and outputs \((\al, \pi):=(\la^{(i)}, \mu^{(i)})\). Otherwise, locate the first part that is not congruent to \(2\) modulo \(4\), and denote its position by \(m\). By minimality, when \(m>1\), the first \(m-1\) parts are all congruent to \(2\) modulo \(4\).

\item[Step 2] We strip \(\la^{(i)}\) according to the parity of \(\la_m^{(i)}\):
\begin{description}
\item[Case I] If \(\la_m^{(i)}\) is odd, subtract \(2E+1\) from \(\la_m^{(i)}\); subtract \(2\) from each of the parts \(\la_{m+1}^{(i)}, \dots, \la_{j+k}^{(i)}\) following position \(m\); and leave the first \(m-1\) parts unchanged, yielding \(\la^{(i+1)}\). The total value stripped from \(\la^{(i)}\) is
\[
o_m=2(j+k-m)+2E+1.
\]
Insert \(o_m\) into \(\mu^{(i)}\) and reorder, obtaining the new strict partition \(\mu^{(i+1)}\).

\item[Case II] If \(\la_m^{(i)}\equiv 0\pmod 4\), subtract \(2\) from each of the parts \(\la_m^{(i)}, \la_{m+1}^{(i)}, \dots, \la_{j+k}^{(i)}\) from position \(m\) onward, and leave the first \(m-1\) parts unchanged, yielding \(\la^{(i+1)}\). The total value stripped from \(\la^{(i)}\) is
\[
e_m=2(j+k-m)+2.
\]
Insert \(e_m\) into \(\mu^{(i)}\) and reorder, obtaining the new strict partition \(\mu^{(i+1)}\).
\end{description}

\item[Termination guarantee] If Case I occurs at position \(m\), \(\la_m^{(i)}\) becomes even after subtracting the odd integer \(2E+1\). If the resulting part is congruent to \(2\) modulo \(4\), position \(m\) is settled; if it is congruent to \(0\) modulo \(4\), position \(m\) remains the first nonconforming position at step $i+1$, immediately triggering Case II. Case II then subtracts \(2\) from \(\la_m\), changing it into a value congruent to \(2\) modulo \(4\). Once position \(m\) is congruent to \(2\) modulo \(4\), all subsequent stripping operations occur strictly at positions $m'>m$. Therefore, each position undergoes Case I at most once and Case II at most once, guaranteeing termination within at most \(2(j+k)\) steps.
\end{description}

We now verify that after each stripping step, \(\la^{(i+1)}\) satisfies the difference conditions of $\sA^{\III_E}$. Since parts before position $m$ are unchanged and parts after position $m$ are all reduced by $2$, the only differences that can change are the left difference \(\la_m^{(i+1)}-\la_{m-1}^{(i+1)}\), the right difference \(\la_{m+1}^{(i+1)}-\la_m^{(i+1)}\), and the lower bound on $\la_1$ (when $m=1$).
\begin{itemize}
\item \emph{Verification for Case I (odd stripping)}:
\begin{description}
\item[Left difference] Since \(\la_{m-1}^{(i)}\equiv 2\pmod 4\) is even and \(\la_m^{(i)}\) is odd, we have \(\la_m^{(i)}-\la_{m-1}^{(i)}\geq 5+2E\). Thus,
\[
\la_m^{(i+1)}-\la_{m-1}^{(i+1)}=(\la_m^{(i)}-(2E+1))-\la_{m-1}^{(i)}\geq 4.
\]
Since both parts are now even, the adjacent even-even difference condition ($\geq 4$) is satisfied.

\item[Right difference] The new difference is
\[
\la_{m+1}^{(i+1)}-\la_m^{(i+1)}=(\la_{m+1}^{(i)}-2)-(\la_m^{(i)}-(2E+1))=(\la_{m+1}^{(i)}-\la_m^{(i)})+2E-1.
\]
If \(\la_{m+1}^{(i)}\) is even, the original difference is at least \(5-2E\), so the new difference is at least \((5-2E)+2E-1=4\). If \(\la_{m+1}^{(i)}\) is odd, the original difference is at least \(6\), so the new difference is at least \(6+2E-1=5+2E\). Both satisfy the requirements of \(\sA^{\III_E}\).

\item[First part] If $m=1$ and $\la_1^{(i)}$ is odd, then $\la_1^{(i)}\geq 3+2E$. Hence $\la_1^{(i+1)}=\la_1^{(i)}-(2E+1)\geq 2$, which is an even integer satisfying the lower bound.
\end{description}

\item \emph{Verification for Case II (even stripping)}:
\begin{description}
\item[Left difference] Here \(\la_{m-1}^{(i)}\equiv 2\pmod 4\) and \(\la_{m}^{(i)}\equiv 0\pmod 4\). Since both are even and their difference cannot be a multiple of $4$, the condition that adjacent even parts differ by at least $4$ implies \(\la_{m}^{(i)}-\la_{m-1}^{(i)}\geq 6\). Therefore,
\[
\la_{m}^{(i+1)}-\la_{m-1}^{(i+1)}=(\la_{m}^{(i)}-2)-\la_{m-1}^{(i)}\geq 4.
\]

\item[Right difference] Since \(2\) is subtracted simultaneously from \(\la_m^{(i)}\) and \(\la_{m+1}^{(i)}\), the difference \(\la_{m+1}^{(i+1)}-\la_m^{(i+1)}=\la_{m+1}^{(i)}-\la_m^{(i)}\) and the parity of $\la_{m+1}$ are preserved. Hence the condition holds automatically.

\item[First part] If $m=1$ and \(\la_1^{(i)}\equiv 0\pmod 4\), then \(\la_1^{(i)}\geq 4\), so \(\la_1^{(i+1)}=\la_1^{(i)}-2\geq 2\).
\end{description}
\end{itemize}

Next, we check that all stripped parts are distinct and strictly smaller than the parts of $\mu$, ensuring \(\pi\in \sD^{\geq E+1}\). The stripped parts are
\[
o_m=2(j+k-m)+2E+1\in \{2E+1, 2E+3, \dots, 2(j+k)+2E-1\},
\]
which are all odd, and
\[
e_m=2(j+k-m)+2\in \{2, 4, \dots, 2(j+k)\},
\]
which are all even. Thus, stripped parts of opposite parities never coincide, and stripped parts of the same parity from distinct positions are strictly distinct. Furthermore, each position undergoes each case at most once. The maximum stripped part satisfies
\[
\max_{1\le m\le j+k}\{o_m, e_m\} = 2(j+k)+E.
\]
Since every part of $\mu\in \sB^{\III_E}_{j, k}$ is strictly greater than $2(j+k)+E$ and the smallest stripped part is at least $E+1$, the combined partition $\pi$ is strict and belongs to $\sD^{\geq E+1}$.

Finally, the total subtracted weight matches the sum of inserted parts, so $|\la|+|\mu|=|\al|+|\pi|$. Moreover, the operations preserve the number of parts: $\ell(\la)=\ell(\al)$.

Conversely, we construct the inverse map \((\varphi^{\III})^{-1}\). Given \((\al, \pi)\in \sD^{(2, 4)}\times \sD^{\geq E+1}\), set $l:=\ell(\al)=j+k$. Extract all parts strictly greater than $2l+E$ from $\pi$ to form $\mu\in \sB^{\III_E}_{j, k}$. The remaining parts of $\pi$ form a subset $R\subseteq \{E+1, \dots, 2l+E\}$. For each $r\in R$:
\begin{itemize}
\item if $r$ is even, setting $m=l-(r-2)/2$ uniquely recovers that Case II occurred at position $m$;
\item if $r$ is odd, setting $m=l-(r-(2E+1))/2$ uniquely recovers that Case I occurred at position $m$.
\end{itemize}
We reconstruct $\la$ by traversing $m$ from $l$ down to $1$ and modifying the current partition successively:
\begin{description}
\item[Step 1] Undo Case II first: if $2(l-m)+2\in R$, add $2$ to each part from position $m$ through $l$ of the current partition.
\item[Step 2] Undo Case I next: if $2(l-m)+2E+1\in R$, add $2E+1$ to the part at position $m$ and add $2$ to each subsequent part from position $m+1$ through $l$ of the current partition.
\end{description}
The verification of difference conditions for the inverse map mirrors the forward verification in reverse. When $m=1$ is completed, we recover $\la\in \sA_{j, k}^{\III_E}$. Since each forward step is inverted deterministically, $\varphi^{\III}$ is a well-defined bijection.
\end{proof}

We conclude this section with an illustrative example of the bijection \(\varphi^{\III}\).

\begin{example}
Set parameters \(E=0\) and \(j=k=2\), so that $l=j+k=4$. Let the initial input be
\[
(\la^{(0)}, \mu^{(0)}):=(\la, \mu)=(5+12+19+28, 12+17)\in \sA_{2, 2}^{\III_0}\times \sB_{2, 2}^{\III_0},
\]
with initial weight $|\la|+|\mu|=64+29=93$. We apply the stripping algorithm:
\begin{description}
\item[Stage 1] In \(\la^{(0)}\), the first part not congruent to \(2\) modulo \(4\) is \(\la_1^{(0)}=5\) (odd). Case I applies at $m=1$: subtract $1$ from $\la_1$ and $2$ from each subsequent part, yielding \(\la^{(1)}=4+10+17+26\). The stripped part is $o_1=2(4-1)+1=7$, so \(\mu^{(1)}=7+12+17\).

\item[Stage 2] In \(\la^{(1)}\), the first nonconforming part is \(\la_1^{(1)}=4\equiv 0\pmod 4\). Case II applies at $m=1$: subtract $2$ from all parts, yielding \(\la^{(2)}=2+8+15+24\). The stripped part is $e_1=2(4-1)+2=8$, so \(\mu^{(2)}=7+8+12+17\).

\item[Stage 3] In \(\la^{(2)}\), the first nonconforming part is \(\la_2^{(2)}=8\equiv 0\pmod 4\). Case II applies at $m=2$: subtract $2$ from parts $2, 3, 4$, yielding \(\la^{(3)}=2+6+13+22\). The stripped part is $e_2=2(4-2)+2=6$, so \(\mu^{(3)}=6+7+8+12+17\).

\item[Stage 4] In \(\la^{(3)}\), the first nonconforming part is \(\la_3^{(3)}=13\) (odd). Case I applies at $m=3$: subtract $1$ from $\la_3$ and $2$ from $\la_4$, yielding \(\la^{(4)}=2+6+12+20\). The stripped part is $o_3=2(4-3)+1=3$, so \(\mu^{(4)}=3+6+7+8+12+17\).

\item[Stage 5] In \(\la^{(4)}\), the first nonconforming part is \(\la_3^{(4)}=12\equiv 0\pmod 4\). Case II applies at $m=3$: subtract $2$ from parts $3, 4$, yielding \(\la^{(5)}=2+6+10+18\). The stripped part is $e_3=2(4-3)+2=4$, so \(\mu^{(5)}=3+4+6+7+8+12+17\).
\end{description}
All parts of \(\la^{(5)}\) are now congruent to \(2\) modulo \(4\), so the algorithm terminates. We obtain
\[
(\al, \pi)=\varphi^{\III}((\la, \mu))=(2+6+10+18, 3+4+6+7+8+12+17)\in \sD^{(2, 4)}\times \sD^{\geq 1}.
\]
The final weights satisfy $|\al|+|\pi|=36+57=93=|\la|+|\mu|$ and $\ell(\al)=4=\ell(\la)$. The inverse procedure retraces these stages in reverse order, uniquely recovering $(\la, \mu)$.
\end{example}

\section{Combinatorial proof of Theorem \ref{thm:4}}\label{sec:4}

In this section, we give a combinatorial proof of the following identity in standard form, from which the two identities in Theorem \ref{thm:4} can be obtained by simple algebraic computations:
\begin{align}\label{sec4-1}
\sum_{i,j,k\geq 0}
\frac{(-1)^jx^iy^{j+k}q^{i^2/2+3j^2+2k^2+2ij+2ik+4jk+k+i/2}}
{(q;q)_i(q^2;q^2)_j(q^2;q^2)_k}
=(-xq;q)_{\infty}.
\end{align}
More precisely, the substitutions used to recover those identities are
\begin{align*}
&\eqref{sec4-1}
\xri{\text{$q\mapsto q^2$ and \(x\mapsto xq^{-1}\)}}
\eqref{eq4-1},\qquad
\eqref{sec4-1}
\xri{\text{\(y\mapsto x^2\), \(-xq\mapsto x\) and \(q\mapsto q^2\)}}
\eqref{eq4-2}.
\end{align*}

The right-hand side of \eqref{sec4-1} has the same partition-theoretic
interpretation as the right-hand side of \eqref{sec1-strict}. We therefore
focus our attention on the left-hand side and decompose it as follows:
\begin{align}\label{sec4-2}
\text{LHS of }\eqref{sec4-1}
={}&\sum_{i\geq 0}\frac{x^iq^{i^2/2+i/2}}{(q;q)_i}
\sum_{j,k\geq 0}
\frac{y^kq^{(2i+3)+\cdots+(2i+4k-1)}}{(q^2;q^2)_k}
\frac{(-y)^jq^{(2i+4k+3)+\cdots+(2i+4k+6j-3)}}{(q^2;q^2)_j}.
\end{align}
Here and below, an empty sum appearing in the exponent is understood to be zero.

We first interpret the terms in the inner double sum combinatorially. For fixed $i\geq 0$,
let \(\sA_{i,k}^{\IV}\) be the set of strict partitions \(\la\) into \(k\)
odd parts whose smallest part is at least \(2i+3\) and whose adjacent parts
differ by at least \(4\). Let \(\sB_{i,j,k}^{\IV}\) be the set of strict
partitions \(\mu\) into \(j\) odd parts whose smallest part is at least
\(2i+4k+3\) and whose adjacent parts differ by at least \(6\).
Writing parts in increasing order, these two sets can be parameterized explicitly as
\begin{align*}
\sA_{i,k}^{\IV}
&:=\{\la\in\sD:\ell(\la)=k,\ 
\la_m=2t_m+2i+4m-1\text{ for }1\leq m\leq k,\
0\leq t_1\leq\cdots\leq t_k\},\\
\sB_{i,j,k}^{\IV}
&:=\{\mu\in\sD:\ell(\mu)=j,\
\mu_m=2s_m+2i+4k+6m-3\text{ for }1\leq m\leq j,\
0\leq s_1\leq\cdots\leq s_j\}.
\end{align*}
Indeed, subtracting the respective minimal possible parts leaves a
nondecreasing sequence of nonnegative even increments. Consequently,
\begin{align}\label{sec4-3}
\sum_{(\la,\mu)\in\sA_{i,k}^{\IV}\times\sB_{i,j,k}^{\IV}}
y^{\ell(\la)}(-y)^{\ell(\mu)}q^{|\la|+|\mu|}
={}&
\frac{y^kq^{(2i+3)+\cdots+(2i+4k-1)}}{(q^2;q^2)_k}
\frac{(-y)^jq^{(2i+4k+3)+\cdots+(2i+4k+6j-3)}}{(q^2;q^2)_j}.
\end{align}

For fixed \(i\geq 0\), we seek an involution
\(\varphi^{\IV'}\) on
\(\bigcup_{j,k\geq 0}
(\sA_{i,k}^{\IV}\times\sB_{i,j,k}^{\IV})\).
As in Section \ref{sec:1}, subtracting the common base \(2i\)
from every part reduces the construction to \(i=0\). We abbreviate
\(\sA_k^{\IV}=\sA_{0,k}^{\IV}\) and
\(\sB_{j,k}^{\IV}=\sB_{0,j,k}^{\IV}\).
It is therefore sufficient to construct the following sign-reversing involution.

\begin{lemma}\label{lem:sec4-varphiIV}
There exists an involution
\begin{align*}
\varphi^{\IV}:
\bigcup_{j,k\geq 0}(\sA_k^{\IV}\times\sB_{j,k}^{\IV})
&\ri
\bigcup_{j,k\geq 0}(\sA_k^{\IV}\times\sB_{j,k}^{\IV})\\
(\la,\mu)&\mapsto(\hla,\hmu),
\end{align*}
such that $|\la|+|\mu|=|\hla|+|\hmu|$ and
$\ell(\la)+\ell(\mu)=\ell(\hla)+\ell(\hmu)$. Moreover, when
$(\la,\mu)\neq(\hla,\hmu)$, we have
$\ell(\mu)=\ell(\hmu)\pm 1$.
\end{lemma}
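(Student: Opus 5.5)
The plan is to model $\varphi^{\IV}$ on the involution $\varphi^{\II}$ of Lemma~\ref{lem:varphi-II}: transfer a single odd part between $\la$ and $\mu$ in either direction. Such a transfer changes $j$ by one, so it reverses the sign $(-1)^{j}$. It also preserves $\ell(\la)+\ell(\mu)$ and the total weight, which accounts for the factor $y^{j+k}$.

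First I will pin down the target fixed-point set. Since $\sum_{i\ge0}x^iq^{i^2/2+i/2}/(q;q)_i=(-xq;q)_\infty$, the inner double sum in \eqref{sec4-2} must equal $1$ after the reduction to the base case (common base $0$). Hence the only fixed point should be $(\ep,\ep)$, and I will define $(\ep,\ep)$ to be fixed.

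Second, I will write down the natural elementary moves and their admissibility conditions. Write $\la=\la_1+\cdots+\la_k$ and $\mu=\mu_1+\cdots+\mu_j$.
\begin{itemize}
\item \emph{Move A} sends the largest part $\la_k$ into $\mu$ as its new smallest part. This is admissible iff $\mu=\ep$ or $\mu_1-\la_k\ge 6$. The lower bound $\la_k\ge 4k-1=4(k-1)+3$ required in $\sB^{\IV}_{j+1,k-1}$ holds automatically.
\item \emph{Move B} sends $\mu_1$ into $\la$ as its new largest part. This is admissible iff $\la=\ep$ or $\mu_1-\la_k\ge 4$. The lower bound $\mu_2\ge\mu_1+6\ge 4(k+1)+3$ for the remaining parts of $\mu$ holds automatically.
\end{itemize}
Since all parts are odd, every gap is even. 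So the top gap $g:=\mu_1-\la_k$ is either exactly $4$, or at least $6$. When $g=4$ only Move B applies; when $g\ge6$ both apply.

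Third, I will resolve this ambiguity with a threshold statistic, as in Case~II of Lemma~\ref{lem:varphi-II}. The naive rule is ``apply A when $g\ge 6$, B when $g=4$''. It fails to be an involution, because after Move A the new top gap $\la_k-\la_{k-1}$ may itself be $\ge 6$. I therefore plan to use the same kind of shifted transfer as in $\varphi^{\II}$, which also changes the parts after the insertion point. Concretely, I will define statistics $T_a=\la_a+2(k-a)$ measuring where a part from $\mu$ could be inserted into $\la$. Inserting $s=\mu_1$ at the smallest index $a$ with $T_a\ge s$, with value $s-2(k-a+1)$, and adding $2$ to every later part of $\la$, preserves weight and turns the $\mu$-gap $6$ into the $\la$-gap $4$ (since $6-2=4$). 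Removal is the reverse operation, applied to the part of $\la$ whose statistic $T_p$ is minimal subject to an admissibility test against $\mu_1$. The two cases are then split by comparing this removal statistic $r$ with $s=\mu_1$, exactly as the cases $r<s$ and $r\ge s$ were split before. After that, I will verify the gap conditions ($4$ inside $\la$, $6$ inside $\mu$, and the bound $\mu_1\ge 4\ell(\la)+3$ after each move) and the two-sided inverse property, following Lemma~\ref{lem:varphi-II} line by line.

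The main obstacle is the third step: choosing the shift amounts so that the gap-$4$ condition in $\la$ and the gap-$6$ condition in $\mu$ line up. They must match so that the two cases are exactly complementary, and so that the index chosen in the insertion case is precisely the one the removal case would undo. I will first calibrate the shifts by hand on small weights, where a table like Table~\ref{tab:bijection_varphiII_w6} can be built, adjusting the offset in $T_a$ (trying $2(k-a)$ first, then $2(k-a)+2$) until the cancellation in the double sum of \eqref{sec4-3} works out.
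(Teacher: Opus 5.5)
Your plan has a genuine gap. The involution is never actually defined, and the family of moves you intend to calibrate cannot produce one.

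\textbf{The dichotomy in your second step fails.} The only constraint linking the two partitions is $\mu_1\ge 4k+3$, while $\la_k$ is unbounded. So the parts of $\la$ and $\mu$ interleave in general, and $g=\mu_1-\la_k$ can be $\le 2$, even negative. For example, $((9),(7))\in\sA_1^{\IV}\times\sB_{1,1}^{\IV}$ has $g=-2$, so neither Move A nor Move B applies; compare $(9,7+15)$ in Table~\ref{tab:bijection_varphiIV}.

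\textbf{More seriously, your third step cannot be made to work.} With the rule ``add $2$ to every later part of $\la$'', weight preservation forces the inserted value to be $s-2(k-a+1)$ and the removed value to be $\la_p+2(k-p)$, whatever offset you put in $T_a$. Take $\la=(3,15)$ and $\mu=(17)$, so $k=2$ and $j=1$.
\begin{itemize}
\item Inserting $17$ at $a=1,2,3$ yields $(13,5,17)$, $(3,15,17)$, $(3,15,17)$, none of which lies in $\sA_3^{\IV}$.
\item Removing $\la_1$ yields $((13),(5,17))$, which violates the lower bound $7$ of $\sB_{2,1}^{\IV}$.
\item Removing $\la_2$ yields $((3),(15,17))$, which has a gap $2<6$.
\end{itemize}
So this sign-$(-1)$ term admits no move of your type, and no calibration of thresholds can pair it. The reason is that the lower bound $4k+3$ of $\sB^{\IV}_{j,k}$ grows by $4$ per part of $\la$, matching the minimal gap in $\la$. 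By contrast, the bound $j+k+1$ of $\sB^{\II}_{j,k}$ grows by $1$. The compensating shifts must therefore be $4$, not $2$, and the analogy with $\varphi^{\II}$ breaks down here.

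\textbf{How the paper proceeds.} It passes to the parameters $\la_m=2t_m+4m-1$ and $\mu_m=2s_m+4k+6m-3$. In these terms the required cancellation for fixed $n=j+k\ge 1$ is just $\sum_j(-1)^jQ^{\binom{j}{2}}\genfrac{[}{]}{0pt}{}{n}{j}_{Q}=(1;Q)_n=0$ with $Q=q^2$. The paper's involution is the classical one comparing the largest parameters:
\begin{itemize}
\item If $k>0$ and either $j=0$ or $t_k\ge s_j+j$, it deletes $\la_k$, lowers every part of $\mu$ by $4$, and appends $2t_k+4k+4j-1$ as the new largest part of $\mu$.
\item Otherwise it deletes $\mu_j$, raises the remaining parts of $\mu$ by $4$, and appends $2s_j+4k+2j+1$ to $\la$.
\end{itemize}
The two conditions are complementary, each image satisfies the opposite condition, and interleaving never matters.

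\textbf{Your Move A/B idea can be rescued,} but only with shifts of $4$ and using the \emph{smallest} part of $\la$:
\begin{itemize}
\item If $k>0$ and either $\mu=\ep$ or $\mu_1\ge\la_1+4k+2$, send $\la_1$ to $\mu$ as its new smallest part $\la_1+4(k-1)$, and lower the other parts of $\la$ by $4$.
\item Otherwise, send $\mu_1$ to $\la$ as its new smallest part $\mu_1-4k$, and raise all old parts of $\la$ by $4$.
\end{itemize}
Parity makes the two conditions complementary. One checks directly that both images are admissible and that the two moves invert each other.
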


\begin{proof}
First, we set \(\varphi^{\IV}(\ep,\ep)=(\ep,\ep)\) as the unique fixed point.
For any nonempty partition pair
\((\la,\mu)\in\sA_k^{\IV}\times\sB_{j,k}^{\IV}\) (so that \(j+k\geq 1\)),
exactly one of the following two cases applies.

\begin{description}
\item[Case I]
Suppose \(k>0\) and either \(j=0\) or \(t_k\geq s_j+j\).
Delete the largest part \(2t_k+4k-1\) of \(\la\) to obtain \(\hla\).
Subtract \(4\) from each part of \(\mu\), and append the new largest part
\[
\hmu_{j+1}=2t_k+4k+4j-1
\]
to obtain \(\hmu\).

We verify that \((\hla,\hmu)\in\sA_{k-1}^{\IV}\times\sB_{j+1,k-1}^{\IV}\).
Clearly, \(\hla\in\sA_{k-1}^{\IV}\). For \(1\leq m\leq j\), the corresponding part of \(\hmu\) is
\[
\mu_m-4 = 2s_m+4(k-1)+6m-3;
\]
thus its parameters remain \(s_1,\ldots,s_j\). The appended part can be rewritten as
\[
\hmu_{j+1} = 2(t_k-j)+4(k-1)+6(j+1)-3.
\]
If \(j>0\), the case hypothesis gives \(t_k-j\geq s_j\geq 0\); if \(j=0\), it yields the nonnegative parameter \(t_k\geq 0\). Hence the complete parameter sequence of \(\hmu\) remains nonnegative and nondecreasing. In particular, when \(j>0\), the required gap at the newly appended part satisfies
\[
\hmu_{j+1}-\hmu_j = 2(t_k-s_j)+6-2j = 2(t_k-s_j-j)+6\geq 6.
\]
When \(k=1\), the remaining partition \(\hla=\ep\) belongs to \(\sA_0^{\IV}\).

The deleted part of \(\la\) has weight \(2t_k+4k-1\). Subtracting \(4\) from the \(j\) old parts of \(\mu\) and appending a part of weight \(2t_k+4k+4j-1\) preserves the total weight:
\[
-(2t_k+4k-1) - 4j + (2t_k+4k+4j-1) = 0.
\]
The numbers of parts change from \((k,j)\) to \((k-1,j+1)\), so their sum is strictly preserved.

\item[Case II]
Suppose \(j>0\) and either \(k=0\) or \(t_k<s_j+j\).
Delete the largest part \(2s_j+4k+6j-3\) of \(\mu\), and add \(4\) to each remaining part of \(\mu\) to obtain \(\hmu\). Append
\[
\hla_{k+1}=2s_j+4k+2j+1
\]
to \(\la\) to obtain \(\hla\).

For \(1\leq m\leq j-1\), we have
\[
\hmu_m=\mu_m+4 = 2s_m+4(k+1)+6m-3.
\]
Thus \(\hmu\in\sB_{j-1,k+1}^{\IV}\), including when \(j=1\) and \(\hmu=\ep\). The appended part of \(\hla\) has the required form
\[
\hla_{k+1} = 2(s_j+j-1)+4(k+1)-1.
\]
Its parameter \(s_j+j-1\) is nonnegative since \(j\geq 1\) and \(s_j\geq 0\). If \(k>0\), the case condition and integrality imply \(t_k\leq s_j+j-1\); hence the parameter sequence of \(\hla\) remains nondecreasing. Equivalently, its final gap satisfies
\[
\hla_{k+1}-\hla_k = 2(s_j-t_k)+2j+2 = 2(s_j+j-1-t_k)+4\geq 4.
\]
When \(k=0\), there is no preceding part to check. Therefore, \((\hla,\hmu)\in\sA_{k+1}^{\IV}\times\sB_{j-1,k+1}^{\IV}\).

The appended part of \(\hla\) has weight \(2s_j+4k+2j+1\), which is \(4(j-1)\) less than the deleted part of \(\mu\). Adding \(4\) to each of the remaining \(j-1\) parts of \(\mu\) compensates exactly for this difference. Total weight is therefore preserved, as is the total number of parts, which changes from \((k,j)\) to \((k+1,j-1)\).
\end{description}

It remains to check that the two operations are mutually inverse.
After Case I, the parameter of the newly appended largest part of \(\hmu\) is \(\hat{s}_{j+1}=t_k-j\). If \(k=1\), the image has \(\hla=\ep\) and \(\ell(\hmu)=j+1\geq 1\), so it falls directly into Case II. If \(k>1\), the largest parameter of \(\hla\) is \(\hat{t}_{k-1}=t_{k-1}\), and
\[
\hat{t}_{k-1}=t_{k-1}\leq t_k < (t_k-j)+(j+1) = \hat{s}_{j+1}+(j+1).
\]
Thus the image again satisfies the condition of Case II. Applying Case II deletes the newly appended part, adds \(4\) back to each of the first \(j\) parts of \(\hmu\), and appends a part to \(\hla\) with parameter
\[
\hat{s}_{j+1}+(j+1)-1 = (t_k-j)+j = t_k,
\]
which completely restores \((\la,\mu)\).

Conversely, after Case II, the parameter of the newly appended largest part of \(\hla\) is \(\hat{t}_{k+1}=s_j+j-1\). If \(j=1\), the image has \(\hmu=\ep\) and \(\ell(\hla)=k+1\geq 1\), which falls directly into Case I. If \(j>1\), its largest \(\hmu\)-parameter is \(\hat{s}_{j-1}=s_{j-1}\), and
\[
\hat{s}_{j-1}+(j-1) = s_{j-1}+j-1 \leq s_j+j-1 = \hat{t}_{k+1}.
\]
It therefore falls into Case I. Applying Case I deletes the newly appended part of \(\hla\), subtracts \(4\) from each of the \(j-1\) parts of \(\hmu\), and appends a part to \(\hmu\) with parameter
\[
\hat{t}_{k+1}-(j-1) = (s_j+j-1)-(j-1) = s_j,
\]
which again completely restores \((\la,\mu)\).

Every nonempty pair belongs to exactly one of the two cases, and each case is mapped to the other. Consequently, \(\varphi^{\IV}\) is an involution. Its only fixed point is \((\ep,\ep)\), and the change in the number of \(\mu\)-parts is precisely \(\pm 1\).
\end{proof}

The involution preserves the total weight and the total number of parts while reversing the sign \((-1)^{\ell(\mu)}\). Since \((\ep,\ep)\) is its unique fixed point, we have
\begin{align*}
\sum_{(\la,\mu)\in\bigcup_{j,k\geq 0}(\sA_k^{\IV}\times\sB_{j,k}^{\IV})}
y^{\ell(\la)}(-y)^{\ell(\mu)}q^{|\la|+|\mu|} = 1.
\end{align*}
Adding \(2i\) to every part preserves the exact same cancellation: each paired pair has the same total number of parts and therefore receives the same additional weight \(2i(\ell(\la)+\ell(\mu))\). It follows from \eqref{sec4-2} that
\begin{align*}
\text{LHS of }\eqref{sec4-1}
=\sum_{i\geq 0}\frac{x^iq^{1+2+\cdots+i}}{(q;q)_i}
=\sum_{\la\in\sD}x^{\ell(\la)}q^{|\la|}
=(-xq;q)_{\infty},
\end{align*}
which completes the combinatorial proof of \eqref{sec4-1}.

We conclude this section with Table \ref{tab:bijection_varphiIV}, which lists all partition pairs of total weight \(31\) under the involution \(\varphi^{\IV}\).

\begin{table}[htbp]
\centering
\renewcommand{\arraystretch}{1.25}
\begin{tabular}{ccc|ccc}
\toprule
Case I & & Case II & Case I & & Case II \\
\midrule
$(31,\ep)$ & $\longleftrightarrow$ & $(\ep,31)$
& $(9,7+15)$ & $\longleftrightarrow$ & $(\ep,3+11+17)$ \\
$(11,7+13)$ & $\longleftrightarrow$ & $(\ep,3+9+19)$
& $(3+13,15)$ & $\longleftrightarrow$ & $(3,11+17)$ \\
$(3+15,13)$ & $\longleftrightarrow$ & $(3,9+19)$
& $(5+13,13)$ & $\longleftrightarrow$ & $(5,9+17)$ \\
$(7+11,13)$ & $\longleftrightarrow$ & $(7,9+15)$
& $(3+17,11)$ & $\longleftrightarrow$ & $(3,7+21)$ \\
$(5+15,11)$ & $\longleftrightarrow$ & $(5,7+19)$
& $(7+13,11)$ & $\longleftrightarrow$ & $(7,7+17)$ \\
$(3+7+21,\ep)$ & $\longleftrightarrow$ & $(3+7,21)$
& $(3+9+19,\ep)$ & $\longleftrightarrow$ & $(3+9,19)$ \\
$(3+11+17,\ep)$ & $\longleftrightarrow$ & $(3+11,17)$
& $(5+9+17,\ep)$ & $\longleftrightarrow$ & $(5+9,17)$ \\
$(5+11+15,\ep)$ & $\longleftrightarrow$ & $(5+11,15)$
& & & \\
\bottomrule
\end{tabular}
\vspace{0.5em}
\caption{The involution $\varphi^{\mathrm{IV}}$ for total weight $31$.}
\label{tab:bijection_varphiIV}
\end{table}

\section{Combinatorial proof of Theorem \ref{thm:5}}\label{sec:5}

First, making the parameter replacements \(x\mapsto -xq^3\) followed by \(q^2\mapsto q\) in \eqref{eq5-1}, and \(x\mapsto -xq\) followed by \(q^2\mapsto q\) in \eqref{eq5-2}, we obtain
\begin{align}
\sum_{i, j, k\geq 0}\frac{(-1)^{j+k}x^{i+2j+k}q^{i^2/2+3j^2+2k^2+2ij+2ik+4jk+i/2-k}}{(q; q)_i(q^2; q^2)_j(q^2; q^2)_k}&=(-xq^2; q^2)_{\infty},\label{sec5-1}\\
\sum_{i, j, k\geq 0}\frac{(-1)^{j+k}x^{i+2j+k}q^{i^2/2+3j^2+2k^2+2ij+2ik+4jk+i/2}}{(q; q)_i(q^2; q^2)_j(q^2; q^2)_k}&=(-xq; q^2)_{\infty}.\label{sec5-2}
\end{align}
Thus, the primary objective of this section is to provide a unified combinatorial proof of the following identity: for each \(E\in \{0, 1\}\),
\begin{align}
\sum_{i, j, k\geq 0}\frac{(-1)^{j+k}x^{i+2j+k}q^{i^2/2+3j^2+2k^2+2ij+2ik+4jk+i/2+(E-1)k}}{(q; q)_i(q^2; q^2)_j(q^2; q^2)_k}&=(-xq^{2-E}; q^2)_{\infty}.\label{sec5-3}
\end{align}
We focus on the series on the left-hand side of \eqref{sec5-3}. First, we decompose the summation into
\begin{align}\label{sec5-4}
\text{LHS of }\eqref{sec5-3}&=\sum_{k\geq 0}\frac{(-1)^kx^kq^{2k^2+(E-1)k}}{(q^2; q^2)_k}\nonumber\\
&\qquad\times\sum_{i\geq 0}\frac{x^{i}q^{(2k+1)+\cdots +(2k+i)}}{(q; q)_i}\sum_{j\geq 0}\frac{(-1)^jx^{2j}q^{(4k+2i+3)+\cdots +(4k+2i+6j-3)}}{(q^2; q^2)_j}.
\end{align}
To interpret the inner double summation combinatorially, we define two families of partitions with distinct parts, denoted by $\sA_{i, k}^{\V}$ and $\sB_{i, j, k}^{\V}$, as follows:
\begin{align*}
\sA_{i, k}^{\V}&:=\left\{\la\in \sD: \ell(\la)=i, \la_m=2k+a_m+m\text{ for }1\leq m\leq i, 0\leq a_1\leq \cdots \leq a_i\right\},\\
\sB_{i, j, k}^{\V}&:=\left\{\mu\in\sD: \ell(\mu)=j, \mu_m=4k+2i+2b_m+6m-3\text{ for }1\leq m\leq j, 0\leq b_1\leq \cdots \leq b_j\right\}.
\end{align*} 
With these definitions, the generating function over $\sA_{i, k}^{\V}\times \sB_{i, j, k}^{\V}$ satisfies
\begin{align}\label{sec5-5}
\sum_{(\la, \mu)\in \sA_{i, k}^{\V}\times \sB_{i, j, k}^{\V}}(-1)^{\ell(\mu)}x^{\ell(\la)+2\ell(\mu)}q^{|\la|+|\mu|}=\frac{x^{i}q^{2ik+i^2/2+i/2}}{(q; q)_i}\cdot\frac{(-1)^jx^{2j}q^{4jk+2ij+3j^2}}{(q^2; q^2)_j}.
\end{align}
Recall that the set \(\sR_{i, k}\), introduced in Section \ref{sec:1}, is given by
\begin{align*}
\sR_{i, k}&:=\left\{\ga\in \sD:
\ell(\ga)=i, \ga_m=2k+r_{m}+m \text{ for }1\leq m\leq i,
0\leq r_1<\cdots <r_{i} \right\}.
\end{align*}
Next, for each fixed \(k\geq 0\), we construct a sign-reversing and weight-preserving involution on \(\bigcup_{i, j\geq 0}(\sA_{i, k}^{\V}\times \sB_{i, j, k}^{\V})\) whose fixed-point set is precisely \(\bigcup_{i\geq 0}(\sR_{i, k}\times \{\ep\})\).

\begin{lemma}\label{lem:sec5-varphiV}
For any fixed $k\geq 0$, there exists an involution 
\begin{align*}
\varphi^{\V}:\bigcup_{i, j\geq 0}(\sA_{i, k}^{\V}\times \sB_{i, j, k}^{\V})&\longrightarrow \bigcup_{i, j\geq 0}(\sA_{i, k}^{\V}\times \sB_{i, j, k}^{\V})\\
(\la, \mu)&\mapsto (\hla, \hmu),
\end{align*}
such that $|\la|+|\mu|=|\hla|+|\hmu|$ and $\ell(\la)+2\ell(\mu)=\ell(\hla)+2\ell(\hmu)$. Moreover, whenever $(\la, \mu)\neq (\hla, \hmu)$, we have $\ell(\mu)=\ell(\hmu)\pm 1$.
\end{lemma}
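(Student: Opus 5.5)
The plan is to imitate the sign-reversing involution $\varphi^{\IV}$ of Lemma \ref{lem:sec4-varphiIV}. The new feature is that one part of $\mu$ is traded for \emph{two} adjacent parts of $\la$, which explains the invariant $\ell(\la)+2\ell(\mu)$.

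\emph{Reduction to $k=0$.} Subtract $2k$ from every part of $\la$ and $4k$ from every part of $\mu$. This changes the weight by $2k(\ell(\la)+2\ell(\mu))$, which is invariant under any map preserving $\ell(\la)+2\ell(\mu)$. So, as in Sections \ref{sec:1} and \ref{sec:4}, it suffices to treat $k=0$. Then $\la_m=a_m+m$ with $0\le a_1\le\cdots\le a_i$, and $\mu_m=2i+2b_m+6m-3$ with $0\le b_1\le\cdots\le b_j$.

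\emph{Candidate pair.} The fixed-point set should be $\mu=\ep$ together with $a$ strictly increasing, i.e.\ $\la\in\sR_{i,0}$. The natural object to move is therefore a \emph{repeat} of $\la$: an index $m$ with $a_m=a_{m+1}$, equivalently $\la_{m+1}-\la_m=1$. I would take $m$ to be the largest such index.

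\textbf{Case I} applies if $\la$ has a repeat and either $j=0$ or $a_m\ge b_j+j$. Delete $\la_m,\la_{m+1}$ and subtract $2$ from every later part of $\la$. This keeps the parameters $a_p$ of those parts, now at positions $p-2$, so $\hla\in\sA^{\V}_{i-2,0}$. It removes weight $2a_m+2m+1+2(i-m-1)=2a_m+2i-1$ from $\la$. Next subtract $4$ from every old part of $\mu$; this is forced because the lower bound $2i+3$ drops by $4$ when $i\mapsto i-2$, and it keeps $b_1,\dots,b_j$. Finally append the new largest part
\[
\hmu_{j+1}=2a_m+2i-1+4j=2(i-2)+2(a_m-j)+6(j+1)-3,
\]
whose parameter is $a_m-j$. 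The case hypothesis $a_m-j\ge b_j$ gives exactly the monotonicity needed for $\hmu\in\sB^{\V}_{i-2,j+1,0}$. Weight is preserved, and $\ell(\la)+2\ell(\mu)$ is preserved.

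\textbf{Case II} applies if $j>0$ and either $\la$ has no repeat or $a_m<b_j+j$. Delete $\mu_j$ and add $4$ to the remaining parts of $\mu$. Insert into $\la$ two adjacent parts with common parameter $a^*=b_j+j-1$. Place them after every part with $a_p\le a^*$, and add $2$ to each part beyond the insertion point. The weight bookkeeping is the reverse of Case I.

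\emph{Involutivity.} After a Case II insertion, the inserted pair is a repeat. Every repeat of the old $\la$ has parameter $<b_j+j$, hence $\le a^*$, so it lies at or before the inserted pair. A run of equal parameters equal to $a^*$ merely extends, and its last two entries are exactly the inserted pair. Thus the largest repeat of $\hla$ is the inserted pair, with parameter $a^*=\hat b_{j-1}+$ something satisfying the Case I inequality because $b_{j-1}+(j-1)\le b_j+j-1$. Case I then undoes the insertion. Conversely, after Case I the remaining repeats have parameter $\le a_m=\hat b_{j+1}+(j+1)-1<\hat b_{j+1}+(j+1)$, so the image falls into Case II. There the insertion position is recovered, since it is placed after all parameters $\le a_m$, which is where the pair was deleted because $m$ was the last repeat. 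The only fixed points are then $\mu=\ep$ with $a$ strictly increasing, i.e.\ $\bigcup_i\sR_{i,k}\times\{\ep\}$ after restoring the shift $2k$.

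\emph{Main obstacle.} I expect the delicate step to be this position bookkeeping: the choice ``last repeat'' on one side must match ``insert after all parameters $\le a^*$'' on the other, including ties where $a^*$ equals existing parameters, and the Case I/Case II boundary must be strict versus non-strict in the right way (it plays the same role as $t_k\ge s_j+j$ in Lemma \ref{lem:sec4-varphiIV}). If ties cause trouble, I would first try to redefine the selected repeat as the first index of the last maximal run, adjusting the inequality accordingly.
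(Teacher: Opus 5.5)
Your involution is correct, but it is a genuinely different construction from the paper's. Both rest on the same idea: trade one part of $\mu$ for a repeated pair $a_m=a_{m+1}$ in $\la$.

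\textbf{The paper's version.} It works at the \emph{small} end.
\begin{enumerate}
\item It takes the \emph{first} repeat $m$ of $a$ and compares $a_m$ with the \emph{smallest} parameter $b_1$ of $\mu$, with no offset.
\item When $a_m\ge b_1$, it removes $\mu_1$ and inserts two copies of $b_1$ in front of all entries $\ge b_1$, setting $\hat b=(b_2+1,\dots,b_j+1)$.
\item The increase $4+2$ coming from $i\mapsto i+2$ and $b\mapsto b+1$ is cancelled exactly by the index shift $s+1\mapsto s$. Hence every other part of $\mu$ stays numerically unchanged ($\hmu_s=\mu_{s+1}$).
\end{enumerate}
So the cancellation literally moves one part of $\mu$ into $\la$ as two consecutive parts.

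\textbf{Your version.} You work at the \emph{large} end, in the style of Lemma~\ref{lem:sec4-varphiIV}: the last repeat, a comparison with the largest part through the offset threshold $b_j+j$, and a global $\mp 4$ shift of the remaining parts of $\mu$. Your verification goes through:
\begin{enumerate}
\item Every repeat of $a$ has value $\le a_m$. So in Case II all old repeats sit weakly before the inserted pair, and the inserted pair is the last repeat of $\hat a$.
\item In Case I, because $m$ is the last repeat, $a_{m+2}>a_m$. Re-insertion ``after all entries $\le a_m$'' therefore lands exactly at positions $m,m+1$.
\end{enumerate}
This means the tie issue you flag as the main obstacle is already resolved by your own choices. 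No redefinition via maximal runs is needed.

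The only sloppy phrase is ``$a^*=\hat b_{j-1}+$ something''. What you actually need is $a^*\ge \hat b_{j-1}+(j-1)$, which is equivalent to $b_j\ge b_{j-1}$. When $j=1$, the image is covered by the $\hat j=0$ clause of Case I.

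\textbf{Comparison.} The paper's version gives a more transparent exchange, since the untraded parts of $\mu$ are untouched. Yours gives uniformity with the involution of Section~\ref{sec:4}. The two produce different but equally valid pairings. For example, at weight $14$ with $k=0$, yours sends $(\ep,3+11)\leftrightarrow(3+4,7)$, while the paper's sends $(\ep,3+11)\leftrightarrow(1+2,11)$.
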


\begin{proof}
Let $(\lambda, \mu) \in \mathcal{A}_{i, k}^{\mathrm{V}} \times \mathcal{B}_{i, j, k}^{\mathrm{V}}$ with coordinate vectors $a = (a_1, \dots, a_i)$ and $b = (b_1, \dots, b_j)$. We define the index of the first adjacent equality in $a$ by
$$m := \min\left(\{ t \in \{1, \dots, i-1\} : a_t = a_{t+1} \} \cup \{ \infty \}\right),$$
with the convention that $a_{\infty} := \infty$. When $j = 0$ (so that $\mu = \ep$), we set $b_1 := \infty$. If $m = \infty$ and $b_1 = \infty$, then $\mu = \ep$ (i.e., $j = 0$) and the sequence $a$ satisfies $0 \le a_1 < a_2 < \dots < a_i$. Setting $r_t := a_t$ for each $1\leq t\leq i$, we see that $\lambda \in \mathscr{R}_{i, k}$. In this case, we define $\varphi^{\mathrm{V}}(\lambda, \ep) := (\lambda, \ep)$.

We divide the remaining configurations into two mutually exclusive cases.
\begin{description}
\item[Case I] $a_m\geq b_1$. In this case, $b_1 < \infty$, which implies $j \ge 1$. We define the insertion index
$$p := \min\left( \{ t \in \{1, \dots, i\} : a_t \ge b_1 \} \cup \{ i+1 \} \right).$$
By the minimality of $m$, the prefix $a_1 < a_2 < \dots < a_{p-1} < b_1$ is strictly increasing (which is empty if $p=1$). We define $(\hat{\lambda}, \hat{\mu})$ via their transformed coordinate vectors $\hat{a}$ and $\hat{b}$:
\begin{align*}
\hat{a} &:= (a_1, \dots, a_{p-1}, b_1, b_1, a_p, \dots, a_i) \in \mathbb{N}_{0}^{i+2}, \\
\hat{b} &:= (b_2 + 1, b_3 + 1, \dots, b_j + 1) \in \mathbb{N}_{0}^{j-1}.
\end{align*}
In terms of partition parts, $\hat{\lambda}$ has length $i + 2$ with
$$\hat{\lambda}_t = 
\begin{cases}
\lambda_t, & 1 \le t \le p-1, \\
2k + b_1 + p, & t = p, \\
2k + b_1 + p + 1, & t = p+1, \\
\lambda_{t-2} + 2, & p+2 \le t \le i+2,
\end{cases}$$
and $\hat{\mu}$ has length $j - 1$ with parts $\hat{\mu}_s = \mu_{s+1}$ for all $1 \le s \le j-1$.

\item[Case II] $a_m<b_1$. In this case, $a_m < \infty$, which implies $m \le i-1$ and $a_m = a_{m+1}$. We define $(\hat{\lambda}, \hat{\mu})$ via the transformed coordinate vectors
\begin{align*}
\hat{a} &:= (a_1, \dots, a_{m-1}, a_{m+2}, \dots, a_i) \in \mathbb{N}_{0}^{i-2}, \\
\hat{b} &:= (a_m, b_1 - 1, b_2 - 1, \dots, b_j - 1) \in \mathbb{N}_{0}^{j+1}.
\end{align*}
In terms of partition parts, $\hat{\lambda}$ has length $i - 2$ with
$$\hat{\lambda}_t = 
\begin{cases}
\lambda_t, & 1 \le t \le m-1, \\
\lambda_{t+2} - 2, & m \le t \le i-2,
\end{cases}$$
and $\hat{\mu}$ has length $j + 1$ with parts
$$\hat{\mu}_1 = 4k + 2a_m + 2i - 1, \qquad \hat{\mu}_{s+1} = \mu_s \quad (1 \le s \le j).$$
\end{description}
We now systematically verify the required properties of $\varphi^{\mathrm{V}}$.
\begin{description}
\item[Well-definedness] In Case I, we have $\hat{a}_p = \hat{a}_{p+1} = b_1$, along with $a_{p-1} < b_1$ (when $p \ge 2$) and $b_1 \le a_p$ (when $p \le i$). Thus $0 \le \hat{a}_1 \le \dots \le \hat{a}_{i+2}$, which ensures $\hat{\lambda} \in \sA_{i+2, k}^{\mathrm{V}}$. For $\hat{b}$, since $0 \le b_1 \le b_2 \le \dots \le b_j$, we have $0 \le b_2 + 1 \le \dots \le b_j + 1$, so $\hat{\mu} \in \sB_{i+2, j-1, k}^{\V}$. Its parts satisfy
\begin{align*}
\hat{\mu}_s &= 4k + 2(i+2) + 2(b_{s+1}+1) + 6s - 3 \\
&= 4k + 2i + 2b_{s+1} + 6(s+1) - 3 = \mu_{s+1},
\end{align*}
showing that all remaining parts of $\mu$ are preserved identically.

In Case II, the minimality of $m$ implies $a_1 < \dots < a_{m-1} < a_m = a_{m+1} \le a_{m+2}$, so $a_{m-1} < a_{m+2}$ (when $m \ge 2$ and $m \le i-2$). Thus $0 \le \hat{a}_1 \le \dots \le \hat{a}_{i-2}$, which ensures $\hat{\lambda} \in \sA_{i-2, k}^{\V}$. For $\hat{b}$, if $j = 0$, we have $\hat{b} = (a_m) \in \mathbb{N}_0^1$. If $j \ge 1$, the integer inequality $a_m < b_1$ implies $a_m \le b_1 - 1$, and hence $0 \le \hat{b}_1 = a_m \le \hat{b}_2 = b_1 - 1 \le \dots \le \hat{b}_{j+1} = b_j - 1$, placing $\hat{\mu} \in \sB_{i-2, j+1, k}^{\V}$. Furthermore, for each $s \ge 1$,
$$\hat{\mu}_{s+1} = 4k + 2(i-2) + 2(b_s - 1) + 6(s+1) - 3 = 4k + 2i + 2b_s + 6s - 3 = \mu_s,$$
so all existing parts are preserved. The newly formed first part is $\hat{\mu}_1 = 4k + 2(i-2) + 2a_m + 3 = 4k + 2a_m + 2i - 1$. When $j \ge 1$,
\begin{align*}
\mu_1 - \hat{\mu}_1 &= (4k + 2i + 2b_1 + 3) - (4k + 2a_m + 2i - 1) \\
&= 2(b_1 - a_m) + 4 \ge 2(1) + 4 = 6,
\end{align*}
which guarantees that $\hat{\mu}_1 < \mu_1$ with a gap of at least $6$.

\item[Weight and statistic invariance] In Case I, the weight gained by $\lambda$ is
\begin{align*}
|\hat{\lambda}| - |\lambda| &= \hat{\lambda}_p + \hat{\lambda}_{p+1} + 2(i - p + 1) \\
&= (2k + b_1 + p) + (2k + b_1 + p + 1) + 2i - 2p + 2 \\
&= 4k + 2i + 2b_1 + 3 = \mu_1.
\end{align*}
Since $|\hat{\mu}| - |\mu| = -\mu_1$, the total weight is strictly invariant: $|\hat{\lambda}| + |\hat{\mu}| = |\lambda| + |\mu|$. The lengths satisfy $\ell(\hat{\lambda}) = i + 2$ and $\ell(\hat{\mu}) = j - 1$, so $\ell(\hat{\lambda}) + 2\ell(\hat{\mu}) = i + 2 + 2(j - 1) = i + 2j$.

In Case II, the weight lost by $\lambda$ is
\begin{align*}
|\lambda| - |\hat{\lambda}| &= \lambda_m + \lambda_{m+1} + 2(i - m - 1) \\
&= (2k + a_m + m) + (2k + a_m + m + 1) + 2i - 2m - 2 \\
&= 4k + 2a_m + 2i - 1 = \hat{\mu}_1.
\end{align*}
Since $|\hat{\mu}| - |\mu| = \hat{\mu}_1$, the total weight is invariant: $|\hat{\lambda}| + |\hat{\mu}| = |\lambda| + |\mu|$. The lengths satisfy $\ell(\hat{\lambda}) = i - 2$ and $\ell(\hat{\mu}) = j + 1$, which gives $\ell(\hat{\lambda}) + 2\ell(\hat{\mu}) = i - 2 + 2(j + 1) = i + 2j$.

\item[Involution property ($\varphi^{\V}\circ\varphi^{\V}=\mathrm{id}$)] Suppose $(\lambda, \mu)$ falls into Case I. In $\hat{a}$, all entries strictly preceding index $p$ are strictly increasing, and $\hat{a}_p = \hat{a}_{p+1} = b_1$. Hence, the first adjacent equality index in $\hat{a}$ is $\hat{m} = p$, with value $\hat{a}_{\hat{m}} = b_1$. The first coordinate of $\hat{b}$ is $\hat{b}_1 = b_2 + 1 > b_1 = \hat{a}_{\hat{m}}$ (if $j \ge 2$) or $\hat{b}_1 = \infty > \hat{a}_{\hat{m}}$ (if $j = 1$). In either scenario, $\hat{a}_{\hat{m}} < \hat{b}_1$, so $(\hat{\lambda}, \hat{\mu})$ unconditionally falls into Case II. Applying Case II removes the two entries at positions $\hat{m}, \hat{m}+1$ (recovering the vector $a$) and sets the new $b$-coordinates to $(\hat{a}_{\hat{m}}, \hat{b}_1 - 1, \dots) = (b_1, (b_2+1)-1, \dots) = b$. Therefore, $\varphi^{\V}(\hat{\lambda}, \hat{\mu}) = (\lambda, \mu)$.

Now suppose $(\lambda, \mu)$ falls into Case II. In $\hat{a}$, there are no adjacent equal pairs before index $m$. If $m \le i-3$ and $a_{m+2} = a_{m+3}$, the first adjacent equality index satisfies $\hat{m} \ge m$ with $\hat{a}_{\hat{m}} \ge a_{m+2} \ge a_m$; if no adjacent equal pair exists in $\hat{a}$, we have $\hat{m} = \infty$. In either case, $\hat{a}_{\hat{m}} \ge a_m = \hat{b}_1$, so $(\hat{\lambda}, \hat{\mu})$ unconditionally falls into Case I. In Case I, the insertion index is $\min\{ t : \hat{a}_t \ge \hat{b}_1 \} = m$ because $\hat{a}_{m-1} = a_{m-1} < a_m = \hat{b}_1 \le a_{m+2} = \hat{a}_m$ (when $m \ge 2$ and $m \le i-2$; if $m=1$, the lower inequality is vacuous; if $m = i-1$, the insertion occurs after $\hat{a}_{i-2}$). Thus, two copies of $\hat{b}_1 = a_m$ are inserted back at positions $m, m+1$ (recovering $a$), and the $b$-coordinates become $(\hat{b}_2 + 1, \dots) = ((b_1-1)+1, \dots) = b$. Hence $\varphi^{\V}(\hat{\lambda}, \hat{\mu}) = (\lambda, \mu)$.
\end{description}
Consequently, $\varphi^{\V}$ is an exact involution whose fixed points are precisely $\bigcup_{i \ge 0} (\sR_{i, k} \times \{ \ep \})$, completing the proof.
\end{proof}

To illustrate the mechanism of $\varphi^{\V}$, Table \ref{tab:bijection_varphiV} displays the complete classification of all $34$ configurations of total weight $14$ when $k=0$, consisting of $11$ pairs and $12$ fixed points.

\begin{table}[htbp]
\centering
\renewcommand{\arraystretch}{1.25}
\begin{tabular}{ccc|c}
\toprule
Case I & & Case II & Fixed Points \\
\midrule
$(9, 5)$ & $\longleftrightarrow$ & $(1+2+11, \ep)$ & $(14, \ep)$ \\
$(7, 7)$ & $\longleftrightarrow$ & $(2+3+9, \ep)$ & $(1+13, \ep)$ \\
$(5, 9)$ & $\longleftrightarrow$ & $(3+4+7, \ep)$ & $(2+12, \ep)$ \\
$(3, 11)$ & $\longleftrightarrow$ & $(3+5+6, \ep)$ & $(3+11, \ep)$ \\
$(1, 13)$ & $\longleftrightarrow$ & $(1+6+7, \ep)$ & $(4+10, \ep)$ \\
$(1+6, 7)$ & $\longleftrightarrow$ & $(1+2+3+8, \ep)$ & $(5+9, \ep)$ \\
$(2+5, 7)$ & $\longleftrightarrow$ & $(1+2+4+7, \ep)$ & $(6+8, \ep)$ \\
$(3+4, 7)$ & $\longleftrightarrow$ & $(1+2+5+6, \ep)$ & $(1+3+10, \ep)$ \\
$(1+4, 9)$ & $\longleftrightarrow$ & $(1+3+4+6, \ep)$ & $(1+4+9, \ep)$ \\
$(2+3, 9)$ & $\longleftrightarrow$ & $(2+3+4+5, \ep)$ & $(1+5+8, \ep)$ \\
$(\ep, 3+11)$ & $\longleftrightarrow$ & $(1+2, 11)$ & $(2+4+8, \ep)$ \\
& & & $(2+5+7, \ep)$ \\
\bottomrule
\end{tabular}
\vspace{0.5em}
\caption{The involution $\varphi^{\mathrm{V}}$ for total weight $14$ when $k=0$.}
\label{tab:bijection_varphiV}
\end{table}

Applying the involution $\varphi^{\V}$ established in Lemma \ref{lem:sec5-varphiV} to the summation in \eqref{sec5-5}, all non-fixed pairs cancel mutually, leaving
\begin{align}
\sum_{(\la, \mu)\in \bigcup_{i, j\geq 0}(\sA_{i, k}^{\V}\times \sB_{i, j, k}^{\V})}(-1)^{\ell(\mu)}x^{\ell(\la)+2\ell(\mu)}q^{|\la|+|\mu|}=\sum_{\ga\in \bigcup_{i\geq 0}\sR_{i, k}}x^{\ell(\ga)}q^{|\ga|}=\sum_{i\geq 0}\frac{x^iq^{i^2+2ik}}{(q; q)_i}.
\end{align}
Substituting this reduction into \eqref{sec5-4}, the target identity \eqref{sec5-3} reduces to
\begin{align}\label{sec5-reduced}
\sum_{i, k\geq 0}\frac{(-1)^kx^{i+k}q^{i^2+2k^2+2ik+(E-1)k}}{(q; q)_i(q^2; q^2)_k}=(-xq^{2-E}; q^2)_{\infty}.
\end{align}
A purely combinatorial proof of the case $E=1$ in \eqref{sec5-reduced} was established via a sign-reversing involution in \cite[Eq. (1.3)]{FL241}. We also combinatorially proved a parameterized identity in \cite[Eq. (1.7)]{FL241}
\begin{align*}
\sum_{i,j\ge 0} \frac{q^{i^2+2ij+2j^2-i-j}}{(q;q)_i(q^2;q^2)_j} x^i y^{2j} = (y;q)_\infty \sum_{j\ge 0} \frac{(-x/y;q)_j}{(q;q)_j(y;q)_j} q^{\binom{j}{2}} y^j.
\end{align*}
By a basic algebraic computation, one can obtain the case \(E=0\) in \eqref{sec5-reduced}. For the detailed computation, one may refer to \cite[Thm.~3.8]{LW24}. Thus, building on our previous framework, the combinatorial proof of Theorem \ref{thm:5} is complete.

It is worth noting that the involution $\varphi^{\V}$ in Lemma \ref{lem:sec5-varphiV}, when specialized to $k=0$, yields the following identity with parameter $x$:
\begin{align*}
\sum_{i, j\geq 0}\frac{(-1)^jx^{i+2j}q^{i^2/2+3j^2+2ij+i/2}}{(q; q)_i(q^2; q^2)_j}=\sum_{i\geq 0}\frac{x^iq^{i^2}}{(q; q)_i}.
\end{align*}
Setting $x=1$ and $x=q$, respectively, and applying the two classical Rogers-Ramanujan identities, we obtain two double-sum Rogers-Ramanujan type identities.

\section{Combinatorial proof of Theorem \ref{thm:9}}\label{sec:6}

Recall that $\sR_{i,k}$ is defined in Section \ref{sec:1}. We set $\sR_i:=\sR_{i,0}$ and $\sR:=\bigcup_{i\geq 0}\sR_i$. Thus $\sR$ is the set of partitions whose adjacent parts differ by at least $2$.
For the right-hand side of \eqref{eq9}, we have
\begin{align}\label{sec7-1}
\sum_{(\al,\pi)\in\sR\times\sD}
x^{\ell(\pi)}y^{\ell(\al)}q^{|\al|+|\pi|}
=\text{RHS of }\eqref{eq9}
=(-xq;q)_{\infty}\sum_{n\geq 0}\frac{y^nq^{n^2}}{(q;q)_n}.
\end{align}
We now consider the series on the left-hand side of \eqref{eq9}, which can be written as
\begin{align}\label{sec7-2}
\text{LHS of }\eqref{eq9}
=\sum_{i,j\geq 0}
\frac{x^iy^{i+j}q^{3i^2/2+j^2+2ij+i/2}}
{(q;q)_i(q;q)_j}
\sum_{k\geq 0}
\frac{x^kq^{(i+j+1)+\cdots+(i+j+k)}}{(q;q)_k}.
\end{align}

First, we interpret the inner sum. For fixed $i,j\geq 0$, let $\sB_{i,j}^{\VI}$ denote the set of strict partitions $\mu$ whose parts are labeled $x$ and whose smallest part, if any, is at least $i+j+1$. Then
\begin{align}\label{sec7-3}
\sum_{\mu\in\sB_{i,j}^{\VI}}x^{\ell(\mu)}q^{|\mu|}
=\sum_{k\geq 0}
\frac{x^kq^{(i+j+1)+\cdots+(i+j+k)}}{(q;q)_k}.
\end{align}

Next, we consider the outer sum of \eqref{sec7-2}. Define the base partition
\begin{align*}
\beta^{\VI,(i,j)}
:=1_y+3_y+\cdots+(2j-1)_y
 +(2j+2)_{xy}+(2j+5)_{xy}+\cdots+(2j+3i-1)_{xy},
\end{align*}
where a bracket is empty when its corresponding parameter is zero. Since $|\beta^{\VI,(i,j)}| = 3i^2/2+j^2+2ij+i/2$, we have
\begin{align}\label{sec7-4}
\sum_{(\beta^{\VI,(i,j)},\ga^1,\ga^2)\in\{\beta^{\VI,(i,j)}\}\times\sP_i\times\sP_j}
x^iy^{i+j}q^{|\beta^{\VI,(i,j)}|+|\ga^1|+|\ga^2|}
=
\frac{x^iy^{i+j}q^{3i^2/2+j^2+2ij+i/2}}
{(q;q)_i(q;q)_j}.
\end{align}

Let $\sA_{i,j}^{\VI}$ denote the set of partitions $\la\in\sR$ with $i$ parts labeled $xy$ and $j$ parts labeled $y$, subject to the following additional condition: if $\la_m$ is labeled $xy$, then $\la_m-\la_{m-1}\geq 3$ when $m>1$, and $\la_1\geq 2$ when $m=1$ (formally, $\la_m - \la_{m-1} \ge 3$ with $\la_0 = -1$). One can easily check that $\beta^{\VI, (i, j)}$ is the partition with the smallest weight in $\sA^{\VI}_{i, j}$. We obtain the following bijection.

\begin{lemma}\label{lem:sec7-psiVI}
For $i,j\geq 0$, there exists a bijection
\begin{align*}
\psi^{\VI}:
\{\beta^{\VI,(i,j)}\}\times\sP_i\times\sP_j
&\ri\sA_{i,j}^{\VI}\\
(\beta^{\VI,(i,j)},\ga^1,\ga^2)&\mapsto\la,
\end{align*}
such that $|\beta^{\VI,(i,j)}|+|\ga^1|+|\ga^2|=|\la|$, $\ell(\beta^{\VI,(i,j)})=\ell(\la)$, and $\ell_y(\beta^{\VI,(i,j)})=\ell_y(\la)$.
\end{lemma}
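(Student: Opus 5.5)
We follow the ``base+increments'' scheme of Lemma \ref{lem:sec2-psiII}. Write $\ga^2=(b_1,\dots,b_j)$ and $\ga^1=(a_1,\dots,a_i)$ in weakly increasing order, padded with zeros. The base $\be^{\VI,(i,j)}$ lists the $y$-parts $1,3,\dots,2j-1$ (consecutive gaps $2$) followed by the $xy$-parts $2j+2,2j+5,\dots$ (gaps $3$, and a gap $3$ from the last $y$-part).

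\textbf{Step 1: the $xy$-increments.} Add $a_s$ to the $s$-th $xy$-part. Because $a_{s+1}\ge a_s$, the $xy$-gaps stay at least $3$, so the result lies in $\sA^{\VI}_{i,j}$ with all $y$-parts still below all $xy$-parts.

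\textbf{Step 2: the $y$-increments.} Process the $y$-parts in the order $s=j,j-1,\dots,1$, moving the $s$-th $y$-part forward $b_s$ times.
\begin{itemize}
\item An \emph{ordinary forward move} is $t_y\mapsto(t+1)_y$. It is used when the successor is a $y$-part at distance at least $3$, or an $xy$-part at distance at least $4$.
\item An \emph{adjustment} is used when the successor is $(t+3)_{xy}$, where an ordinary move would violate the gap $3$ before an $xy$-part. We then perform the joint step
\[
t_y,\ (t+3)_{xy}\longmapsto (t+1)_{xy},\ (t+3)_y,
\]
which raises the weight by $1$ and lets the active $y$-part cross the $xy$-part. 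One checks that the predecessor of $(t+1)_{xy}$ is at most $t-2$, so the gap $\ge 3$ holds, and the new gap $(t+3)-(t+1)=2$ is admissible. Since the $xy$-part lost $2$, it must be that the original successor of $(t+3)_{xy}$ is at least $t+5$ or $t+6$; this is to be checked.
\end{itemize}

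\textbf{Step 3: no collisions.} As in Lemma \ref{lem:sec2-psiII}, if the $(s+1)$-st $y$-part crossed $h$ $xy$-parts during its $b_{s+1}$ moves, its position is $2s+1+b_{s+1}+h$. The $s$-th $y$-part, after $N$ moves crossing the same $h$ parts, sits at $2s-1+N+h$. Since $N\le b_s\le b_{s+1}$, it stays at least $2$ below, so the gap $2$ between $y$-parts survives.

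\textbf{Step 4: the inverse.} Restore the $y$-parts $Y_1<\dots<Y_j$ in increasing order by \emph{backward moves} and \emph{normalizations}
\[
(t-2)_{xy},\ t_y\longmapsto (t-3)_y,\ (t-1)_{xy},
\]
the exact inverse of an adjustment. The invariant
\[
Y_s = 2s-1+b_s+C_s,
\]
where $C_s$ is the number of $xy$-parts below $Y_s$, together with the gap conditions, gives
\[
Y_{s+1}-Y_s\ge 2+h\quad\Longrightarrow\quad b_{s+1}\ge b_s .
\]
Here each intervening $xy$-part contributes at least $3$ to the gap while the crossing costs only $2$. The remaining $xy$-parts then sit above $2j-1$ with gaps at least $3$, and subtracting the base recovers $\ga^1$.

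\textbf{Main obstacle.} The main difficulty is the local admissibility of the adjustment and normalization, in particular the gap after the relocated $xy$-part. We must show that when $(t+3)_{xy}$ has been pulled down to $(t+1)_{xy}$ and $(t+3)_y$ now follows it, the next part satisfies the required difference. This may require a two-step adjustment, or an adjustment defined differently, for example $t_y,(t+3)_{xy}\mapsto (t+1)_{xy},(t+4)_y$ paired with a compensating move.

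If the local moves become too messy, we will fall back on the parity-word style argument from the Remark after Lemma \ref{lem:sec3-psiIII}. We fix the label word $w\in\{y,xy\}^{i+j}$ and compute the minimal partition for $w$. Each swap of an adjacent pair ($y$ followed by $xy$ becoming $xy$ followed by $y$) raises the minimal weight by exactly $1$, and the $q$-binomial theorem then yields $q^{|\be|}/((q;q)_i(q;q)_j)$. This can then be converted into an explicit bijection by pairing increments with inversions.
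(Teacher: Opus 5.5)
Your main route is the paper's own: add $\ga^1$ to the $xy$-parts, push the $y$-parts from right to left with the adjustment $t_y,(t+3)_{xy}\mapsto(t+1)_{xy},(t+3)_y$, and invert by backward moves. Your adjustment is exactly the paper's. The problems come from copying the crossing arithmetic of Lemma~\ref{lem:sec2-psiII}, where the active part jumps by $2$ and the crossed part moves by $1$. Here the active $y$-part jumps by $3$ and the crossed $xy$-part drops by $2$.

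\textbf{The normalization is wrong.} Your step $(t-2)_{xy},t_y\mapsto(t-3)_y,(t-1)_{xy}$ is \emph{not} the inverse of the adjustment. It lowers the weight by $2$ and leaves an $xy$-part only $2$ above its predecessor, so it exits $\sA^{\VI}_{i,j}$. Hence the inverse you describe does not undo the forward map. The correct step, which is the paper's backward adjustment, is $(t-2)_{xy},t_y\mapsto(t-3)_y,t_{xy}$.

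\textbf{The bookkeeping is off.} Your positions must read $2s+1+b_{s+1}+2h$ and $2s-1+N+2h$, and the invariant must be $Y_s=2s-1+b_s+2C_s$. The collision argument survives, since the difference is still $2+b_{s+1}-N\ge 2$. Monotonicity now needs $Y_{s+1}-Y_s\ge 2+2h$. This holds because the true lower bound is $2+3h$: a gap $\ge 3$ before each of the $h$ intervening $xy$-parts and $\ge 2$ before $Y_{s+1}$. This gives $b_{s+1}-b_s\ge h\ge 0$.

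\textbf{The ``main obstacle'' is not one.} You should not modify the adjustment; your alternative ending in $(t+4)_y$ is unnecessary. The adjustment keeps the value $t+3$ in the second slot and only changes its label from $xy$ to $y$. The gap required before the next part $w$ depends only on the label of $w$. So the conditions $w\ge t+5$ (if $w$ is a $y$-part) or $w\ge t+6$ (if $w$ is an $xy$-part) are exactly those that already held. Together with your predecessor check ($u\le t-2$, hence $(t+1)-u\ge 3$), this completes the local verification. With these corrections your main route is the paper's proof, written out in more detail.

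\textbf{Your fallback is sound.} A swap $(y,xy)\to(xy,y)$ raises the minimal weight by $3-2=1$. Hence the generating function of $\sA^{\VI}_{i,j}$ is
$q^{|\beta^{\VI,(i,j)}|}\genfrac{[}{]}{0pt}{}{i+j}{i}_{q}/(q;q)_{i+j}=q^{|\beta^{\VI,(i,j)}|}/((q;q)_i(q;q)_j)$,
exactly as in the Remark after Lemma~\ref{lem:sec3-psiIII}. Since $\ell$ and $\ell_y$ are constant on $\sA^{\VI}_{i,j}$, this already gives the existence of the required weight-preserving bijection. However, ``pairing increments with inversions'' is not by itself an explicit bijection.
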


\begin{proof}
The proof proceeds analogously to that of Lemma \ref{lem:sec2-psiII}. The partition $\ga^1\in\sP_i$ acts on the $i$ parts labeled $xy$: since consecutive $xy$ parts in $\beta^{\VI,(i,j)}$ differ by at least $3$, adding the parts of $\ga^1$ maintains differences of at least $3$ between consecutive $xy$ parts. The partition $\ga^2\in\sP_j$ is incorporated by successive forward moves on the $y$ parts.

Specifically, a forward move increases a part $t_y$ by $1$. When a forward move of $t_y$ would result in a difference of less than $3$ with a subsequent part $(t+3)_{xy}$, an adjustment is made according to the rule:
\begin{align*}
t_y,\ (t+3)_{xy} \longmapsto (t+1)_{xy},\ (t+3)_y.
\end{align*}
This adjustment increases the total weight by $1$, swaps the relative ordering of the two labels, and preserves all difference conditions: the new part $(t+1)_{xy}$ differs from the preceding part by at least $3$ (since the former difference was at least $2$), and the new part $(t+3)_y$ differs from $(t+1)_{xy}$ by $2$, which is admissible for a $y$ part. The remaining moves, including the treatment of any resulting interaction with neighboring parts, follow the procedure in Lemma \ref{lem:sec2-psiII}.

The inverse map is obtained by backward moves and backward adjustments: a backward move replaces $t_y$ by $(t-1)_y$, and whenever a configuration $(t+1)_{xy}, (t+3)_y$ is encountered, the backward adjustment reverses the move:
\begin{align*}
(t+1)_{xy},\ (t+3)_y \longmapsto t_y,\ (t+3)_{xy},
\end{align*}
followed by the standard normalization described in Lemma \ref{lem:sec2-psiII}. This establishes the bijection $\psi^{\VI}$, preserving total weight and label lengths.
\end{proof}

Using \eqref{sec7-4} and Lemma \ref{lem:sec7-psiVI}, we obtain
\begin{align*}
\sum_{\la\in\sA_{i,j}^{\VI}}
x^{\ell_{xy}(\la)}y^{\ell(\la)}q^{|\la|}
=
\frac{x^iy^{i+j}q^{3i^2/2+j^2+2ij+i/2}}
{(q;q)_i(q;q)_j}.
\end{align*}
Combining this identity with \eqref{sec7-3} gives
\begin{align}\label{sec7-5}
\sum_{i,j\geq 0}
\sum_{(\la,\mu)\in\sA_{i,j}^{\VI}\times\sB_{i,j}^{\VI}}
x^{\ell_{xy}(\la)+\ell(\mu)}
y^{\ell(\la)}q^{|\la|+|\mu|}
=\text{LHS of }\eqref{eq9}.
\end{align}

Our next step is to construct a bijection from $\bigcup_{i,j\geq 0}(\sA_{i,j}^{\VI}\times\sB_{i,j}^{\VI})$ to $\sR\times\sD$, thereby completing the combinatorial proof of Theorem \ref{thm:9}.

\begin{lemma}\label{lem:sec7-varphiVI}
There exists a bijection
\begin{align*}
\varphi^{\VI}:
\bigcup_{i,j\geq 0}
(\sA_{i,j}^{\VI}\times\sB_{i,j}^{\VI})
&\ri\sR\times\sD\\
(\la,\mu)&\mapsto(\al,\pi),
\end{align*}
such that $|\la|+|\mu|=|\al|+|\pi|$, $\ell_y(\la)=\ell(\al)$, and $\ell(\mu)+\ell_{xy}(\la)=\ell(\pi)$.
\end{lemma}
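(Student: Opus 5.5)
Since $\ell(\la)=i+j$ and every part of $\mu$ is at least $i+j+1=\ell(\la)+1$, I will model $\varphi^{\VI}$ on the stripping algorithm $\varphi^{\III}$ of Lemma~\ref{lem:sec3-varphiIII}. The idea is to strip one unit of "excess" from each $xy$-labeled part of $\la$, turning it into a $y$-part. This leaves an $\sR$-partition $\al$ of length $i+j$, and the stripped amounts become $i$ distinct small parts, at most $\ell(\la)$, which merge with $\mu$ into a strict partition $\pi$. The statistics then match as required: all parts of $\al$ are $y$-parts, so $\ell(\al)=\ell_y(\la)$, and $\ell(\pi)=\ell(\mu)+i$.

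Concretely, I will scan $\la$ from left to right. Let $n=\ell(\la)$ and let $m$ be the position of the first $xy$-part. Its condition $\la_m-\la_{m-1}\ge 3$ (with $\la_0=-1$) means $\la_m$ can be lowered by $1$ while keeping gaps $\ge 2$. I will subtract $1$ from each of $\la_m,\la_{m+1},\dots,\la_n$ and relabel $\la_m$ as $y$. This removes $n-m+1$ from the weight, and I insert the part $n-m+1$ into $\mu$.

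I then need to check that admissibility is preserved. The new $y$-part has gap $\ge 2$ to its left neighbour, and at least $2$ to the first part when $m=1$, since $\la_1\ge 2$ gives $\la_1-1\ge 1$. All later gaps are unchanged, so every remaining $xy$-part keeps its gap $\ge 3$. Iterating over the $xy$-parts in increasing order of position, each position contributes a distinct value $n-m+1\in\{1,\dots,n\}$. These values are strictly smaller than every part of $\mu$, so $\pi$ is strict, and weight is conserved step by step.

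For the inverse, given $(\al,\pi)\in\sR\times\sD$ with $n=\ell(\al)$, I will split $\pi$ into its parts $\le n$ and the rest ($\mu$). Each small part $r$ determines a position $m=n-r+1$. Processing these positions from right to left (largest $m$ first, mirroring the forward order in reverse), I add $1$ to $\al_m,\dots,\al_n$ and relabel $\al_m$ as $xy$. This gives gap $\ge 3$ on its left, or $\la_1\ge 2$ if $m=1$, since $\al$ has gaps $\ge 2$ and $\al_1\ge 1$. All later gaps are unchanged, and the $xy$-parts already restored to the right keep their gaps.

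The step needing the most care is showing the two procedures are mutually inverse independently of order. I expect this to be easy here, because each operation at position $m$ shifts a suffix uniformly and never alters a gap except at $m$ itself. So the operations at different positions commute, and the resulting map is simply
\[
\al_t=\la_t-\#\{\text{$xy$-parts at positions}\le t\}.
\]
The inverse is explicit in the same way. The only genuine check is the boundary case $m=1$ together with the convention $\la_0=-1$, which I will verify directly.
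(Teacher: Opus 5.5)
Your proposal is correct and is essentially the paper's proof. The sequential stripping collapses, as you observe, to the closed formula $\al_r=\la_r-\#\{m: a_m\le r\}$ with stripped parts $c_m=n-a_m+1$, which is exactly how the paper defines $\varphi^{\VI}$ directly, and your inverse (splitting $\pi$ at $n=\ell(\al)$ and adding back the suffix counts) is the same as the paper's. One small fix: $\ell(\al)=\ell_y(\la)$ holds because $xy$-parts also carry the label $y$, so $\ell_y(\la)=i+j=\ell(\al)$; it does not follow from the parts of $\al$ being $y$-parts.
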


\begin{proof}
Let $(\la,\mu)\in\sA_{i,j}^{\VI}\times\sB_{i,j}^{\VI}$, with $n = \ell(\la) = i+j$. If $n=0$, then $\la$ and $\al$ are empty, $i=0$, and $\pi = \mu$, so the statement holds trivially.

Now assume $n\ge 1$. Write $\la=\la_1 + \la_2 + \cdots + \la_n$, and let the $i$ parts labeled $xy$ occur at indices
\[
1 \le a_1 < a_2 < \cdots < a_i \le n.
\]
For each $1\leq m\leq i$, define
\[
c_m := n - a_m + 1.
\]
Since $1\leq a_1 < a_2 < \cdots < a_i \le n$, the integers $c_m$ are distinct and satisfy
\[
n \ge c_1 > c_2 > \cdots > c_i \ge 1.
\]
On the other hand, by definition of $\sB_{i,j}^{\VI}$, the partition $\mu$ is strict and all of its parts satisfy $\mu_r \geq n+1$. Therefore, the union $\{c_1, \ldots, c_i\} \cup \{\mu_1, \ldots, \mu_{\ell(\mu)}\}$ consists of distinct positive integers. Arranging these parts in strictly increasing order defines a strict partition $\pi\in\sD$ of length
\[
\ell(\pi) = i + \ell(\mu) = \ell_{xy}(\la) + \ell(\mu).
\]

Next, define $\al = (\al_1, \ldots, \al_n)$ by setting, for each $1\leq r\leq n$,
\[
\al_r := \la_r - \#\{m : a_m \leq r\}.
\]
We verify that $\al\in\sR$:
\begin{enumerate}
\item For the first part $\al_1$: if $\la_1$ is labeled $xy$, then $\la_1 \ge 2$ by definition of $\sA_{i,j}^{\VI}$, and hence $\al_1 = \la_1 - 1 \ge 1$. If $\la_1$ is labeled $y$, then $\al_1 = \la_1 \ge 1$. Thus $\al_1 \ge 1$ in all cases.
\item For consecutive parts: for each $2\leq r\leq n$,
\[
\al_r - \al_{r-1} = (\la_r - \la_{r-1}) - \big(\#\{m : a_m \leq r\} - \#\{m : a_m \leq r-1\}\big).
\]
If $r = a_m$ for some $m$ (so $\la_r$ is labeled $xy$), then $\la_r - \la_{r-1} \ge 3$, while the difference between the two counts is $1$. Thus $\al_r - \al_{r-1} \ge 3 - 1 = 2$.
If $r \notin \{a_1, \ldots, a_i\}$ (so $\la_r$ is labeled $y$), then $\la_r - \la_{r-1} \ge 2$, while the difference between the two counts is $0$. Thus $\al_r - \al_{r-1} \ge 2 - 0 = 2$.
\end{enumerate}
Therefore, $\al$ is a partition whose consecutive parts differ by at least $2$, so $\al\in\sR$, with length $\ell(\al) = n = \ell_y(\la)$.

For the weight conservation, observe that subtracting $1$ from each part $\la_r$ for $r \ge a_m$ decreases the sum of parts of $\la$ by $n - a_m + 1 = c_m$. Summing over all $1\le m\le i$, we obtain
\[
|\al| = |\la| - \sum_{m=1}^i c_m.
\]
Since the parts of $\pi$ are the disjoint union of $\{c_1, \ldots, c_i\}$ and the parts of $\mu$, we have $|\pi| = |\mu| + \sum_{m=1}^i c_m$. Adding these two equalities yields
\[
|\al| + |\pi| = |\la| + |\mu|.
\]

To show that $\varphi^{\VI}$ is a bijection, we construct its inverse map. Let $(\al, \pi) \in \sR \times \sD$, and set $n = \ell(\al)$. If $n=0$, then $\la$ is empty and $\mu = \pi \in \sB_{0,0}^{\VI}$. For $n \ge 1$, split $\pi$ into the parts at most $n$ and the parts strictly greater than $n$:
\[
\{c \in \pi : c \le n\} = \{c_1 > c_2 > \cdots > c_i\}, \qquad \mu = \{c \in \pi : c \ge n+1\}.
\]
Since $\pi\in\sD$, the parts of $\mu$ are distinct and all at least $n+1 = i+j+1$, where $j := n - i \ge 0$, so $\mu \in \sB_{i,j}^{\VI}$. For each $1\le m\le i$, define $a_m := n - c_m + 1$. Because $n \ge c_1 > \cdots > c_i \ge 1$, we have $1 \le a_1 < \cdots < a_i \le n$.
We then define $\la = \la_1 + \cdots + \la_n$ by
\[
\la_r := \al_r + \#\{m : a_m \le r\},
\]
and label $\la_r$ with $xy$ if $r \in \{a_1, \dots, a_i\}$, and with $y$ otherwise.
Since $\al\in\sR$, $\al_r - \al_{r-1} \ge 2$. Thus $\la_r - \la_{r-1} \ge 2$ when $\la_r$ is labeled $y$, and $\la_r - \la_{r-1} \ge 2 + 1 = 3$ when $\la_r$ is labeled $xy$. If $\la_1$ is labeled $xy$, then $\la_1 = \al_1 + 1 \ge 2$. Hence $\la \in \sA_{i,j}^{\VI}$.
These two maps are clearly mutually inverse, establishing that $\varphi^{\VI}$ is a well-defined bijection.
\end{proof}

\begin{example}
In this example, we give the complete one-to-one correspondence under the bijection \(\varphi^{\VI}\) at weight $5$, as shown in Table \ref{tab:varphiVI}.

\begin{table}[htbp]
\centering
\renewcommand{\arraystretch}{1.25}
\begin{tabular}{ccc|ccc}
\toprule
 $(\lambda, \mu)$ & & $(\al, \pi)$ & $(\lambda, \mu) $ & & $(\al, \pi)$\\
\midrule
 $(\epsilon, 1+4)$         & $\longleftrightarrow$ & $(\ep, 1+4)$ & $(\ep, 2+3)$                & $\longleftrightarrow$ & $(\ep, 2+3)$  \\
 $(\ep, 5)$               & $\longleftrightarrow$ & $(\ep, 5)$ & $(1_y, 4)$           & $\longleftrightarrow$ & $(1, 4)$ \\
 $(2_{xy}, 3)$               & $\longleftrightarrow$ & $(1, 1+3)$ & $(2_y, 3)$ & $\longleftrightarrow$ & $(2, 3)$  \\
 $(3_{xy}, 2)$               & $\longleftrightarrow$ & $(2, 1+2)$ & $(3_y, 2)$            & $\longleftrightarrow$ & $(3, 2)$  \\
 $(5_{xy}, \ep)$                & $\longleftrightarrow$ & $(4, 1)$ & $(5_y, \ep)$            & $\longleftrightarrow$ & $(5, \ep)$  \\
 $(1_y+4_{xy},\ep)$                & $\longleftrightarrow$ & $(1+3, 1)$  & $(1_y+4_y, \ep)$            & $\longleftrightarrow$ & $(1+4, \ep)$  \\
\bottomrule
\end{tabular}
\vspace{0.5em}
\caption{The bijection $\varphi^{\mathrm{VI}}$ for total weight $5$.}
\label{tab:varphiVI}
\end{table}
\end{example}

By Lemma \ref{lem:sec7-varphiVI}, the sum on the left-hand side of \eqref{sec7-5} becomes
\begin{align*}
\sum_{(\al,\pi)\in\sR\times\sD}
x^{\ell(\pi)}y^{\ell(\al)}q^{|\al|+|\pi|}.
\end{align*}
This coincides with the generating function on the left-hand side of \eqref{sec7-1}. Since the right-hand sides of \eqref{sec7-5} and \eqref{sec7-1} are the left-hand and right-hand sides of \eqref{eq9}, respectively, we conclude that both sides of \eqref{eq9} are equal, which completes the combinatorial proof of Theorem \ref{thm:9}.

\section{Combinatorial proof of Theorem \ref{thm:8}}\label{sec:7}

In this section, first we investigate the combinatorial decomposition of the set 
$\sD$ of strict partitions. We define a deterministic multi-stage reduction algorithm that extracts difference-$t$ pairs 
for $t = 1, \dots, m$, establish its well-definedness, and characterize the combinatorial decomposition induced by it. Then we give a combinatorial proof of Theorem \ref{thm:8} by proving a more general multi-parameter generating function.

\subsection{Multi-stage peeling algorithm and combinatorial decomposition}
\label{subsec:6-1}
First we fix an integer $m \ge 0$ representing the peeling depth. When $m = 0$, no stages are 
performed. We now give an iterative definition of the algorithm.

\begin{Def}\label{def:6-1}
Let $m \ge 1$. For an initial partition $\lambda \in \sD$, initialize $\lambda^{(0)} = \lambda$. 
For each successive stage $t = 1, 2, \dots, m$, the partition $\lambda^{(t)}$ is constructed from 
$\lambda^{(t-1)}$ by the following three steps:
\begin{description}
    \item[Maximal difference-$t$ sequence decomposition] 
    Decompose $\lambda^{(t-1)}$ into maximal contiguous sequences 
    $B = (c_1, c_2, \dots, c_r)$ such that $c_{j+1} - c_j = t$ for all $1 \le j < r$.
    
    \item[Left-to-right pairing] 
    Within each sequence $B = (c_1, \dots, c_r)$, form disjoint consecutive pairs from left to right:
    \[
    [c_1, c_2]_t, \; [c_3, c_4]_t, \; \dots, \; [c_{2p-1}, c_{2p}]_t, \quad \text{where } p = \lfloor r/2 \rfloor.
    \]
    These $p$ pairs are designated as peeled at stage $t$. If the sequence length $r = 2p+1$ is odd, the final part $c_r$ is designated as a surviving part. If $r = 2p$ is even, all parts in $B$ are peeled. Let $\pr_t(\lambda)$ denote the total number of pairs peeled across all sequences at stage $t$.

    \item[Dynamic right-pair shift] 
    For each surviving part $a$, let $R_t(a)$ denote the number of pairs peeled at stage $t$ 
    that lie strictly to the right of $a$ in $\lambda^{(t-1)}$. Define the shifted part
    \begin{equation}
    \label{eq:shift_rule}
    \tau_t(a) = a + 2t R_t(a).
    \end{equation}
    The sequence of shifted surviving parts, ordered strictly increasingly, forms the partition $\lambda^{(t)}$.
\end{description}
After $m$ stages, the surviving parts of $\lambda^{(m)}$ are designated as singletons. 
We define the singleton statistic $\sn_m(\lambda) = \ell(\lambda^{(m)})$ and the multi-stage peeling signature
\begin{equation}
\sigma_m(\lambda) = \bigl(\sn_m(\lambda); \, \pr_1(\lambda), \pr_2(\lambda), \dots, \pr_m(\lambda)\bigr) \in \mathbb{N}_0^{m+1}.
\end{equation}
\end{Def}

Now we give an example with \(m=3\) to fully appreciate the flow of this algorithm.
\begin{example}\label{ex:4232}
Let $m=3$, and consider the strict partition
\[
\lambda = \lambda^{(0)} = (1, 5, 6, 7, 8, 10, 12, 14, 16, 18, 20, 22, 25, 28, 31, 34, 38, 42) \in \sD,
\]
with total length $\ell(\lambda) = 18$.

\begin{description}
\item[Stage 1 ($t=1$)]
In $\la^{(0)}$, the unique maximal difference-$1$ sequence of length greater than $1$ is $(5, 6, 7, 8)$, pairing within $(5, 6, 7, 8)$ yields $\pr_1(\la) = 2$ pairs: $$[5, 6]_1 \quad \text{and} \quad [7, 8]_1.$$ Because the sequence has even length $4$, neither element survives from it.
The surviving set is $\mathcal{S}^{(1)} = \{1, 10, 12, 14, 16, 18, 20, 22, 25, 28, 31, 34, 38, 42\}$.
Both peeled pairs lie to the right of $1$, so $R_1(1) = 2$, while $R_1(a) = 0$ for all $a \ge 10$. Applying the dynamic shift $\tau_1(a) = a + 2 R_1(a)$, the resulting partition is
\[
\lambda^{(1)} = (5, 10, 12, 14, 16, 18, 20, 22, 25, 28, 31, 34, 38, 42),
\]
which has $\ell(\lambda^{(1)}) = 14$ parts and satisfies $\lambda^{(1)}_{j+1} - \lambda^{(1)}_j \ge 2$ for all $j$.

\item[Stage 2 ($t=2$)]
In $\lambda^{(1)}$, the unique maximal difference-$2$ sequence of length greater than $1$ is $(10, 12, 14, 16, 18, 20, 22)$, which has odd length $7$. Pairing from left to right yields $\pr_2(\la) = 3$ pairs:
\[
[10, 12]_2, \quad [14, 16]_2, \quad [18, 20]_2,
\]
leaving the final element $22$ as a survivor. Then we have the surviving set is $\mathcal{S}^{(2)} = \{5, 22, 25, 28, 31, 34, 38, 42\}$. All three peeled pairs lie to the right of $5$, giving $R_2(5) = 3$, whereas $R_2(a) = 0$ for all $a \ge 22$.
Applying $\tau_2(a) = a + 4 R_2(a)$, the resulting partition is
\[
\lambda^{(2)} = (17, 22, 25, 28, 31, 34, 38, 42),
\]
which has $\ell(\lambda^{(2)}) = 8$ parts and satisfies $\lambda^{(2)}_{j+1} - \lambda^{(2)}_j \ge 3$ for all $j$.

\item[Stage 3 ($t=3$)]
In $\lambda^{(2)}$, the unique maximal difference-$3$ sequence of length greater than $1$ is $(22, 25, 28, 31, 34)$, of odd length $5$.
Pairing yields $\pr_3(\la) = 2$ pairs:
\[
(22, 25) \quad \text{and} \quad (28, 31),
\]
leaving $34$ as a survivor.
The surviving set is $\mathcal{S}^{(3)} = \{17, 34, 38, 42\}$.
Both peeled pairs lie to the right of $17$, so $R_3(17) = 2$, while $R_3(a) = 0$ for all $a \geq 34$. Applying $\tau_3(a) = a + 6 R_3(a)$, the final partition is
\[
\lambda^{(3)} = (29, 34, 38, 42),
\]
which has $\sn_3(\la)=\ell(\lambda^{(3)}) = 4$ parts and satisfies $\lambda^{(3)}_{j+1} - \lambda^{(3)}_j \ge 4$ for all $j$.

\item[Conclusion]
The multi-stage peeling algorithm terminates with signature
\[
\sigma_3(\lambda) = (\sn_3(\la); \pr_1(\la), \pr_2(\la), \pr_3(\la)) = (4; 2, 3, 2).
\]
The length conservation identity is verified:
\[
\ell(\lambda) = 18 = 4 + 2(2 + 3 + 2) = \sn_3(\la)+2(\pr_1(\la)+ \pr_2(\la)+\pr_3(\la)).\]
\end{description}
\end{example}

We also need to prove the following well-definedness of this algorithm.
\begin{lemma}\label{lem:gap_preservation}
Let $t \ge 1$. In the algorithm of Definition \ref{def:6-1}, if $\lambda^{(t-1)}$ consists of strictly positive integers whose adjacent parts 
differ by at least $t$, then $\lambda^{(t)}$ consists of strictly positive integers whose adjacent parts differ by at least $t+1$. Consequently, for any $\lambda \in \sD$ and any finite $m \ge 0$, the algorithm is deterministic and strictly well-defined.
\end{lemma}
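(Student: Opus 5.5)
We argue by induction on the stage index: we show that one stage maps a partition with gaps $\ge t$ to one with gaps $\ge t+1$. Fix $t\ge1$ and let $\la^{(t-1)}=(d_1<\dots<d_n)$ with $d_{h+1}-d_h\ge t$. The decomposition into maximal difference-$t$ runs is unambiguous: every adjacent gap is either exactly $t$ or at least $t+1$, and the runs are the blocks cut at the gaps $\ge t+1$. Left-to-right pairing inside each run is also deterministic. So stage $t$ is well defined, and it remains to control the shifted survivors.

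Let $a<a'$ be two survivors that are consecutive among the survivors. Since $R_t(a)$ counts peeled pairs to the right of $a$, we have
\[
R_t(a)=R_t(a')+P,
\]
where $P$ is the number of pairs peeled strictly between $a$ and $a'$. Hence
\[
\tau_t(a')-\tau_t(a)=(a'-a)-2tP .
\]
The key step is the lower bound $a'-a\ge 2tP+t+1$. Between $a$ and $a'$ lie exactly the $2P$ parts of these pairs, so $a'-a$ is a sum of $2P+1$ consecutive gaps, each at least $t$.

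To get the extra $+1$, we show that at least one of these $2P+1$ gaps is at least $t+1$. Suppose instead that all of them equal $t$. Then $a$, the $2P$ peeled parts, and $a'$ lie in a single maximal run. Because $a$ survives, it must be the last element of its run, and this contradicts $a'$ lying in the same run to its right. Hence $a'-a\ge (2P+1)t+1$, which gives $\tau_t(a')-\tau_t(a)\ge t+1$. In particular $\tau_t$ is strictly increasing on the survivors, so the survivors do not change order and $\la^{(t)}$ is strict. I expect this run-structure argument (a survivor is always the final element of an odd-length run) to be the crux of the proof.

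Positivity is immediate, since $\tau_t(a)\ge a\ge1$. Induction then completes the proof. A strict partition $\la\in\sD$ has gaps $\ge1$, which is the hypothesis for $t=1$. Each stage increases the minimal gap guarantee by one, so all $m$ stages are defined. The case $m=0$ is trivial, and an empty intermediate partition causes no problem.
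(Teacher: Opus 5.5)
Your proof is correct and follows essentially the same route as the paper: take consecutive survivors, use $R_t(a)-R_t(a')=P$, and get $a'-a\ge 2tP+t+1$ from the fact that a survivor is the terminal element of its maximal difference-$t$ run. The paper states directly that the first gap after $a$ is at least $t+1$, which is exactly what your contradiction argument establishes.
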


\begin{proof}
Positivity is immediate: for any surviving part $a \ge 1$, since $R_t(a) \ge 0$, we have 
$\tau_t(a) = a + 2t R_t(a) \ge a \ge 1$.

If $\lambda^{(t)}$ contains zero or one part, the gap condition holds vacuously. 
Assume $\lambda^{(t)}$ contains at least two parts, and let $a < b$ be two consecutive survivors in $\lambda^{(t-1)}$. Every surviving part is the terminal element of an odd-length maximal difference-$t$ sequence. 
Because a maximal sequence contains at most one terminal part, $a$ and $b$ necessarily belong to distinct sequences.

Let $r$ denote the number of pairs peeled at stage $t$ that lie strictly between $a$ and $b$. 
Then exactly $2r$ parts of $\lambda^{(t-1)}$ lie strictly between $a$ and $b$. 
Tracing adjacent parts in $\lambda^{(t-1)}$ from $a$ to $b$ traverses $2r+1$ consecutive steps:
\[
a = w_0 < w_1 < w_2 < \dots < w_{2r} < w_{2r+1} = b.
\]
By hypothesis, $w_{j+1} - w_j \ge t$ for all $0 \le j \le 2r$. 
Moreover, because $a = w_0$ is the terminal part of a maximal difference-$t$ sequence, the very first step $w_1 - w_0$ crosses the boundary of that sequence, which implies $w_1 - w_0 \ge t+1$. Therefore,
\begin{equation}
\label{eq:raw_gap_bound}
b - a = (w_1 - w_0) + \sum_{j=1}^{2r} (w_{j+1} - w_j) \ge  2tr + t + 1.
\end{equation}
This bound holds for all $r \ge 0$.

Next, by definition of the right-pair counting function $R_t(\cdot)$, any peeled pair strictly 
to the right of $a$ that is not strictly to the right of $b$ must lie strictly between $a$ and $b$. 
Hence, $R_t(a) - R_t(b) = r$. Applying the shift rule~\eqref{eq:shift_rule}, we obtain
\begin{align*}
\tau_t(b) - \tau_t(a) 
&= \bigl(b + 2t R_t(b)\bigr) - \bigl(a + 2t R_t(a)\bigr) = (b - a) - 2tr.
\end{align*}
Substituting the lower bound~\eqref{eq:raw_gap_bound} yields
\[
\tau_t(b) - \tau_t(a) \ge (2tr + t + 1) - 2tr = t+1.
\]
Since the initial partition $\lambda^{(0)} = \lambda \in \sD$ satisfies $\lambda_{j+1} - \lambda_j \ge 1$, mathematical induction on $t$ guarantees that all parts of $\lambda^{(t)}$ differ by at least $t+1$. Strict positivity and strict monotonicity are preserved throughout, establishing well-definedness.
\end{proof}

\begin{theorem}\label{thm:length_conservation}
Let $\mathbf{k}=(k_1, k_2, ..., k_m)\in \bN_0^m$. For any $\lambda \in \sD$ and peeling depth $m \ge 0$, we have
\begin{equation}
\label{eq:length_identity}
\ell(\lambda) = \sn_m(\lambda) + 2\sum_{t=1}^m \pr_t(\lambda).
\end{equation}
Consequently, defining
\begin{equation}
\sD_{i, \mathbf{k}} = \bigl\{\lambda \in \sD : \sigma_m(\lambda) = (i; k_1, \dots, k_m)\text{ and }\ell(\la)=i+2\sum_{t=1}^mk_t\bigr\},
\end{equation}
the partition family $\{\sD_{i, \mathbf{k}}\}_{(i, \mathbf{k}) \in \mathbb{N}_0^{m+1}}$ forms a 
disjoint decomposition of $\sD$:
\begin{equation}
\sD = \bigcup_{(i, \mathbf{k} )\in \mathbb{N}_0^{m+1}} \sD_{i, \mathbf{k}}.
\end{equation}
\end{theorem}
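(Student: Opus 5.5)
The plan is to prove the length identity \eqref{eq:length_identity} by induction on the peeling depth $m$, tracking how many parts each stage removes. By Lemma \ref{lem:gap_preservation}, every stage $t\le m$ of Definition \ref{def:6-1} is well defined for $\la\in\sD$. The quantities $\la^{(t)}$, $\pr_t(\la)$ and $\sn_m(\la)$ are therefore unambiguous. When $m=0$, no stage is performed, so $\la^{(0)}=\la$ and $\sn_0(\la)=\ell(\la)$; the identity holds trivially.

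For the inductive step, it suffices to prove the one-stage relation
\[
\ell(\la^{(t-1)})=\ell(\la^{(t)})+2\pr_t(\la)\qquad(1\le t\le m).
\]
Telescoping this over $t=1,\dots,m$ gives $\ell(\la)=\ell(\la^{(m)})+2\sum_{t=1}^m\pr_t(\la)$, which is \eqref{eq:length_identity} since $\sn_m(\la)=\ell(\la^{(m)})$.

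The one-stage relation is a counting argument.

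\begin{itemize}
\item The maximal difference-$t$ sequences partition the parts of $\la^{(t-1)}$. Maximality determines these blocks uniquely; singletons count as blocks of length $1$.
\item A block of length $r$ contributes $\lfloor r/2\rfloor$ peeled pairs and $r-2\lfloor r/2\rfloor\in\{0,1\}$ survivors. So $r=2\cdot(\text{pairs})+(\text{survivors})$ holds block by block. Summing over blocks gives $\ell(\la^{(t-1)})=2\pr_t(\la)+\#\{\text{survivors}\}$.
\item The shift $\tau_t$ maps survivors to parts of $\la^{(t)}$. It remains to check that $\tau_t$ is injective, so that $\ell(\la^{(t)})$ equals the number of survivors. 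This follows from the proof of Lemma \ref{lem:gap_preservation}: two consecutive survivors $a<b$ satisfy $\tau_t(b)-\tau_t(a)\ge t+1>0$. Hence $\tau_t$ is strictly increasing on survivors and no two images merge.
\end{itemize}

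The only point needing care is this injectivity, since a collision would make $\la^{(t)}$ have fewer parts than there are survivors; Lemma \ref{lem:gap_preservation} already rules this out.

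For the decomposition, the algorithm is deterministic, so each $\la\in\sD$ has a unique signature $\si_m(\la)=(i;k_1,\dots,k_m)\in\bN_0^{m+1}$. By the identity just proved, $\ell(\la)=i+2\sum_t k_t$ holds automatically, so the length condition in the definition of $\sD_{i,\mathbf{k}}$ is redundant. Consequently, $\la$ lies in $\sD_{i,\mathbf{k}}$ exactly when $(i,\mathbf{k})=\si_m(\la)$. The sets $\sD_{i,\mathbf{k}}$ are thus the fibers of the map $\si_m$, which makes them pairwise disjoint with union $\sD$.
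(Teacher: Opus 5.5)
Your proposal is correct and follows the paper's proof. The paper likewise telescopes the one-stage relation $\ell(\lambda^{(t)})=\ell(\lambda^{(t-1)})-2\pr_t(\lambda)$, using that each peeled pair removes two parts and that $\tau_t$ is a strictly order-preserving injection on survivors, and then reads the decomposition off the determinism of $\sigma_m$. Your version makes two points explicit that the paper leaves implicit: injectivity of $\tau_t$ comes from the gap bound in the proof of Lemma \ref{lem:gap_preservation}, and the length condition in $\sD_{i,\mathbf{k}}$ is redundant, so these sets are exactly the fibers of $\sigma_m$ and therefore also cover $\sD$.
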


\begin{proof}
At stage $t$, each of the $\pr_t(\lambda)$ peeled pairs removes exactly two parts. 
The shift $\tau_t$ is a strictly order-preserving injection on the surviving parts, so it preserves their count. 
Thus, $\ell(\lambda^{(t)}) = \ell(\lambda^{(t-1)}) - 2\pr_t(\lambda)$. 
Summing this recurrence from $t=1$ to $m$ gives
\[
\ell(\lambda^{(m)}) = \ell(\lambda^{(0)}) - 2\sum_{t=1}^m \pr_t(\lambda).
\]
Substituting $\ell(\lambda^{(0)}) = \ell(\lambda)$ and $\ell(\lambda^{(m)}) = \sn_m(\lambda)$ yields~\eqref{eq:length_identity}. 
The disjointness of the sets follows from the uniqueness and determinism of $\sigma_m(\lambda)$.
\end{proof}

Now for $(i, \mathbf{k})\in\bN_0^{m+1}$ with $m\geq 0$ and $K_t=\sum_{s=1}^{t-1}2sk_s$, we define the partition
\begin{align}
\beta^{(i, \mathbf{k})}=\beta^{(k_1)}\oplus\beta^{(k_2)}\oplus\cdots\oplus \beta^{(k_m)}\oplus\beta^{(i)},
\end{align}
where for $1\leq t\leq m$
\begin{align}
\beta^{(k_t)}:=[K_t+1, K_t+t+1]_t,\cdots ,[K_{t+1}-2t+1, K_{t+1}-t+1]_t,
\end{align}
and
\begin{align}
\beta^{(i)}:=(K_{m+1}+1), (K_{m+1}+m+2), \dots,(K_{m+1}+(m+1)i-m).
\end{align}
Next, we will show that \(\beta^{(i, \mathbf{k})}\) is indeed the base partition of \(\sD_{i, \mathbf{k}}\), that is, \(\beta^{(i, \mathbf{k})}\) is the partition in \(\sD_{i, \mathbf{k}}\) with the smallest weight.

\begin{theorem}\label{thm:base_partition_minimality}
Let $m \ge 0$, for $i \in \mathbb{N}_0$, and $\mathbf{k} = (k_1, \dots, k_m) \in \mathbb{N}_0^m$, we have $\beta^{(i, \mathbf{k})} \in \sD_{i, \mathbf{k}}$. Moreover, for every $\lambda \in \sD_{i, \mathbf{k}}$,
\begin{equation}\label{eq:componentwise_dominance}
\lambda_j \ge \beta^{(i, \mathbf{k})}_j \qquad \text{for all } 1 \le j \le \ell(\lambda).
\end{equation}
Consequently, $\beta^{(i, \mathbf{k})}$ is the unique partition of the smallest weight in $\sD_{i, \mathbf{k}}$.
\end{theorem}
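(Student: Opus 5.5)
The proof splits into two parts: first check that $\beta^{(i,\mathbf{k})}$ lies in $\sD_{i,\mathbf{k}}$ by running Definition~\ref{def:6-1} on it directly, then prove the componentwise dominance \eqref{eq:componentwise_dominance} by induction on the stage index. Uniqueness of the minimal-weight element then follows at once: a $\lambda\in\sD_{i,\mathbf{k}}$ with $|\lambda|=|\beta^{(i,\mathbf{k})}|$ and $\lambda_j\ge\beta^{(i,\mathbf{k})}_j$ for all $j$ must satisfy $\lambda=\beta^{(i,\mathbf{k})}$. Both partitions have length $i+2\sum_t k_t$ by Theorem~\ref{thm:length_conservation}.

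\emph{Membership.} First record the shape of $\beta^{(i,\mathbf{k})}$. The block $\beta^{(k_t)}$ consists of $2k_t$ parts starting at $K_t+1$ with all consecutive gaps equal to $t$ (inside a pair $[K_t+1,K_t+t+1]_t$ the gap is $t$, and the next pair starts at $K_t+2t+1$). It ends at $K_{t+1}-t+1$, and the next part of $\beta^{(i,\mathbf{k})}$ is $K_{t+1}+1$, again at distance $t$. Every later gap is at least $t+1$, since later blocks have internal gaps $s\ge t+1$ and the singleton tail has gaps $m+1$. I will prove by induction on $t$ that $\lambda^{(t-1)}$ for $\lambda=\beta^{(i,\mathbf{k})}$ is exactly $\beta^{(k_t)}\oplus\cdots\oplus\beta^{(k_m)}\oplus\beta^{(i)}$. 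At stage $t$ the block $\beta^{(k_t)}$ together with the following part forms a maximal difference-$t$ sequence of odd length $2k_t+1$ (or the block alone if nothing follows). Left-to-right pairing peels exactly the $k_t$ pairs of $\beta^{(k_t)}$. Every survivor lies to the right of all peeled pairs, so $R_t(a)=0$ and $\tau_t$ is the identity. All other maximal difference-$t$ sequences have length $1$. This gives $\pr_t=k_t$ and the claimed form of $\lambda^{(t)}$. After stage $m$ the $i$ parts of $\beta^{(i)}$ remain, so $\sigma_m(\beta^{(i,\mathbf{k})})=(i;k_1,\dots,k_m)$. The degenerate cases $k_t=0$ and $i=0$ need only a remark.

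\emph{Dominance.} For the lower bound I will prove a stronger statement by downward induction on the starting stage. Let $\mu$ be any strict partition with adjacent gaps $\ge t$, and suppose stages $t,\dots,m$ applied to $\mu$ produce $\pr_s=k_s$ for $t\le s\le m$ and $i$ singletons. Then $\mu$ dominates componentwise the partition $\beta_t:=\beta^{(k_t)}\oplus\cdots\oplus\beta^{(k_m)}\oplus\beta^{(i)}$, translated so that it starts at $1$. The case $t=m+1$ is the statement that a partition with gaps $\ge m+1$ dominates $1,m+2,2m+3,\dots$, which is immediate. For the inductive step I will pass from $\mu$ to $\mu'$, the result of stage $t$; it has gaps $\ge t+1$ by Lemma~\ref{lem:gap_preservation} and so falls under the induction hypothesis. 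I then need to lift a dominance bound on $\mu'$ to one on $\mu$. The plan is to compare counting functions: dominance is equivalent to $\#\{j:\mu_j\le x\}\le\#\{j:(\beta_t)_j\le x\}$ for every $x$. I will bound $\#\{j:\mu_j\le x\}$ by splitting it into peeled parts and survivors below $x$. Peeled pairs below $x$ consume gap length at least $2t$ each (gap $t$ inside the pair plus gap $\ge t$ to the next part). Each survivor $a$ below $x$ is pushed up by $2tR_t(a)$ in $\mu'$. These two effects should convert into the statement that the first $2k_t$ parts of $\beta_t$ have gaps exactly $t$ and that the tail is the translate of $\beta_{t+1}$ by $2tk_t$.

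\emph{Main obstacle.} The hard step is the lifting argument. Peeled pairs and survivors of $\mu$ can interleave, whereas in the base partition all peeled pairs come first. The shift $\tau_t(a)=a+2tR_t(a)$ is precisely what records the pairs lying to the right of a survivor, so the inequality should come from an exchange argument. Moving a peeled pair leftward past a survivor can only lower weights componentwise and leaves the counts unchanged, and after all such moves we reach the configuration with every pair on the left. Concretely, I would first show that $\mu_j\ge$ (the value obtained by placing the $2k_t$ peeled parts as $1,1+t,\dots$ and then the survivors raised to $\mu'$), using $b-a\ge 2tr+t+1$ from the proof of Lemma~\ref{lem:gap_preservation}, and only then apply the induction hypothesis to $\mu'$.
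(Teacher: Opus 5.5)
Your membership argument matches the paper's, and the uniqueness step is fine. The gap is in the dominance part. There the lifting step is only planned, and the concrete version in your last paragraph does not close.

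That version amounts to two facts: $\mu_j\ge 1+(j-1)t$ for $j\le 2k_t$, and $\mu_{2k_t+h}\ge\mu'_h$. The second is exactly the paper's ``survivor dominance'', and it needs only the gaps $\ge t$, not $b-a\ge 2tr+t+1$. If $q_h$ peeled pairs precede the $h$-th survivor, it sits at index $p_h=h+2q_h\le 2k_t+h$, so $\mu_{2k_t+h}\ge\mu_{p_h}+2t(k_t-q_h)=\mu'_h$.

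The trouble is that your induction hypothesis is normalized to start at $1$. Applied to $\mu'$ it yields only $\mu'_h\ge(\beta_{t+1})_h-K_{t+1}$, while the target at index $2k_t+h$ is $(\beta_{t+1})_h-K_{t+1}+2tk_t$. So the offset $2tk_t$ (your ``tail is the translate by $2tk_t$'') is never produced. Your remark that survivors are pushed up by $2tR_t(a)$ points at it but is never turned into an inequality.

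The missing ingredient is the paper's second single-stage bound, $\mu'_1\ge\mu_1+2tk_t$. It holds because the first survivor is $\mu_{1+2q_1}\ge\mu_1+2tq_1$ and is shifted by $2t(k_t-q_1)$. To use it inside your downward induction, the hypothesis must also depend on the first part: you need $\nu_h\ge(\beta_{t+1})_h-K_{t+1}+\nu_1-1$. This follows from your normalized form only via translation-equivariance of the peeling, which is true but must be stated.

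The paper avoids this by running forward instead. It carries the absolute bound $\lambda^{(t-1)}_1\ge K_t+1$ through the stages and chains $\lambda_{2\sum_{s<t}k_s+j}\ge\lambda^{(1)}_{2\sum_{1<s<t}k_s+j}\ge\cdots\ge\lambda^{(t-1)}_j\ge K_t+1+(j-1)t$.

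Once these two inequalities are in hand, the counting-function reformulation and the exchange argument are unnecessary. The exchange argument is also not verified as described. Moving a pair leftward past a survivor lowers that survivor's $R_t$, hence changes its image under $\tau_t$, and nothing shows the later-stage signature is preserved.
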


\begin{proof}
We first establish two fundamental lower bounds for a single peeling stage. 
Let $x = (x_1, \dots, x_{n+2k})$ be a partition whose adjacent parts differ by at least $t$. 
Suppose that stage $t$ peels $k$ pairs and yields the partition $y = (y_1, \dots, y_n)$ 
under the dynamic shift rule~\eqref{eq:shift_rule}.
 Suppose \(y\neq\ep\) (if \(y=\ep\), this indicates that the pairs have been completely peeled).
Let $p_1 < p_2 < \dots < p_n$ be the indices in $x$ of the survivors that produce $y_1, \dots, y_n$. 
If $q_h$ peeled pairs lie strictly to the left of $x_{p_h}$, then exactly $h-1$ survivors and 
$2q_h$ peeled parts precede $x_{p_h}$, so $p_h = h + 2q_h$. 
The remaining $k - q_h$ peeled pairs lie strictly to its right, giving $R_t(x_{p_h}) = k - q_h$. 
By~\eqref{eq:shift_rule},
\[
y_h = x_{p_h} + 2t(k - q_h) = x_{h+2q_h} + 2t(k - q_h).
\]
Because adjacent parts of $x$ differ by at least $t$, we have $x_v - x_u \ge (v - u)t$ for all $u < v$. 
This yields two direct consequences:
\begin{itemize}
    \item \textbf{Minimum part increase:} When $n \ge 1$, setting $h = 1$ gives
    \begin{equation}\label{eq:stage_min_bound}
    y_1 = x_{1+2q_1} + 2t(k - q_1) \ge (x_1 + 2t q_1) + 2t(k - q_1) = x_1 + 2tk.
    \end{equation}
    \item \textbf{Survivor dominance:} For each $1 \le h \le n$, since $q_h \le k$, we have $p_h \le 2k + h$. Thus
    \begin{equation}\label{eq:stage_survivor_bound}
    x_{2k+h} \ge x_{p_h} + ((2k + h) - p_h)t = x_{p_h} + 2t(k - q_h) = y_h.
    \end{equation}
\end{itemize}

Now let $\lambda \in \sD_{i, \mathbf{k}}$, and consider the sequence of intermediate partitions 
$\lambda^{(0)} = \lambda, \lambda^{(1)}, \dots, \lambda^{(m)}$ generated by the algorithm. 
Since $\lambda^{(0)}_1 \ge 1 = K_1 + 1$, successive application of~\eqref{eq:stage_min_bound} gives
\[
\lambda^{(t-1)}_1 \ge 1 + \sum_{s=1}^{t-1} 2sk_s = K_t + 1 \qquad (1 \le t \le m+1).
\]
By Lemma~\ref{lem:gap_preservation}, the parts of $\lambda^{(t-1)}$ differ by at least $t$. 
Therefore, its first $2k_t$ parts satisfy
\begin{equation}\label{eq:block_lower_bound}
\lambda^{(t-1)}_j \ge \lambda^{(t-1)}_1 + (j - 1)t \ge K_t + 1 + (j - 1)t = \beta^{(k_t)}_j \qquad (1 \le j \le 2k_t).
\end{equation}
Furthermore, applying~\eqref{eq:stage_survivor_bound} repeatedly across stages $1, 2, \dots, t-1$ shows that 
for any $1 \le j \le 2k_t$,
\[
\lambda_{2\sum_{s=1}^{t-1} k_s + j} \ge \lambda^{(1)}_{2\sum_{s=2}^{t-1} k_s + j} \ge \dots \ge \lambda^{(t-1)}_j \ge \beta^{(k_t)}_j.
\]
Similarly, for the final $i$ singleton parts, Lemma~\ref{lem:gap_preservation} ensures that adjacent parts 
of $\lambda^{(m)}$ differ by at least $m+1$, so for every $1 \le h \le i$,
\[
\lambda_{2\sum_{s=1}^m k_s + h} \ge \dots \ge \lambda^{(m)}_h \ge \lambda^{(m)}_1 + (h - 1)(m + 1) \ge K_{m+1} + 1 + (h - 1)(m + 1) = \beta^{(i)}_h.
\]
Combining these bounds establishes the componentwise dominance $\lambda_j \ge \beta^{(i, \mathbf{k})}_j$ for all $1 \le j \le \ell(\lambda)$.

To see that $\beta^{(i, \mathbf{k})} \in \sD_{i, \mathbf{k}}$, observe that at the start of stage $t$, 
the unpeeled parts consist of $\beta^{(k_t)} \oplus \dots \oplus \beta^{(k_m)} \oplus \beta^{(i)}$. 
If $\sum_{s=t+1}^mk_s+i>0$, then the block $\beta^{(k_t)}$ has $2k_t$ parts with successive difference $t$, and the difference from its 
last part $K_{t+1}-t+1$ to the next part $K_{t+1}+1$ is also $t$. 
All subsequent differences are at least $t+1$. 
Thus, the maximal difference-$t$ sequence starting at $K_t+1$ has odd length $2k_t+1$. 
Left-to-right pairing peels exactly the $k_t$ pairs of $\beta^{(k_t)}$ and leaves $K_{t+1}+1$ as a survivor. 
All peeled pairs lie strictly to the left of every survivor, so $R_t(a) = 0$ for all survivors, 
implying $\tau_t(a) = a$. 
If \(\sum_{s=t+1}^m k_s + i = 0\), then the partition ends with \(\beta^{(k_t)}\). This sequence has difference \(t\) and length exactly \(2k_t\). Left-to-right pairing completely peels the \(k_t\) pairs, leaving no survivor.
By induction on $t$, $\mathrm{pr}_t(\beta^{(i, \mathbf{k})}) = k_t$ for each $t$, and $\mathrm{sn}_m(\beta^{(i, \mathbf{k})}) = i$, 
confirming $\beta^{(i, \mathbf{k})} \in \sD_{i, \mathbf{k}}$.

Finally, summing~\eqref{eq:componentwise_dominance} over all $j$ yields $|\lambda| \ge |\beta^{(i, \mathbf{k})}|$. 
Equality holds if and only if $\lambda_j = \beta^{(i, \mathbf{k})}_j$ for all $j$, which uniquely identifies 
$\lambda = \beta^{(i, \mathbf{k})}$.
\end{proof}

\subsection{A refined bijection} 
In this subsection, we will prove the following refined bijection with multiple parameters, thereby completing the combinatorial proof of Theorem \ref{thm:8}. Recall that $\sP_n=\{\la\in \sP: \ell(\la)\leq n\}$ and $\sE_n=\{\la\in \sE: \ell(\la)\leq n\}$ where $\sE$ is the set of partitions into even parts.

\begin{lemma}\label{lem:sec6-varphiVI}
For $m\geq 0$, there exists a bijection
\begin{align*}
\varphi^{\VII}: \{\beta^{(i, \mathbf{k})}\}\times \sE_{k_1}\times \cdots \times \sE_{k_m}\times \sP_i&\ri\sD_{i, \mathbf{k}}\\
(\beta^{(i, \mathbf{k})}, \ga^1, ..., \ga^m, \ga^{m+1})&\mapsto \la,
\end{align*}
such that $|\beta^{(i, \mathbf{k})}|+\sum_{t=1}^{m+1}|\ga^t|=|\la|$, $\ell(\beta^{(i, \mathbf{k})})=\ell(\la)$, and $\si_m(\beta^{(i, \mathbf{k})})=\si_m(\la)$.\end{lemma}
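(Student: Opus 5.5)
We prove Lemma~\ref{lem:sec6-varphiVI} by decomposing the peeling algorithm of Definition~\ref{def:6-1} stage by stage. We build $\varphi^{\VII}$ as a composite of $m+1$ ``base $+$ increments'' insertions, processed in reverse order $t=m+1,m,\dots,1$, in the spirit of Lemma~\ref{lem:sec2-psiII}.

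\emph{Singletons.} The $m+1$ layer inserts $\ga^{m+1}\in\sP_i$ into the $i$ singleton parts of $\beta^{(i)}$. We add the $h$-th part of $\ga^{m+1}$ (padded with zeros) to the $h$-th singleton. Since the increments are nondecreasing, the gaps $\geq m+1$ of Lemma~\ref{lem:gap_preservation} persist, and the first part stays $\geq K_{m+1}+1$.

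\emph{One peeling stage.} The core is a single-stage lemma. Fix $t$ and $k=k_t$. Let $y$ have adjacent gaps $\geq t+1$ and first part $\geq K_t+2tk+1$. We claim there is a weight-compatible bijection between pairs $(y,\ga^t)\in\{y\}\times\sE_{k}$ and partitions $x$ with the following properties: adjacent gaps of $x$ are $\geq t$, $x_1\geq K_t+1$, stage $t$ applied to $x$ peels exactly $k$ pairs, and it returns $y$. The weight relation is
\[
|x| = |y| + |\beta^{(k_t)}| - 2tk(\text{shift correction}) + |\ga^t|,
\]
with the correction matching exactly the $2tR_t$ terms in~\eqref{eq:shift_rule}. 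The minimal configuration is $\beta^{(k_t)}$ placed to the left of all survivors, and then $R_t\equiv 0$, as in the proof of Theorem~\ref{thm:base_partition_minimality}.

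\emph{Forward moves for pairs.} We process the $k$ pairs from the largest to the smallest, giving the $h$-th pair $\ga^t_h/2$ forward moves. An ordinary forward move replaces $[c,c+t]_t$ by $[c+1,c+t+1]_t$, adding $2$ to the weight.

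\emph{Adjustment.} Suppose the next part on the right is a survivor $a$ that the pair would collide with, or would otherwise join the same maximal difference-$t$ sequence so that the left-to-right pairing changes. Then the pair jumps over $a$: the survivor $a$ is replaced by $a-2t$ placed to the left, and the pair moves to just beyond the old $a$. This is exactly the local move that decreases $R_t(a)$ by one, so $\tau_t(a)=a+2tR_t(a)$ is unchanged and $y=x^{(t)}$ is invariant. The weight again increases by exactly $2$.

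\emph{Backward moves.} The inverse uses backward moves and normalizations, restoring pairs in increasing order. Nondecreasing increments are recovered by the same ``invariant $X_r=r+a_r+C_r$'' argument used in Lemma~\ref{lem:sec2-psiII}, now with steps of size $1$ per pair and jumps of size $2t$.

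\emph{Composition.} Composing the stage maps for $t=m,\dots,1$ with the singleton layer gives $\varphi^{\VII}$. The signature $\si_m$ is preserved because each stage-$t$ map fixes $x^{(t)}$ and the count of peeled pairs, and because Lemma~\ref{lem:gap_preservation} propagates the gap hypotheses. The lower bound~\eqref{eq:stage_min_bound} guarantees that the first-part condition $y_1\geq K_t+2tk+1$ is met at every level. Weight additivity follows by summing the per-move increments, and $\ell$ is preserved since no parts are created or destroyed.

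\emph{Main obstacle.} The hard step is showing that the adjustment keeps the left-to-right pairing rule intact. A moving pair may come within distance exactly $t$ of another pair or of a survivor. The maximal difference-$t$ sequence would then merge, and the greedy pairing could re-pair the parts differently or change which element survives. We must therefore define the trigger so that it fires precisely when the pair would otherwise lengthen a sequence on its right. We then check, as in the collision analysis of Lemma~\ref{lem:sec2-psiII}, that a pair with fewer remaining moves never reaches the previously placed larger pair, using $\ga^t_h\le\ga^t_{h+1}$. The first thing to check is the odd-length case, where a pair attaches to the left end of a survivor's sequence. The claim to verify there is that treating this as ``jump over the survivor'' and ``do not join'' are the same thing under the shift rule.
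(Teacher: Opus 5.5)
Your overall route is the paper's. You translate the singletons, then run stages $t=m,\dots,1$ with pairs processed right to left, use a leapfrog move that keeps the shifted survivor $\tau_t$ fixed, and invert by normalizations. Packaging this as a composition of single-stage fiber bijections $(y,\ga^t)\mapsto x$ is sound, and arguably cleaner than the paper's global scheduling. Here $x$ ranges over partitions with gaps $\ge t$, $x_1\ge K_t+1$, $\pr_t=k_t$ and stage-$t$ output $y$; after stage $t$ is processed, the paper's configuration is exactly $\beta^{(k_1)}\oplus\cdots\oplus\beta^{(k_{t-1})}\oplus x$. The gap is the step you yourself call the main obstacle. The local rule is never pinned down, and the mechanics you do state for it are partly wrong.

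\emph{The obstacle.} It dissolves once you note that in any such $x$, every maximal difference-$t$ run consists of peeled pairs followed by at most one survivor. This holds because a survivor is terminal, so the gap after it is at least $t+1$. A pair moving right can only detach an all-pair run on its left, or become a prefix of the run on its right. Neither changes the greedy pairing, the set of survivors, or any $R_t$. So reaching distance exactly $t$ from the next part is harmless and should be an ordinary move, and no trigger for ``lengthening a sequence'' is needed. This also settles your odd-length worry. A ``jump'' over a survivor at distance $t+1$ with weight $+2$ produces the same three parts $c+1,c+t+1,c+2t+1$ as the free move, only with a labelling that contradicts the greedy pairing.

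\emph{The correct rule.} The only forced adjustment is a genuine collision. For the active pair $[c,c+t]_t$ with next part $w$, move freely if $w\ge c+2t+1$ (or there is no $w$). If $w=c+2t$, replace $(c,c+t,c+2t)$ by $(c,c+t+1,c+2t+1)$. This is also the only case in which your own description is consistent: requiring the crossed survivor $a$ to become $a-2t$, the pair to end at $a+1$, and the weight to rise by $2$ forces $a=c+2t$. After the move the survivor has one \emph{more} peeled pair on its right, so $R_t$ increases by one (you wrote ``decreases''). Then $\tau_t(c)=c+2t(R+1)=\tau_t(c+2t)$.

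\emph{Bookkeeping fixes.} The single-stage weight relation is $|x|=|y|+|\beta^{(k_t)}|+|\ga^t|$, with no $-2tk$ term; check it at $\ga^t=\ep$, where $x=\beta^{(k_t)}\oplus y$. Also, an adjustment advances the pair's left end by $t+1$, not $2t$. The analogue of $X_r=r+a_r+C_r$ is therefore $X_r=K_t+1+2(r-1)t+\ga^t_r/2+tC_r$. Combine this with $X_{r+1}-X_r\ge 2t+h(t+1)$, where $h$ is the number of survivors between consecutive pairs. This yields $\ga^t_{r+1}/2-\ga^t_r/2\ge h\ge 0$, which is the monotonicity you need for the inverse.
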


Note that $|\beta^{(i, \mathbf{k})}|=M(i, \mathbf{k})$ defined in \eqref{M}. Therefore, once the proof of Lemma \ref{lem:sec6-varphiVI} is complete, we have
\begin{align}
\sum_{\la\in \sD_{i, \mathbf{k}}}\left(\prod_{t=1}^mx_t^{\pr_t(\la)}\right)y^{\ell(\la)}q^{|\la|}&=\sum_{(\beta^{(i, \mathbf{k})}, \ga^1, ..., \ga^m, \ga^{m+1})}\left(\prod_{t=1}^mx_t^{\pr_t(\beta^{(i, \mathbf{k})})}\right)y^{\ell(\beta^{(i, \mathbf{k})})}q^{|\beta^{(i, \mathbf{k})}|+\sum_{t=1}^{m+1}|\ga^t|}\nonumber\\
&=\frac{\left(\prod_{t=1}^mx_t^{k_t}\right)y^{i+\sum_{t=1}^m2k_t}q^{M(i, k_1, ..., k_m)}}{(q; q)_i(q^2; q^2)_{k_1}(q^2; q^2)_{k_2}\cdots (q^2; q^2)_{k_m}},\label{sec6-9}
\end{align}
where \((\beta^{(i, \mathbf{k})}, \ga^1, ..., \ga^m, \ga^{m+1})\) ranges over \(\{\beta^{(i, \mathbf{k})}\}\times \sE_{k_1}\times \cdots \times \sE_{k_m}\times \sP_i\). Then summing both sides of \eqref{sec6-9} over all \((i, \mathbf{k})\in \bN_0^{m+1}\) gives
\begin{align}
\sum_{\la\in \sD}\left(\prod_{t=1}^mx_t^{\pr_t(\la)}\right)y^{\ell(\la)}q^{|\la|}=\sum_{i, k_1, \cdots, k_m\geq 0}\frac{\left(\prod_{t=1}^mx_t^{k_t}\right)y^{i+\sum_{t=1}^m2k_t}q^{M(i, k_1, ..., k_m)}}{(q; q)_i(q^2; q^2)_{k_1}(q^2; q^2)_{k_2}\cdots (q^2; q^2)_{k_m}}.
\end{align}
Finally, setting \(x_t=1\) for all \(1\leq t\leq m\) and \(y\mapsto x\), we obtain Theorem \ref{thm:8}.

\begin{proof}[Proof of Lemma \ref{lem:sec6-varphiVI}]
We construct the forward mapping $\varphi^{\mathrm{VII}}$ and its inverse $(\varphi^{\mathrm{VII}})^{-1}$ via an explicit five-step procedure. Throughout the construction, incremental partitions are written with their components weakly increasing:
\[
\gamma^t = \gamma^t_1 + \gamma^t_2 + \dots + \gamma^t_{k_t} \in \sE_{k_t} \quad (1 \le t \le m), \qquad \gamma^{m+1} = \gamma^{m+1}_1 + \gamma^{m+1}_2 + \dots + \gamma^{m+1}_i \in \sP_i,
\]
where each $\gamma^t_r \in 2\mathbb{N}_0$ is a non-negative even integer, and each $\gamma^{m+1}_j \in \mathbb{N}_0$ is a non-negative integer.

\noindent\textbf{Base partition structure and parameter decomposition.}
Recall that $K_t = \sum_{s=1}^{t-1} 2s k_s$ for $1 \le t \le m+1$. The base partition 
\[
\beta^{(i, \mathbf{k})} = \beta^{(k_1)} \oplus \beta^{(k_2)} \oplus \dots \oplus \beta^{(k_m)} \oplus \beta^{(i)}
\]
consists of $k_t$ difference-$t$ pairs at stage $t$, denoted in increasing order by
\[
P_{t, 1} < P_{t, 2} < \dots < P_{t, k_t}, \qquad \text{where } P_{t, r} = [a_{t, r}, a_{t, r}+t]_t \text{ with } a_{t, r} = K_t + 1 + 2(r-1)t,
\]
and $i$ singletons denoted in increasing order by
\[
s_1 < s_2 < \dots < s_i, \qquad \text{where } s_j = K_{m+1} + 1 + (j-1)(m+1).
\]
By Theorem~\ref{thm:base_partition_minimality}, $\beta^{(i, \mathbf{k})} \in \sD_{i, \mathbf{k}}$, and peeling stage $t$ peels precisely the $k_t$ pairs $P_{t, 1}, \dots, P_{t, k_t}$ leaving all survivors strictly to their right.

Each singleton $s_j$ must absorb weight $\gamma^{m+1}_j$. Each pair $P_{t, r}$ must absorb an even weight $\gamma^t_r$; we decompose this into $\gamma^t_r / 2$ forward moves, each of which increases the sum of the partition by exactly $2$.

\noindent
\textbf{Step 2: Forward moves and adjustments.}
We define two types of elementary operations on parts of the partition:
\begin{description}
    \item[Singleton translation] For each $1 \le j \le i$, the singleton $s_j$ is shifted forward by $\gamma^{m+1}_j$:
    \[
    s_j \longmapsto s'_j = s_j + \gamma^{m+1}_j.
    \]
    Since $s_{j+1} - s_j = m+1$ and $\gamma^{m+1}_{j+1} \ge \gamma^{m+1}_j$, we have
    \[
    s'_{j+1} - s'_j = (s_{j+1} - s_j) + (\gamma^{m+1}_{j+1} - \gamma^{m+1}_j) \ge m+1.
    \]
    Thus, singletons remain strictly increasing with adjacent gaps at least $m+1$.
    
    \item[Pair forward moves and adjustments] Let $[a, a+t]_t$ be an active difference-$t$ pair at stage $t$. Let $w$ denote the immediately succeeding part in the current partition (if such a part exists). A forward step on $[a, a+t]_t$ is governed by the following dichotomy:
    \begin{itemize}
        \item \emph{Case I (Free forward move):} If $w \ge a+2t+1$, or if $[a, a+t]_t$ is the terminal block of the partition, the pair advances freely by $+1$ on each component:
        \[
        [a, a+t]_t \longmapsto [a+1, a+t+1]_t.
        \]
        The sum of parts increases by $2$. The adjacent difference to $w$ becomes $w - (a+t+1) \ge (a+2t+1) - (a+t+1) = t$, preserving the minimum gap $t$.
        
        \item \emph{Case II (Adjustment):} If $w = a+2t$, a forward move would produce $[a+1, a+t+1]_t$ followed by $a+2t$, which would create an adjacent difference of $(a+2t) - (a+t+1) = t-1 < t$. In this critical case, we perform the adjustment:
        \[
        [a, a+t]_t, \, a+2t \longmapsto a, \, [a+t+1, a+2t+1]_t.
        \]
        The pair leapfrogs over $w = a+2t$. The parts transform from $(a, a+t, a+2t)$ to $(a, a+t+1, a+2t+1)$. The sum of the parts increases from $3a+3t$ to $3a+3t+2$, adding exactly $2$ to the weight. The adjacent gaps become $(a+t+1) - a = t+1 > t$ and $(a+2t+1) - (a+t+1) = t$, so all adjacent differences remain at least $t$.
    \end{itemize}
\end{description}

\noindent
\textbf{Step 3: Global scheduling order.}
To ensure that moves never interfere with one another or create spurious differences at earlier peeling stages, operations are scheduled in the following strict order:
\begin{enumerate}
    \item \textbf{Stage priority:} We first shift all singletons (stage $m+1$). Next, we process pair stages in strictly descending order: $t = m, \, m-1, \, \dots, \, 1$.
    \item \textbf{Right-to-left within stage $t$:} Within stage $t$, pairs are processed strictly from right to left: $r = k_t, \, k_t-1, \, \dots, \, 1$. For pair $P_{t, r}$, we perform exactly $\gamma^t_r / 2$ elementary steps (each being either a free move or an adjustment as dictated by the neighbor $w$).
\end{enumerate}
Because $r$ runs from $k_t$ down to $1$, the space to the right of $P_{t, r}$ is cleared before $P_{t, r}$ begins moving. If $P_{t, r+1}$ or singletons originally abutted $P_{t, r}$, they have already advanced, preventing cascading collisions.

\noindent
\textbf{Step 4: Statistic invariance.}
We now verify that every elementary operation preserves the signature $\sigma_m(\lambda) = (i; k_1, \dots, k_m)$.
\begin{description}
    \item[Invariance at earlier stages $s < t$]
    In $\beta^{(i, \mathbf{k})}$, all parts in blocks $s > t$ have differences at least $t$. Because stages are executed in descending order, all adjacent differences among parts participating in stage $t$ are at least $t$. A free move keeps all differences at least $t$. An adjustment replaces $(a, a+t, a+2t)$ with differences $(t, t)$ by $(a, a+t+1, a+2t+1)$ with differences $(t+1, t)$. Since $t \ge s+1$, every adjacent difference is strictly greater than $s$. Thus, no difference-$s$ pair is ever formed during stage $t$.

    \item[Invariance at stage $t$ and preservation of dynamic shift]
    Under a free move $[a, a+t]_t \mapsto [a+1, a+t+1]_t$, the pair remains a difference-$t$ pair and is peeled at stage $t$.
    Under an adjustment, $(a, a+t, a+2t)$ is replaced by $(a, a+t+1, a+2t+1)$. In the original configuration, left-to-right pairing peels $[a, a+t]_t$, leaving $a+2t$ as a survivor. In the adjusted configuration, the difference between $a$ and $a+t+1$ is $t+1 > t$, so $a$ is isolated and survives, while $[a+t+1, a+2t+1]_t$ has difference $t$ and is peeled.
    
    We compare the shifted surviving values under the dynamic shift rule~\eqref{eq:shift_rule}: $\tau_t(x) = x + 2t R_t(x)$.
    Let $R \ge 0$ be the number of peeled pairs strictly to the right of $a+2t$.
    \begin{itemize}
        \item In $(a, a+t, a+2t)$, the survivor is $a+2t$. Since all $R$ pairs lie strictly to its right, $R_t(a+2t) = R$. Its shifted image in $\lambda^{(t)}$ is:
        \[
        \tau_t(a+2t) = a+2t + 2t R_t(a+2t) = a + 2t + 2tR.
        \]
        \item In $(a, a+t+1, a+2t+1)$, the survivor is $a$. The newly formed peeled pair $[a+t+1, a+2t+1]_t$ lies strictly to the right of $a$, in addition to the $R$ pairs further to the right. Thus $R_t(a) = R + 1$. Its shifted image in $\lambda^{(t)}$ is:
        \[
        \tau_t(a) = a + 2t R_t(a) = a + 2t(R + 1) = a + 2t + 2tR.
        \]
    \end{itemize}
    The surviving part mapped into $\lambda^{(t)}$ is identically $a + 2t + 2tR$ in both cases.
    
    \item[Invariance at subsequent stages $s > t$]
    Because the set of surviving values entering $\lambda^{(t)}$ is identical and their relative positions are unchanged, all subsequent stages $t+1, \dots, m$ produce identical pairings, pair counts $k_s$, and final singletons.
\end{description}
Therefore, $\sigma_m(\lambda) = \sigma_m(\beta^{(i, \mathbf{k})}) = (i; k_1, \dots, k_m)$ holds invariant throughout the algorithm.

\noindent
\textbf{Step 5: Canonical normalization and invertibility.}
We define the inverse map $(\varphi^{\mathrm{VII}})^{-1}: \sD_{i, \mathbf{k}} \to \{\beta^{(i, \mathbf{k})}\} \times \sE_{k_1} \times \dots \times \sE_{k_m} \times \sP_i$. Given $\lambda \in \sD_{i, \mathbf{k}}$, we reconstruct the parameter partitions and recover $\beta^{(i, \mathbf{k})}$ by reversing the forward moves stage by stage in ascending order $t = 1, 2, \dots, m$:
\begin{description}
    \item[Pair normalization at stage $t$] At stage $t$, the partition contains $k_t$ peeled difference-$t$ pairs. We process them from left to right: $r = 1, 2, \dots, k_t$. For a pair $[b, b+t]_t$, let $u$ be the immediately preceding part in the partition (if any).  
    \begin{itemize}
        \item \emph{Case I (Normalization):} If $u = b - t - 1$, the triple is $(b-t-1, b, b+t)$. We apply the inverse adjustment:
        \[
        b-t-1, \, [b, b+t]_t \longmapsto [b-t-1, b-1]_t, \, b+t-1.
        \]
        This decreases the sum of parts by $2$.
        \item \emph{Case II (Free backward move):} If $u \le b - t - 2$ or if no preceding part exists or $u$ is a component of a difference-$s$ pair for any $s\leq t$, we shift the pair backward by $-1$ on each element:
        \[
        [b, b+t]_t \longmapsto [b-1, b+t-1]_t.
        \]
        This decreases the sum of parts by $2$.
    \end{itemize}
    We repeatedly apply these elementary backward steps to pair $r$ until its first component reaches the canonical base position $a_{t, r} = K_t + 1 + 2(r-1)t$. The total number of backward steps performed on pair $r$ multiplied by $2$ uniquely determines $\gamma^t_r \in 2\mathbb{N}_0$. Processing from left to right guarantees $0 \le \gamma^t_1 \le \dots \le \gamma^t_{k_t}$, so $\gamma^t \in \sE_{k_t}$.
    
    \item[Singleton normalization] After all stages $t=1, \dots, m$ are normalized to the base blocks $\beta^{(k_1)} \oplus \dots \oplus \beta^{(k_m)}$, the remaining $i$ parts are the singletons $s'_1 < s'_2 < \dots < s'_i$. We define:
    \[
    \gamma^{m+1}_j = s'_j - s_j \qquad (1 \le j \le i).
    \]
    Since $s'_{j+1} - s'_j \ge m+1 = s_{j+1} - s_j$, we have $0 \le \gamma^{m+1}_1 \le \gamma^{m+1}_2 \le \dots \le \gamma^{m+1}_i$, yielding $\gamma^{m+1} \in \sP_i$.
\end{description}
Because each backward operation uniquely inverts the corresponding forward operation, $(\varphi^{\mathrm{VII}})^{-1}$ is deterministic, well-defined, and satisfies that $(\varphi^{\mathrm{VII}})^{-1}$ is the inverse map of $\varphi^{\mathrm{VII}}$. This proves that $\varphi^{\mathrm{VII}}$ is a weight-preserving, length-preserving bijection onto $\sD_{i, \mathbf{k}}$, as described in the lemma.
\end{proof}

Finally, we present an example to illustrate the bijection \(\varphi^{\VII}\) in Lemma \ref{lem:sec6-varphiVI}.

\begin{example}\label{ex:sec6-varphiVI}
Let $m=3$ and $(i;k_1,k_2,k_3)=(2;1,2,1)$. Choose
\[
\gamma^1=2,\qquad \gamma^2=2+8,\qquad \gamma^3=2,\qquad \gamma^4=1.
\]
The base partition, with its designated pairs indicated, is
\[
\beta^{(2;(1,2,1))} = ([1,2]_1, [3,5]_2, [7,9]_2, [11,14]_3, 17, 21).
\]
The singleton $21\mapsto 22$ since $\gamma^4=1$. At stage $3$, the difference-$3$ pair undergoes an adjustment:
\[
([1,2]_1,[3,5]_2,[7,9]_2,[11,14]_3,17,22)
\longmapsto
([1,2]_1,[3,5]_2,[7,9]_2,11,[15,18]_3,22).
\]
At stage $2$, we first process the rightmost difference-$2$ pair. Its four forward steps are
\begin{align*}
&([1,2]_1,[3,5]_2,[7,9]_2,11,15,18,22)\\
&\quad\longmapsto
 ([1,2]_1,[3,5]_2,7,[10,12]_2,15,18,22)\\
&\quad\longmapsto
 ([1,2]_1,[3,5]_2,7,[11,13]_2,15,18,22)\\
&\quad\longmapsto
 ([1,2]_1,[3,5]_2,7,11,[14,16]_2,18,22)\\
&\quad\longmapsto
 ([1,2]_1,[3,5]_2,7,11,14,[17,19]_2,22).
\end{align*}
The first, third, and fourth of these steps are adjustments. In particular, in the third and fourth steps, the moving difference-$2$ pair successively encounters the parts $15$ and $18$ resulting from the stage-$3$ pair, replacing each encountered part in turn via the adjustment rule.

Next, the remaining difference-$2$ pair and the difference-$1$ pair each undergo an adjustment:
\begin{align*}
&([1,2]_1,[3,5]_2,7,11,14,[17,19]_2,22)\\
&\quad\longmapsto
 ([1,2]_1,3,[6,8]_2,11,14,[17,19]_2,22)\\
&\quad\longmapsto
 (1,[3,4]_1,6,8,11,14,17,19,22).
\end{align*}
Consequently,
\[
\la:=\varphi^{\mathrm{VII}}\bigl(\beta^{(2;(1,2,1))}, 2, 2+8, 2, 1\bigr)
= (1,3,4,6,8,11,14,17,19,22)\in \sD_{2, 1, 2, 1}.
\]
The weight increases from $|\beta^{(2;(1,2,1))}|=90$ to $90+2+10+2+1=105$, matching the sum of the resulting partition. The verification of the inverse mapping is left to the reader.
\end{example}

\section{Conclusion}\label{sec:conclusion}

We have investigated several multi-sum Rogers-Ramanujan type identities through structural decompositions of strict partitions. The constructions established across the paper include weight-preserving bijections, parity-based stripping algorithms, and sign-reversing involutions. In particular, base+increments representations directly link the quadratic exponents of the series to minimal partition configurations and the $q$-shifted factorials in denominators to admissible increments. Two parameterized general families place various known and new specializations in a unified setting: one isolates a Rogers-Ramanujan partition from a strict partition, while the other organizes strict partitions according to multi-stage pairing signatures and yields a general $(m+1)$-fold multisum.

Although the explicit bijections take distinct forms according to the boundary conditions, they repeatedly exploit reversible local moves to rearrange parts, control structural partition statistics, and account exactly for changes in weight. A natural and compelling next step is to abstract these shared mechanisms into a unified, object-centred combinatorial framework rather than treating individual $q$-series identities in isolation. Such a theory would identify canonical base configurations, characterize compatible increment structures, and establish general criteria under which microscopic moves remain invertible and non-interfering. It should also naturally incorporate sign-reversing involutions whenever algebraic cancellation is required.

We anticipate that this object-centred perspective will provide partition-theoretic explanations for broader classes of Rogers-Ramanujan type multisums and help identify higher-rank Nahm-type identities that admit combinatorial models. Introducing refining parameters that track residue classes, pairing patterns, or transition dynamics may reveal deeper combinatorial symmetries. Beyond ad hoc sum-to-product evaluations, the overarching objective is to understand when and why divergent analytic identities emerge from the same underlying combinatorial geometry.








\end{document}